\newtheorem*{rep@theorem}{\rep@title}
\newcommand{\newreptheorem}[2]{%
\newenvironment{rep#1}[1]{%
 \def\rep@title{#2 \ref{##1}}%
 \begin{rep@theorem}}%
 {\end{rep@theorem}}}
\newtheorem{thm}{Theorem}
\newtheorem{cor}{Corollary}
\newtheorem{lem}{Lemma}
\theoremstyle{definition}
\newtheorem{defn}{Definition}
\theoremstyle{remark}
\newtheorem{rem}{Remark}
\title{Sums of seven octahedral numbers}
\author{Zarathustra Brady}
\begin{document}
\maketitle

\begin{abstract} We show that for a large class of cubic polynomials $f$, every sufficiently large number can be written as a sum of seven positive values of $f$. As a special case, we show that every number greater than $e^{10^7}$ is a sum of seven positive octahedral numbers, where an octahedral number is a number of the form $\frac{2x^3+x}{3}$, reducing an open problem due to Pollock to a finite computation.
\end{abstract}

\section{Introduction}

In a paper from 1843, Sir Frederick Pollock \cite{pollock} conjectured (among other things) that every number can be written as a sum of $5$ tetrahedral numbers, $7$ octahedral numbers, $9$ cubes, $13$ icosahedral numbers, or $21$ dodecahedral numbers. That every number can be written as a sum of $7$ octahedral numbers later came to be known as Pollock's octahedral number conjecture.

More generally, given a polynomial $f$ (with positive leading coefficient) taking integer values, such that $f$ takes at least two distinct values modulo every prime $p$, one can ask if there is a number $k$ such that every sufficiently large number can be written as a sum of $k$ positive values of $f$. After subtracting a constant from $f$, it is easy to see that such functions can be written in the form
\[
f(x) = a_n\binom{x}{n} + \cdots + a_1\binom{x}{1}
\]
with $(a_1, ..., a_n) = 1$ and $a_n > 0$.

For such a polynomial $f$, Mit$'$kin \cite{mitkin} defines $G_n(f)$ to be the smallest $k$ such that every sufficiently large number can be written as a sum of $k$ positive values of $f$. Mit$'$kin also defines $H_n(f)$ to be the smallest $k$ such that for every prime power $p^m$, every congruence class modulo $p^m$ can be written as a sum of $k$ values of $f$. It is clear that $H_n(f) \le G_n(f)$ for every $f$. Next, Mit$'$kin defines $G_n = \max_{f} G_n(f)$ and $H_n = \max_{f} H_n(f)$.

Building on results of Hua \cite{hua-waring}, Mit$'$kin \cite{mitkin} shows that for all $n$ we have $H_n = 2^n-2\left\{\frac{n}{2}\right\}$, and that for $n\ge 4$ we have $G_n = H_n$. When $n = 3$, the case of interest to us, Hua proved in 1935 that $G_3(f) \le 8$ when $f$ is odd in \cite{hua-odd}, and then in 1940 Hua extended this to all cubic polynomials $f$ in \cite{hua}, showing that $G_3 \le 8$. Mit$'$kin conjectures in \cite{mitkin} that the true value of $G_3$ is $7$. An example of a cubic polynomial $f$ with $H_3(f) = 7$ is the polynomial
\[
f(x) = \frac{2(x^3-x)}{3} + x^2,
\]
which only takes the values $0,1$ modulo $8$ (that $H_3(f) \le 7$ follows from Mit$'$kin's result).

In this paper, we show that for a large class of cubic polynomials $f$ we have $G_3(f) \le 7$. Precisely, we prove the following theorems.

\begin{repthm}{oddcubic} If $f$ is an odd cubic integer-valued polynomial taking at least two distinct values modulo every prime and not of the form $f(x) = x^3 + 6kx$, then every number $n$ greater than an effectively computable constant depending only on $f$ can be written as a sum of seven positive values of $f$.
\end{repthm}

\begin{repthm}{gencubic} Let $f(x) = a\frac{x^3-x}{6} + b\frac{x^2-x}{2} + cx$ with $a > 0$ and $\gcd(a,b,c) = 1$. Suppose there is a prime $p$ such that $f$ is surjective as a function on $\mathbb{Z}_p$, and such that $v_p(a) \le v_p(2b)$. Then every number $n$ greater than an effectively computable constant depending only on $f$ can be written as a sum of seven positive values of $f$.
\end{repthm}

The method is similar to that in Linnik's original proof of the seven cubes theorem \cite{linnik-cubes}. Like Linnik, we reduce the problem to solving a convenient congruence and representing a number by a diagonal ternary quadratic form. In order to explain the main idea, we assume that $f$ is an odd polynomial with integer coefficients and ignore complications due to the primes $2$ and $3$. If $f$ has leading coefficient $a$ we start with the identity
\begin{align*}
&f(\alpha q + x) + f(\alpha q - x) + f(\beta q + y) + f(\beta q - y) + f(\gamma q + z) + f(\gamma q - z) + f(u)\\
= &\ 6aq(\alpha x^2 + \beta y^2 + \gamma z^2) + f(u) + 2f(\alpha q) + 2f(\beta q) + 2f(\gamma q).
\end{align*}
We choose $\alpha < \beta < \gamma$ to be constants with no common factor, larger than a constant times $a$. Then $q$ is chosen such that $n$ is greater than $6f(\gamma q) + f(6aq)$ and such that $f$ is surjective modulo $6aq$. Next $u$ is chosen between $0$ and $6aq$ to solve the congruence
\[
n \equiv f(u) + 2f(\alpha q) + 2f(\beta q) + 2f(\gamma q) \pmod{6aq}.
\]
Note that this reduces to $n\equiv f(u)$ when we look modulo $q$. We are left with the problems of representing a number by the diagonal form $\alpha x^2 + \beta y^2 + \gamma z^2$, and verifying the inequalities $\alpha q > x, \beta q > y, \gamma q > z$. For the inequalities we will need $6a(\alpha q)^3$ to be large compared to $n$, so $q$ will need to be between two constant multiples of $n^{\frac{1}{3}}$.

When representing a number as a sum of seven cubes, it is convenient to take $q$ to be a prime $p$ satisfying $p\equiv 2\pmod{3}$. In the case of a general cubic polynomial $f$ there is typically only a finite set of primes modulo which $f$ is surjective, so we will instead take $q$ to be a power of a fixed prime $p$, depending on the function $f$. A complication occurs since now the set of convenient moduli $q$ is sparse on the logarithmic scale, so instead of using a single diagonal ternary quadratic form as in Linnik's argument for cubes, we must use a collection of several diagonal ternary forms to handle different ranges for the remainder of $\log(n)$ modulo $\log(p)$.

It is natural to wonder whether Watson's simplified proof of the seven cubes theorem \cite{watson-cubes} can be similarly generalized. The main idea of Watson's argument is to take advantage of the fact that if $u \equiv 0 \pmod{r}$ then $u^3 \equiv 0 \pmod{r^3}$. In order to generalize Watson's argument, we could try to take advantage of ramification: find a congruence class $t$ modulo a small prime $r$ such that $u \equiv t \pmod{r}$ implies $f(u) \equiv f(t) \pmod{r^2}$, and to try to represent $n$ by an expression such as
\[
f(qr^2 + sx) + f(qr^2 - sx) + f(qr^2 + sy) + f(qr^2 - sy) + f(qs^2 + rz) + f(qs^2 - rz) + f(u),
\]
where $s$ is a small prime chosen similarly to $r$. The next step in such a generalization would be to solve (for $q$) a congruence such as
\[
n - f(t) \equiv 2f(qs^2) \pmod{r^2}.
\]
Here, unfortunately, we tend to get stuck, since a general cubic polynomial $f$ is only surjective modulo finitely many primes $r$. The author has not been able to find any simple variation of Watson's argument which gets around this difficulty.

In addition to the above results about general cubic polynomials, we also find an explicit bound for the problem of representing a number as a sum of seven octahedral numbers, reducing Pollock's octahedral number conjecture to a finite computation.

\begin{repthm}{octahedral} Every number $n$ greater than $e^{10^7}$ is a sum of seven positive octahedral numbers, where an octahedral number is a number of the form $\frac{2x^3+x}{3}$ ($x$ an integer).
\end{repthm}

In this case we have two distinct primes $p$, namely $2$ and $3$, such that the octahedral polynomial is surjective modulo all powers of $p$. This allows us to take a shortcut in the general argument, and deal with a single ternary quadratic form, for which we have the following result.

\begin{repthm}{quadratic} If $m, m' > e^{6.6\cdot 10^6}$, $m, m' \equiv \pm 2 \pmod{5}$, $m\equiv 3\pmod{4}$, and $m' \equiv 2\pmod{4}$, then at least one of $m, m'$ can be primitively represented by the quadratic form $83x^2 + 91y^2 + 99z^2$ (a primitive representation of $m$ by a quadratic form $Q(x,y,z)$ is a triple $x,y,z$ with $\gcd(x,y,z) = 1$ and $Q(x,y,z) = m$).
\end{repthm}

The proof of this follows the argument in Linnik \cite{linnik-quaternions} with several refinements in order to reduce the bounds. The main inefficiency in Linnik's argument is his use of the divisor bound $\tau(n) \ll_{\epsilon} n^{\epsilon}$, which leads to a final bound which is doubly exponential in the discriminant of the quadratic form. By modifying Linnik's geometric argument which bounds the number of ``conjugate pairs of quaternions directed by senior forms'' in terms of the number of representations of certain binary quadratic forms as sums of three squares (see Lemma \ref{senior}), and carefully bounding the resulting number theoretic sum using a level lowering trick (see Lemma \ref{level} and Lemma \ref{sum}), we show that on average the contributions from the divisor function are at most a power of $\log(n)$, which reduces the final bounds from doubly exponential to singly exponential in the discriminant.

One additional ingredient was needed to prove the above theorem, which may be of independent interest: a version of the Siegel-Tatuzawa-Hoffstein theorem \cite{hoffstein} which applies when the characters under consideration are not necessarily primitive, and when the discriminants of the characters are allowed to be much smaller than the range in which the theorem is usually applied. This is used to give lower bounds for the number of primitive representations of possibly non-squarefree numbers as a sum of three squares.

\begin{replem}{siegel} Let $t(m)$ be the number of primitive representations of $m$ as a sum of three squares. If $m, m'$ are positive integers with different squarefree parts which are not multiples of $4$ and not congruent to $7\pmod{8}$, and if $0 \le \epsilon < 10^{-3}$, then we have
\[
\max\left(\frac{t(m)}{m^{\frac{1}{2}-\epsilon}},\ \frac{t(m')}{m'^{\frac{1}{2}-\epsilon}}\right) \ge \frac{12\epsilon}{\pi}\prod_{p\text{ \rm odd prime}} \min\left(1,p^{2\epsilon}\left(1-\frac{1}{p}\right)\right).
\]
\end{replem}

The proof of this variation of the Siegel-Tatuzawa-Hoffstein theorem uses standard techniques, but it appears that until now no one has published such a result which does not require the squarefree parts of $m,m'$ to be larger than $e^{1/\epsilon}$.

\section{Some elementary lemmata}

First we recall a useful lemma of Watson \cite{watson-cubic} on the number of values taken on by a cubic polynomial modulo a prime $p$.
\begin{lem}\label{watson}
If $p\ne 3$ is a prime and $a\not\equiv 0\mbox{ \rm (mod } p)$, then the congruence
\[
x^3+ax\equiv n\mbox{ \rm (mod } p)
\]
is solvable for exactly $\lfloor\frac{2p+1}{3}\rfloor$ congruence classes $n$.
\end{lem}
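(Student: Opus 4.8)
The plan is to compute the size of the image of the function $g\colon \mathbb{F}_p \to \mathbb{F}_p$ given by $g(x) = x^3 + ax$. The prime $p = 2$ is trivial (there $g \equiv 0$, and $\lfloor 5/3\rfloor = 1$), so assume $p$ is odd and $p \neq 3$. For $n \in \mathbb{F}_p$ the fiber $g^{-1}(n)$ is the set of roots of $F_n(X) = X^3 + aX - n$, whose discriminant is $\Delta(n) = -4a^3 - 27n^2$. The first step is to record the possible factorization shapes of $F_n$. Since $p \neq 3$ and $a \not\equiv 0$, the polynomial $F_n$ can never have a triple root (this would force $3a \equiv 0$); so either $\Delta(n) = 0$, in which case $F_n = (X-r)^2(X+2r)$ with $r \neq 0$ and $F_n$ has exactly two roots, or $F_n$ is separable. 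For separable $F_n$ the Galois group over $\mathbb{F}_p$ is cyclic, so the factorization type is $(1,1,1)$, $(1,2)$, or $(3)$ but never the ``$S_3$'' type; letting Frobenius act on $\sqrt{\Delta(n)}$ then shows that $F_n$ has exactly one root when $\Delta(n)$ is a nonsquare and zero or three roots when $\Delta(n)$ is a nonzero square.

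Next I would count residues in each class. The number $Z$ of $n$ with $\Delta(n) = 0$ is the number of $n$ with $27n^2 = -4a^3$, namely $1 + \left(\frac{-3a}{p}\right)$. For the separable residues I would evaluate the character sum $\sum_n \left(\frac{-4a^3 - 27n^2}{p}\right)$: this is a standard sum of the form $\sum_n \left(\frac{\alpha n^2 + \beta}{p}\right)$ with $\alpha,\beta \not\equiv 0$, which equals $-\left(\frac{\alpha}{p}\right) = -\left(\frac{-3}{p}\right)$. Together with the value of $Z$, this determines the number $B$ of $n$ with $\Delta(n)$ a nonsquare and the number $A$ of $n$ with $\Delta(n)$ a nonzero square.

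To finish, note that the fibers partition $\mathbb{F}_p$, so $\sum_n |g^{-1}(n)| = p$: the $B$ nonsquare-discriminant residues contribute one preimage each, the $Z$ zero-discriminant residues contribute two each, and of the $A$ square-discriminant residues a certain number $A_3$ contribute three each while the rest contribute none, whence $A_3 = \tfrac13(p - B - 2Z)$. The image size is $B + Z + A_3 = \tfrac13(p + 2B + Z)$, and on substituting the values of $B$ and $Z$ the term $\left(\frac{-3a}{p}\right)$ cancels, leaving $\tfrac13\left(2p + \left(\frac{-3}{p}\right)\right)$. Since $\left(\frac{-3}{p}\right) = +1$ for $p \equiv 1 \pmod 3$ and $-1$ for $p \equiv 2 \pmod 3$, in both cases this equals $\lfloor \frac{2p+1}{3} \rfloor$, which is the claim.

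I expect the delicate step to be the classification in the first paragraph: relating the number of $\mathbb{F}_p$-roots of $F_n$ to the quadratic character of $\Delta(n)$, which rests on the cyclicity of Galois groups over finite fields together with careful treatment of the degenerate case $\Delta(n) = 0$ (and of the prime $2$). The character sum and the bookkeeping are routine. An alternative that avoids the Galois input computes $\sum_n |g^{-1}(n)|^2$ as the number of pairs $(x,y)$ with $g(x) = g(y)$ --- that is, $p$ plus the number of ordered pairs with $x \neq y$ and $x^2 + xy + y^2 = -a$, a binary form of discriminant $-3$ whose representation numbers are classical --- and combines this second-moment identity with $\sum_n |g^{-1}(n)| = p$ and the observation that a fiber has size $2$ exactly when $\Delta(n) = 0$; this yields the same count.
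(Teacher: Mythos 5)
Your argument is correct and complete. The paper itself gives no proof of this lemma --- it simply cites Lemma 1 of Watson's paper on the congruence $x^3+ax\equiv b$ --- so your write-up supplies a genuine self-contained proof rather than an alternative to one in the text. The route you take (classifying fibers by the quadratic character of the discriminant $\Delta(n)=-4a^3-27n^2$, evaluating $\sum_n\bigl(\tfrac{-27n^2-4a^3}{p}\bigr)=-\bigl(\tfrac{-3}{p}\bigr)$, and closing with the first-moment identity $\sum_n|g^{-1}(n)|=p$) is sound at every step: the exclusion of triple roots uses $p\neq 3$ and $a\not\equiv 0$ correctly, $Z=1+\bigl(\tfrac{-3a}{p}\bigr)$ is right, and the cancellation of $\bigl(\tfrac{-3a}{p}\bigr)$ in $\tfrac{1}{3}\bigl(p+2B+Z\bigr)$ does occur, leaving $\tfrac{1}{3}\bigl(2p+\bigl(\tfrac{-3}{p}\bigr)\bigr)=\lfloor\tfrac{2p+1}{3}\rfloor$; the separate treatment of $p=2$ is also necessary and handled correctly. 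The second-moment variant you sketch at the end --- counting pairs with $g(x)=g(y)$ via the factorization $x^3+ax-(y^3+ay)=(x-y)(x^2+xy+y^2+a)$ and the representation numbers of the form $x^2+xy+y^2$ --- is closer in spirit to Watson's original elementary argument and avoids the Galois-theoretic input, but either version is a legitimate proof of the lemma.
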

\begin{proof}
See \cite{watson-cubic}, Lemma 1.
\end{proof}

\begin{lem}\label{p-adic} Let $f(x) = a\frac{x^3-x}{6} + b\frac{x^2-x}{2} + cx$ with $\gcd(a,b,c) = 1$, and let $p$ be prime. Then $f$ is surjective as a function $\mathbb{Z}_p \rightarrow \mathbb{Z}_p$ if and only if either the $p$-adic valuation of $\gcd\left(\frac{a}{6},\frac{b}{2}\right)$ is nonzero (in particular, if $p = 2$ this means that either $a$ and $b$ are both multiples of $4$ or at least one of $a,b$ is odd), or $p = 3$ and $3 \mid b$ and $\frac{a}{6} \not\equiv c \pmod{3}$.

In particular, if $f$ is odd and not of the form $f(x) = x^3+6kx$, then there exists a prime $p$ such that $f$ is surjective as a function $\mathbb{Z}_p \rightarrow \mathbb{Z}_p$.
\end{lem}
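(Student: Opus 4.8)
The plan is to settle surjectivity of $f\colon\mathbb{Z}_p\to\mathbb{Z}_p$ one prime at a time. Since $\mathbb{Z}_p$ is compact and $f$ continuous, $f(\mathbb{Z}_p)$ is closed, so $f$ is surjective iff it is surjective modulo $p^k$ for every $k$; and by the quantitative Hensel lemma --- if $g\in\mathbb{Z}_p[x]$ and $v_p\big(g(x_0)-m\big)>2\,v_p\big(g'(x_0)\big)$ then $g(x)=m$ has a root in $\mathbb{Z}_p$ --- surjectivity follows as soon as one names a residue class where $f$ takes the desired value and where the relevant derivative is a $p$-adic unit. So in the affirmative cases I will exhibit such a point, and in the negative cases a congruence obstruction: either a count of values modulo $p$, or a residue class modulo $p^2$ (in one case modulo $p^3$) missed by $f$.

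For $p\ge 5$ we have $\tfrac16\in\mathbb{Z}_p$, so write $f(x)=Ax^3+Bx^2+Cx$ with $A=\tfrac a6$, $B=\tfrac b2$, $C=c-\tfrac a6-\tfrac b2$, so that $v_p(A)=v_p(a)$ and $v_p(B)=v_p(b)$. If $p\mid a$ and $p\mid b$ then $p\nmid c$, hence $f'\equiv c\pmod p$ is everywhere a unit and $f\equiv cx\pmod p$ is onto, so Hensel gives surjectivity. If $p\mid a$ but $p\nmid b$, then $f$ reduces modulo $p$ to a nondegenerate quadratic, taking only $\tfrac{p+1}2<p$ values. If $p\nmid a$, complete the cube (legitimate since $p\ne 3$) to get $f(y+x_0)=Ay^3+ey+g$ with $e=f'(x_0)$: if $p\nmid e$ then Lemma~\ref{watson} shows $f$ takes exactly $\lfloor\tfrac{2p+1}3\rfloor<p$ values modulo $p$, and if $p\mid e$ then $v_p(Ay^3+ey)=v_p\big(y(Ay^2+e)\big)$ equals $0$ for $p\nmid y$ and is $\ge 2$ for $p\mid y$, so $g+p$ is never attained. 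Thus for $p\ge 5$, $f$ is surjective on $\mathbb{Z}_p$ iff $p\mid a$ and $p\mid b$, i.e.\ iff $v_p\big(\gcd(\tfrac a6,\tfrac b2)\big)=\min\big(v_p(a),v_p(b)\big)\ne0$, matching the stated criterion.

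For $p\in\{2,3\}$ we have $v_p(6)=1$, so $f$ is surjective on $\mathbb{Z}_p$ iff the integer cubic $F(x)=6f(x)=ax^3+3bx^2+(6c-a-3b)x$ satisfies $F(\mathbb{Z}_p)=p\mathbb{Z}_p$ (the inclusion $\subseteq$ is automatic). One now works through the cases dictated by the valuations of $a,b,c$, using $\min(v_p(a),v_p(b),v_p(c))=0$. In the surjective cases one points to the unit derivative: for $p=2$ with $4\mid a$ and $4\mid b$ (so $c$ odd), $f\in\mathbb{Z}_2[x]$ and $f'\equiv c\pmod2$; for $p=2$ with $a$ odd, after the substitution $x\mapsto a^{-1}x$, rescaling by $a^2$, and translating to kill the quadratic term one gets $x^3+Dx+E$ with $D$ odd and $E\in2\mathbb{Z}_2$, and then $y\mapsto 4y^3+Dy$ has everywhere-unit derivative, so feeding in $x=2y$ shows $F(\mathbb{Z}_2)=2\mathbb{Z}_2$; for $p=2$ with $a$ even and $b$ odd, $F'\equiv6c-a-3b\equiv1\pmod2$ is a unit, so Hensel from $x_0=0$ already covers $2\mathbb{Z}_2$; and for $p=3$ the three surjective subcases ($3\nmid a$; $9\mid a$ and $3\mid b$; $v_3(a)=1$ and $3\mid b$ and $\tfrac a6\not\equiv c\pmod3$) are precisely those where the pertinent derivative ($F'$, resp.\ $f'$, resp.\ $f'$) reduces modulo $3$ to the nonzero constant $-a$, resp.\ $c$, resp.\ $c-\tfrac a6$, again letting Hensel close the case. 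In the negative cases the obstruction is explicit: for $p=2$ (so $a,b$ even with $\min(v_2(a),v_2(b))=1$ and $c$ odd) $f$ is constant modulo $4$ on the odd residues when $v_2(b)=1$ and constant modulo $4$ on the even residues when $v_2(b)\ge2$, so a class modulo $4$ is missed; for $p=3$, when $3\mid a$ but $3\nmid b$, $f$ reduces modulo $3$ to a nondegenerate quadratic, and when $v_3(a)=1$, $3\mid b$, $\tfrac a6\equiv c\pmod3$, one has $f'\equiv0\pmod3$ everywhere, whence $f(x+3)\equiv f(x)\pmod9$ and $f$ takes only $3<9$ values modulo $9$. (Every such congruence check must respect that $\tfrac{x^3-x}6$ has period $p^{k+1}$, not $p^k$, modulo $p^k$ when $p\in\{2,3\}$.) This block of casework --- and in particular separating the $p=3$ family $\tfrac a6\not\equiv c$ from its non-surjective twin $\tfrac a6\equiv c$ --- is the real content; everything else is a single Hensel step or a single value count.

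Finally, for the ``in particular'' assertion set $b=0$, so $\gcd(a,c)=1$. The first criterion becomes $v_p(\tfrac a6)\ne0$, i.e.\ $v_p(a)\ne v_p(6)$: it holds for some $p\ge5$ exactly when $a$ has a prime factor $\ge5$, for $p=2$ exactly when $4\mid a$ or $a$ is odd, and for $p=3$ exactly when $9\mid a$ or $3\nmid a$; the residual $p=3$ criterion $\tfrac a6\not\equiv c\pmod3$ is available when $v_3(a)=1$. Assuming as usual that the leading coefficient is positive, running through the finitely many remaining $a$ (those with no prime factor $\ge5$ and with $v_2(a)\le1$, $v_3(a)\le1$) shows that the only primitive odd cubic for which no prime $p$ works is $a=6$ together with $c\equiv1\pmod3$ --- that is, $f(x)=x^3+6kx$ --- which is excluded by hypothesis, so some $p$ always works.
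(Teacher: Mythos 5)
Your proof is correct and follows essentially the same route as the paper's: Hensel's lemma at points of unit derivative for the surjective cases, and Watson's value count together with explicitly missed residue classes (mod $p$, mod $4$, mod $9$) for the converse. The only real difference is completeness --- the paper dismisses the converse for $p=2,3$ with ``similar reasoning shows'' while you spell out those obstructions, and your normalizations (depressing the cubic for $p\ge 5$, and for $p=2$ with $a$ odd) are cosmetic variants of the paper's substitutions $f(2x)$, $f(2x+1)$, $f(3x)$.
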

\begin{proof} First suppose that $p$ divides both $\frac{a}{6}$ and $\frac{b}{2}$. Then we must have $p \nmid c$, so $f:\mathbb{Z}/p \rightarrow \mathbb{Z}/p$ is surjective, and $f'(x) \equiv c \not\equiv 0 \pmod{p}$ so by Hensel's Lemma $f$ is surjective onto $\mathbb{Z}_p$.

Now suppose $p=2$ and one of $a,b$ is odd. If $a+b$ is odd, then $f(2x) = \frac{4a}{3}x^3 + 2bx^2 + (2c-\frac{a}{3}-b)x \equiv x \pmod{2}$ is surjective onto $\mathbb{Z}_2$ by Hensel's Lemma. Otherwise $b$ is odd, so $f(2x+1) = \frac{4a}{3}x^3 + 2(a+b)x^2 + (2c+\frac{4a}{3}+b)x+c \equiv x+c \pmod{2}$ is surjective onto $\mathbb{Z}_2$ by Hensel's Lemma.

Now suppose $p=3$. If $a$ is not a multiple of $3$ then $f(3x) = \frac{9a}{2}x^3 + \frac{9b}{2}x^2 + (3c-\frac{3b}{2} - \frac{a}{2})x \equiv ax \pmod{3}$ is surjective onto $\mathbb{Z}_3$ by Hensel's Lemma. If $3$ divides $a,b$ and $\frac{a}{6} \not\equiv c \pmod{3}$, then $f(x) \equiv cx \pmod{3}$ is surjective onto $\mathbb{Z}/3$ and $f'(x) \equiv c - \frac{a}{6} \ne 0 \pmod{3}$ so by Hensel's Lemma $f$ is surjective onto $\mathbb{Z}_3$.

For the converse, note that if $\frac{a}{6}$ and $\frac{b}{2}$ are both $p$-integral then $f$ induces functions $\mathbb{Z}/p\rightarrow \mathbb{Z}/p$ and $\mathbb{Z}/p^2\rightarrow \mathbb{Z}/p^2$. If $p \ge 5$ then by Lemma \ref{watson} and the fact that only $\frac{p+1}{2}$ congruence classes mod $p$ are squares, the only way for $f$ to be a bijection mod $p$ is for $f(x)$ to be congruent to either a linear function of $x$, or a constant plus the cube of a linear function of $x$, and in the second case $f$ can't be surjective mod $p^2$. If $p$ is $2$ or $3$ then similar reasoning shows that $f$ can only be a bijection mod $p^2$ if it satisfies the given conditions.
\end{proof}

By the proof of the preceding Lemma we easily obtain the following Corollary.

\begin{cor}\label{octa}
If $a,b$ are any nonnegative integers, then for any integer $n$ the congruence
\[
\frac{2x^3+x}{3} \equiv n\pmod{2^u3^v}
\]
is solved by exactly $3$ congruence classes $x$ modulo $2^u3^{v+1}$.
\end{cor}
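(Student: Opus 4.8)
The plan is to count the solutions with the Chinese remainder theorem, treating the primes $2$ and $3$ separately and reading off the needed bijectivity statements from the proof of Lemma~\ref{p-adic}. Write $g(x) = \frac{2x^3+x}{3}$ and note that $g(x) = 4\cdot\frac{x^3-x}{6} + x$, so $g$ is the polynomial $f$ of Lemma~\ref{p-adic} in the case $a = 4$, $b = 0$, $c = 1$.

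At the prime $2$ we have $\tfrac a6 = \tfrac23,\ \tfrac b2 = 0 \in 2\mathbb{Z}_2$ and $c$ odd, so, exactly as in the first paragraph of the proof of Lemma~\ref{p-adic}, $g$ reduces to the identity modulo $2$ and has derivative $g'(x) = 2x^2 + \tfrac13$, a $2$-adic unit; Hensel's lemma then makes $g$ a bijection of $\mathbb{Z}_2$, hence of $\mathbb{Z}/2^u$ for every $u$. So $g(x)\equiv n\pmod{2^u}$ has exactly one solution $x$ modulo $2^u$.

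At the prime $3$ the point to watch is that, because $g$ has a $\tfrac13$ in it, the reduction $g\bmod 3^v$ is not a function of $x\bmod 3^v$ but only of $x\bmod 3^{v+1}$; I would therefore stratify by the residue $r\in\{0,1,2\}$ of $x$ modulo $3$. For each $r$ one computes $g(3y+r) = 18y^3 + 18ry^2 + (6r^2+1)y + \tfrac{2r^3+r}{3}$, a polynomial in $y$ with integer coefficients, reducing modulo $3$ to $y$ (for $r=0,2$) or $y+1$ (for $r=1$), with derivative $\tfrac{d}{dy}g(3y+r) = 3g'(3y+r) = 6(3y+r)^2+1 \equiv 1\pmod 3$ — which is the sort of computation carried out for $f(3y)$ in the $p=3$ case of the proof of Lemma~\ref{p-adic} (here $a=4$ is not a multiple of $3$). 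By Hensel's lemma each $y\mapsto g(3y+r)$ is a bijection of $\mathbb{Z}_3$, hence of $\mathbb{Z}/3^v$. Thus, for any $n$, the congruence $g(x)\equiv n\pmod{3^v}$ has, in each residue class $r$ of $x$ mod $3$, a unique solution $y$ modulo $3^v$, i.e.\ a unique $x$ modulo $3^{v+1}$, for a total of exactly $3$ solutions $x$ modulo $3^{v+1}$.

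Finally I would combine the two counts: expressing a class of $x$ modulo $2^u3^{v+1}$ via its reductions modulo $2^u$ and modulo $3^{v+1}$, the congruence $g(x)\equiv n\pmod{2^u3^v}$ is equivalent to $g(x)\equiv n\pmod{2^u}$ (one class modulo $2^u$) together with $g(x)\equiv n\pmod{3^v}$ (three classes modulo $3^{v+1}$), yielding $1\cdot 3 = 3$ classes modulo $2^u3^{v+1}$. The only subtle point in the whole argument is this bookkeeping at the prime $3$ — recognizing that $g\bmod 3^v$ lives on $\mathbb{Z}/3^{v+1}$ rather than $\mathbb{Z}/3^v$ — which the stratification by $x\bmod 3$ renders routine; everything else is the Hensel-lemma argument already made in the proof of Lemma~\ref{p-adic}.
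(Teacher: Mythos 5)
Your proof is correct and is essentially the argument the paper intends: the Corollary is stated with no proof beyond ``by the proof of the preceding Lemma,'' and your write-up is exactly that — the Hensel computations from Lemma \ref{p-adic} at $p=2$ and $p=3$ (extended from the residue $r=0$ treated there to all three residues mod $3$), combined by CRT, with the correct bookkeeping that $g \bmod 3^v$ is a function of $x \bmod 3^{v+1}$.
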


Finally we have an easy lemma about multiplicatively independent numbers.
\begin{lem}\label{power}
If $a,b > 1$ are multiplicatively independent positive integers (that is, if $a^x \ne b^y$ for all pairs of positive integers $x,y$), then for every $\epsilon > 0$ there exists a number $D_0$ such that for every $D > D_0$ there exist positive integers $x,y$ satisfying
\[
D < a^xb^y < (1+\epsilon)D.
\]
\end{lem}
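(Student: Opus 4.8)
The plan is to pass to logarithms. Writing $\alpha = \log a$, $\beta = \log b$ and $\delta = \log(1+\epsilon) > 0$, it suffices (after exponentiating) to show that for every sufficiently large real number $L$ there exist positive integers $x,y$ with $L < x\alpha + y\beta < L + \delta$. The first point to record is that multiplicative independence of $a$ and $b$ is precisely the statement that $\alpha/\beta$ is irrational: since $\alpha,\beta > 0$, a rational value $\alpha/\beta = y/x$ with $x,y \geq 1$ is the same as the relation $a^x = b^y$. I should also note that replacing $\epsilon$ by a smaller positive number only strengthens the conclusion, so I may assume $\delta < \beta$.

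Next I would invoke the classical fact --- Kronecker's theorem, or Weyl's equidistribution criterion, or the three-distance theorem --- that when $\alpha/\beta$ is irrational the residues $n\alpha \bmod \beta$, for $n = 1,2,3,\dots$, are dense in $[0,\beta)$. Using density I would then fix finitely many positive integers $n_1,\dots,n_k$ for which the points $r_i := n_i\alpha - \beta\lfloor n_i\alpha/\beta\rfloor$ cut the circle $\mathbb{R}/\beta\mathbb{Z}$ into arcs each of length strictly less than $\delta$; set $\ell_i := \lfloor n_i\alpha/\beta\rfloor$, so that $n_i\alpha = \ell_i\beta + r_i$ with $r_i \in [0,\beta)$.

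Now take a large real number $L$ and write $m = \lfloor L/\beta\rfloor$, $s = L - m\beta \in [0,\beta)$. Because every arc of the partition has length less than $\delta$, the image in $\mathbb{R}/\beta\mathbb{Z}$ of the interval $(s, s+\delta)$ --- an arc of length $\delta$ --- must contain some $r_i$, since otherwise $s$ would lie inside a single arc of the partition of length at least $\delta$. For this $r_i$ there are two cases. If $r_i \in (s, s+\delta)$, then $n_i\alpha + (m - \ell_i)\beta = m\beta + r_i$ lies in $(L, L+\delta)$. If instead $r_i + \beta \in (s, s+\delta)$, then $n_i\alpha + (m+1-\ell_i)\beta = (m+1)\beta + r_i$ lies in $(L, L+\delta)$. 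Taking $L_0 := \beta\bigl(1 + \max_i \ell_i\bigr)$ guarantees $m > \max_i \ell_i$ as soon as $L > L_0$, so the coefficient of $\beta$ above is a positive integer, while $x = n_i$ is positive by construction. Exponentiating and setting $D_0 := e^{L_0}$: for every $D > D_0$, applying the above with $L = \log D$ produces positive integers $x,y$ with $D < a^x b^y = e^{x\alpha + y\beta} < (1+\epsilon)D$.

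I do not expect a genuine obstacle here; the whole content of the lemma is the density of an irrational rotation orbit on the circle, which is entirely standard. The only steps needing a little care are the wrap-around at integer multiples of $\beta$ (handled by the two cases above) and the choice of $L_0$ large enough that the exponents produced are strictly positive rather than merely nonnegative.
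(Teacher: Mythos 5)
Your proof is correct. The underlying fact --- that multiplicative independence of $a$ and $b$ is equivalent to irrationality of $\log a/\log b$, so that the associated rotation orbit is dense --- is the same one the paper exploits, but the mechanics differ. The paper applies the pigeonhole principle once to produce a single pair of integers $u,v$ (of mixed sign) with $1 < a^u b^v < 1+\epsilon$, then starts from the largest power $a^k \le D$ and multiplies repeatedly by $a^u b^v$ until the product first exceeds $D$; since the number of multiplications needed is bounded by $\lceil \log(a)/\log(a^u b^v)\rceil$, the exponent $x = k+ul$ stays positive once $k$ is large, and this also yields a completely explicit $D_0$. You instead place a finite $\delta$-dense net $\{r_i\}$ of orbit points on the circle $\mathbb{R}/\beta\mathbb{Z}$ and, for each target $L$, select the net point landing in the window $(s,s+\delta)$; this is equally valid, and your handling of the wrap-around case and of the positivity of $y$ via the choice of $L_0$ is careful and complete. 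The trade-off is that your argument invokes density of the orbit as a classical black box (itself a pigeonhole consequence), whereas the paper's version is self-contained and hands you an explicit constant $D_0$ --- a minor point, but relevant here since the main theorems advertise effectively computable constants.
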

\begin{proof} By a standard application of the pigeonhole principle, there exist integers $u,v$ such that $1 < a^ub^v < 1 + \epsilon$. Suppose without loss of generality that $v > 0$. Set
\[
D_0 = a^{1+|u|\left\lceil\frac{\log(a)}{\log(a^ub^v)}\right\rceil}.
\]
Then for any $D > D_0$, let $k$ be the largest integer such that $a^k \le D$, and let $l$ be the least integer such that $D < a^k(a^ub^v)^l$. In this case we necessarily have
\[
D < a^k(a^ub^v)^l \le a^ub^vD < (1+\epsilon)D,
\]
\[
1+|u|\left\lceil\frac{\log(a)}{\log(a^ub^v)}\right\rceil \le k,
\]
and
\[
0 < l \le \left\lceil\frac{\log(a)}{\log(a^ub^v)}\right\rceil.
\]
Taking $x = k + ul, y = vl$, we see that $x,y > 0$ and $D < a^xb^y < (1+\epsilon)D$.
\end{proof}

\section{Odd cubic polynomials}

We rely on the following result of Linnik \cite{linnik-quaternions}.

\begin{lem}\label{linnik}
If $\alpha,\beta,\gamma,s$ are pairwise relatively prime odd natural numbers, $s$ a prime, such that $-\alpha\beta$ is a square modulo $\gamma$, $-\alpha\gamma$ is a square modulo $\beta$, and $-\beta\gamma$ is a square modulo $\alpha$, then the quadratic form $\alpha x^2+\beta y^2+\gamma z^2$ primitively represents every sufficiently large integer $m$ such that $\left(\frac{-\alpha\beta\gamma m}{s}\right) = 1$, $m\not\equiv 0\pmod{4}$, and such that $\alpha\beta\gamma m$ can be primitively represented as a sum of three squares (here $\left(\frac{-\alpha\beta\gamma m}{s}\right)$ is the Jacobi symbol).

Furthermore, there is an effectively computable constant $C$ depending only on $\alpha,\beta,\gamma,s$ such that if $m, m'$ are two numbers as above having different squarefree parts and satisfying $C < m,m'$, then at least one of $m,m'$ can be primitively represented by the quadratic form $\alpha x^2+\beta y^2+\gamma z^2$.
\end{lem}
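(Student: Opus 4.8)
This is essentially Linnik's theorem on representation of large numbers by ternary quadratic forms \cite{linnik-quaternions}, in the form that is conditional on an auxiliary prime $s$ with $\left(\frac{-\alpha\beta\gamma m}{s}\right) = 1$ (a form Linnik proves unconditionally). The plan is to reproduce that ergodic argument while keeping track of what is effective. The key point is that everything except one Dirichlet $L$-value is effective, which is exactly why the Lemma comes in two parts: an ineffective ``all large $m$'' statement and an effective ``one of $m,m'$'' statement.

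\textbf{Step 1: the hypotheses are precisely the conditions for $m$ to be primitively represented by the genus of $Q = \alpha x^2 + \beta y^2 + \gamma z^2$.} Since $Q$ is positive definite, $\mathbb{R}$-solvability is automatic for $m>0$. At an odd prime $p\mid\alpha$ (and symmetrically for $\beta,\gamma$), modulo $p$ we are representing $m$ by $\beta y^2 + \gamma z^2$, and the hypothesis that $-\beta\gamma$ is a square mod $\alpha$ makes this a split binary form, hence it represents every residue; Hensel's lemma lifts a primitive solution to $\mathbb{Z}_p$. At odd $p\nmid\alpha\beta\gamma$ the form $Q$ is unimodular of rank $3$ and primitively represents every element of $\mathbb{Z}_p$. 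At $p=2$, since $\alpha,\beta,\gamma$ are odd, $Q$ is $2$-adically equivalent to $x^2+y^2+z^2$ up to units, so primitive $2$-adic solvability amounts to $m\not\equiv 0,4,7\pmod 8$; this follows from $m\not\equiv 0\pmod 4$ together with $\alpha\beta\gamma m$ being primitively a sum of three squares (the latter rules out the class $7\bmod 8$). Thus $m$ is primitively represented by the genus of $Q$.

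\textbf{Step 2: counting the primitive representations by the genus.} By the Minkowski--Siegel mass formula, the automorph-weighted number of primitive representations of $m$ by forms in the genus of $Q$ equals a product of local densities times a constant multiple of $\sqrt m$. Each archimedean and $p$-adic density is bounded below by an effective constant depending only on $\alpha,\beta,\gamma$, except that the primes $p\mid m$ together contribute a factor which is a bounded multiple of $L(1,\chi_{-\alpha\beta\gamma m})$. By Siegel's theorem this is $\gg_\epsilon m^{-\epsilon}$, but ineffectively; this is the sole source of ineffectivity. For the effective second assertion I would invoke the Tatuzawa-type dichotomy of Lemma \ref{siegel}: among any two $m,m'$ with distinct squarefree parts, at least one has $L(1,\chi)$ bounded below by an effective constant, hence at least one of $m,m'$ admits $\gg m^{1/2}$ primitive representations by the genus with an effective implied constant.

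\textbf{Step 3: equidistribution across the genus, and the main obstacle.} It remains to pass from ``the genus represents $m$'' to ``the form $Q$ itself represents $m$.'' The primitive representations of $m$ by the forms in the genus of $Q$ sit in a standard correspondence --- through the associated quaternion order, or through ideal classes of an imaginary quadratic order attached to $-\alpha\beta\gamma m$ --- with a finite set carrying an action of Hecke operators $T_\ell$ for $\ell\nmid 2\alpha\beta\gamma m$. Linnik's ergodic argument shows that when $\left(\frac{-\alpha\beta\gamma m}{s}\right)=1$ the operator $T_s$ connects any two classes in the (spinor) genus of $Q$ by a path of length $O(\log m)$, and because $s$ is fixed in advance one only needs the elementary Eisenstein bound for $T_s$ rather than the Ramanujan bound, so the mixing rate is effective in $\alpha,\beta,\gamma,s$. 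Combined with Step 2, every class in the spinor genus receives $\gg m^{1/2}$ primitive representations, so $Q$ primitively represents $m$ once $m$ exceeds an effective bound. One must also check that the residue conditions on $m$ modulo $s$ and modulo $\alpha,\beta,\gamma$ place $m$ in the spinor genus represented by $Q$ (ruling out a spinor-exceptional obstruction), a finite verification in Gauss's theory of ternary forms. I expect Step 3 --- making Linnik's mixing argument simultaneously correct (tracking the spinor genus and the primitivity of representations through the quaternion correspondence) and effective with a clean constant --- to be the main obstacle; Steps 1 and 2 are classical modulo the unavoidable ineffectivity of Siegel's theorem.
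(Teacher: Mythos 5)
Your proposal matches the paper's approach: the paper's proof of this lemma is precisely a citation of Theorem 2 of Chapter II of \cite{linnik-quaternions} (with the coprimality condition between $m$ and $\alpha\beta\gamma$ removed), together with the observation that the sole ineffective input in Linnik's argument is the Siegel lower bound on the number of primitive representations of $\alpha\beta\gamma m$ as a sum of three squares, which Lemma \ref{siegel} supplies for at least one of $m,m'$ once their squarefree parts differ --- exactly your Steps 2 and 3. One caveat: the effectivity of the mixing step in Linnik's argument (reproduced explicitly in the paper for $\alpha=83$, $\beta=91$, $\gamma=99$, $s=5$) does not come from a spectral (``Eisenstein'') bound on $T_s$, but from a Cauchy--Schwarz count of ``conjugate pairs'' of representations sharing a quaternion divisor of norm $s^k$ together with an explicit count of non-backtracking walks avoiding good vertices on the fixed finite graph of norm-$\alpha\beta\gamma$ quaternions, so your gloss on why Step 3 is effective is not how the cited argument actually runs.
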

\begin{proof}
The first part of this Lemma is Theorem 2 from chapter II of \cite{linnik-quaternions} specialized to the case of diagonal quadratic forms, although that theorem contains the extra condition that $m$ is relatively prime to $\alpha\beta\gamma$. (The condition that $m$ is relatively prime to $\alpha\beta\gamma$ was only used in the proof of the main statement of Chapter II, Section 1, in order to get the exact count of the number of primitive representations, and without it we still get a lower bound on the number of primitive representations.)

For the second part of the Lemma, we note that the only ineffective bound in Linnik's argument is the lower bound on the number of primitive representations of $\alpha\beta\gamma m$ as a sum of three squares. By Siegel's theorem, we can find effective lower bounds for the number of primitive representations of at least one of $\alpha\beta\gamma m, \alpha\beta\gamma m'$ as a sum of three squares if $m, m'$ have different squarefree parts (see Lemma \ref{siegel} for details).

We will give a self contained proof of this lemma, with explicit constants, in the special case $\alpha = 83, \beta = 91, \gamma = 99, s = 5$ in a later section.
\end{proof}

\begin{rem} If $\alpha, \beta, \gamma$ satisfy the conditions of Lemma \ref{linnik}, then we necessarily have $\alpha \equiv \beta \equiv \gamma \pmod{4}$. To see this, suppose that $\alpha \equiv 1 \pmod{4}$. Expanding $\left(\frac{-\beta\gamma}{\alpha}\right)\left(\frac{-\alpha\gamma}{\beta}\right)\left(\frac{-\alpha\beta}{\gamma}\right) = 1\cdot 1\cdot 1 = 1$ and applying the law of quadratic reciprocity we get
\[
1 = \left(\frac{-1}{\alpha}\right)\left(\frac{\beta}{\alpha}\right)^2\left(\frac{\gamma}{\alpha}\right)^2\left(\frac{-\beta}{\gamma}\right)\left(\frac{-\gamma}{\beta}\right) = \left(\frac{-\beta}{\gamma}\right)\left(\frac{-\gamma}{\beta}\right),
\]
and this holds if and only if $\beta \equiv \gamma \equiv 1 \pmod{4}$. Thus if any of $\alpha, \beta, \gamma$ is $1 \pmod{4}$ then the other two are as well.

As a consequence of $\alpha \equiv \beta \equiv \gamma \equiv \pm 1 \pmod{4}$, we have the congruence
\[
\alpha + \beta + \gamma \equiv 3\alpha\beta\gamma \pmod{8},
\]
which can be shown by writing $\alpha = 4A\pm 1, \beta = 4B\pm 1, \gamma = 4C\pm 1$ and expanding the product on the right.
\end{rem}

\begin{thm}\label{oddcubic} If $f$ is an odd cubic integer-valued polynomial taking at least two distinct values modulo every prime and not of the form $f(x) = x^3 + 6kx$, then every number $n$ greater than an effectively computable constant depending only on $f$ can be written as a sum of seven positive values of $f$.
\end{thm}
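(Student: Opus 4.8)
Following Linnik's proof of the seven cubes theorem, as sketched in the introduction, the plan is to reduce the problem to a congruence together with a representation by a diagonal ternary quadratic form. Write $f(x)=A\frac{x^3-x}{6}+Cx$ with $A>0$ an integer and $\gcd(A,C)=1$ --- possible since $f$ is odd, integer-valued, and takes at least two values modulo every prime --- and use Lemma \ref{p-adic}, together with the hypothesis that $f$ is not of the form $x^3+6kx$, to fix a prime $p$ for which $f$ is surjective, hence (by the converse in that lemma) bijective, on $\mathbb{Z}_p$. From the identity
\begin{align*}
& f(\alpha q+x)+f(\alpha q-x)+f(\beta q+y)+f(\beta q-y)+f(\gamma q+z)+f(\gamma q-z)+f(u) \\
& \qquad = Aq\bigl(\alpha x^2+\beta y^2+\gamma z^2\bigr)+f(u)+2f(\alpha q)+2f(\beta q)+2f(\gamma q),
\end{align*}
it suffices, given a large $n$, to exhibit a power $q=p^v$, a diagonal form $\alpha x^2+\beta y^2+\gamma z^2$ with $\alpha<\beta<\gamma$ pairwise coprime and odd, and an integer $u$, such that $m:=\bigl(n-f(u)-2f(\alpha q)-2f(\beta q)-2f(\gamma q)\bigr)/(Aq)$ is a positive integer with $m<\alpha^3q^2$ which is primitively represented by the form. (The bound $m<\alpha^3q^2$ forces $|x|<\alpha q$, $|y|<\beta q$, $|z|<\gamma q$ in every such representation, so all seven summands are positive once $q$ is large.) The representation will come from Lemma \ref{linnik}, so I also need $\alpha,\beta,\gamma$ to satisfy its reciprocity hypotheses for some auxiliary prime $s$, and $m$ to meet the side conditions there: $4\nmid m$, $\alpha\beta\gamma m$ a primitive sum of three squares, and $\left(\frac{-\alpha\beta\gamma m}{s}\right)=1$.

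That $Aq$ divides $n-f(u)-2f(\alpha q)-2f(\beta q)-2f(\gamma q)$ is a congruence on $u$ modulo $Aq$, which is solvable because $f$ is surjective modulo $Aq$: surjectivity modulo the $p$-part is Lemma \ref{p-adic}, and modulo the prime-to-$p$ part of $A$ it again follows from that lemma --- for a prime $\ell\ge 5$ dividing $A$ one has $v_\ell(A/6)=v_\ell(A)>0$, while $\ell=2,3$ are checked directly as in its proof. With solvability in hand, the side conditions on $m$ and the inequality $0<m<\alpha^3q^2$ are arranged in the standard way: the reciprocity conditions together with $\alpha\equiv\beta\equiv\gamma\pmod 4$ are built into the choice of $\alpha,\beta,\gamma$ (by the Remark above these force $\alpha+\beta+\gamma\equiv 3\alpha\beta\gamma\pmod 8$, which is used to control $\alpha\beta\gamma m$ modulo $8$), the residues of $m$ modulo $8$ and modulo $s$ are fixed by solving the $u$-congruence to the larger modulus $\operatorname{lcm}(Aq,8,s)$ via the Chinese Remainder Theorem, and the inequalities are met by taking $u$ in a suitable window, which is nonempty exactly for $q$ in a range $c_1n^{1/3}<q<c_2n^{1/3}$ with $c_2/c_1$ of order $(\alpha/A)^{1/2}$. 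Since $q$ must be a power of $p$, such a $q$ need not exist when $(\alpha/A)^{1/2}<p$; this is the first real complication. To get around it I would fix, depending only on $f$, a finite family of triples $(\alpha_i,\beta_i,\gamma_i)$ of geometrically increasing sizes, each satisfying the hypotheses of Lemma \ref{linnik} with an auxiliary prime $s_i$, each with $\alpha_i$ exceeding a fixed large multiple of $A$, and such that the corresponding ranges overlap substantially and their union is a single interval of multiplicative width exceeding $p$ --- so that for every large $n$ it contains a power of $p$, lying well inside the range of some member of the family. Producing such a family is a finite search, which succeeds because the reciprocity conditions cut out only about half the residue classes modulo each of $\alpha_i,\beta_i,\gamma_i$ and primes of prescribed size in prescribed residue classes are plentiful.

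The main obstacle is the effectivity of the constant, which forces the use of the effective half of Lemma \ref{linnik}: that half represents $m$ only once it is paired with some $m'$ of different squarefree part. Equivalently --- since, by the Siegel-Tatuzawa argument underlying Lemma \ref{siegel}, the lower bound for the number of primitive representations of $\alpha\beta\gamma m$ as a sum of three squares is effective unless the squarefree part of $m$ lies in one fixed (though unknown) set --- it is enough to produce, for each large $n$, a single admissible $m$ whose squarefree part avoids that set. So one must rule out the possibility that, for some stubborn $n$, every admissible $m$ --- over all members of the family, all admissible powers of $p$, and all admissible $u$ --- shares a single squarefree part. For a fixed form and power the admissible $m$'s are the values $P_n(0),P_n(1),\dots,P_n(K)$ of an explicit cubic polynomial $P_n$ at $K\asymp\alpha/A$ consecutive integers; I would combine this abundance, the dependence of $P_n$ on $n$, and the freedom to change the power of $p$ (whose associated data differ by factors close to powers of the non-square $p$) or the member of the family, to preclude the degenerate case and supply the required pair once $n$ exceeds an effectively computable bound. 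Finally, the primes $2$ and $3$, which the introduction's sketch suppresses, need attention throughout: when $p\in\{2,3\}$ the modulus $Aq$ is divisible by $2$ (respectively $3$), so the common residue of $\alpha,\beta,\gamma$ modulo $4$ and the solvability of the side conditions on $m$ must be handled in the spirit of the proof of Lemma \ref{p-adic}.
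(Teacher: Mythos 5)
Your overall strategy is the paper's: a Linnik-style reduction to a diagonal ternary form, a finite family of forms at geometrically spaced scales to compensate for the sparseness of the powers of $p$, and an appeal to the effective (paired) half of Lemma \ref{linnik}. But there are two concrete gaps. First, you work with the integer-argument identity $f(\alpha q+x)+f(\alpha q-x)=A\alpha q\,x^2+2f(\alpha q)$, and the paper's own Remark following the theorem shows this identity is insufficient: for $f(x)=\frac{x^3-4x}{3}$ (which satisfies all hypotheses of the theorem) and $n\equiv 8\pmod{16}$, the congruence $n\equiv f(\alpha p^j+x)+\cdots+f(u)\pmod{16}$ has \emph{no} solutions with $p$ odd and $\alpha\equiv\beta\equiv\gamma\equiv\pm1\pmod 4$ --- and the reciprocity conditions of Lemma \ref{linnik} force exactly that congruence on $\alpha,\beta,\gamma$. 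The paper avoids this by using the half-integer identity $f\left(\frac{x+y}{2}\right)+f\left(\frac{x-y}{2}\right)=\frac{ax}{8}y^2+2f\left(\frac{x}{2}\right)$, writing the six large summands as $f\left(\frac{\alpha p^j\pm x}{2}\right)$, etc., with $x\equiv y\equiv z\equiv p\pmod 2$; this extra $2$-adic flexibility (together with the condition $m\equiv 3\pmod 4$ for $p$ odd, or $4\mid m$ for $p=2$) is what makes the local analysis at $2$ go through. Your closing sentence defers the primes $2$ and $3$ to ``the spirit of Lemma \ref{p-adic},'' but that lemma is about surjectivity of $f$ on $\mathbb{Z}_p$, not about the mod-$16$ obstruction to the six-fold symmetric sum, so the deferral does not repair this.

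Second, the step where you must produce two admissible values $m,m'$ with different squarefree parts is not actually proved. You propose to exploit the ``abundance'' of values of a cubic polynomial $P_n$ at consecutive integers and the freedom to vary the power of $p$, but ruling out that all these values share one squarefree part amounts to bounding integral points on curves $cy^2=P_n(u)$ whose coefficients grow with $n$, and you give no argument. The paper's resolution is a simple device you are missing: introduce a \emph{second} auxiliary prime $t\ge 19$ (alongside $s$), and use Lemma \ref{watson} to choose $u$ and $u'$ in prescribed classes modulo $t$ so that $\left(\frac{m}{t}\right)=1$ while $\left(\frac{m'}{t}\right)=-1$. Since the Legendre symbol modulo $t$ depends only on the squarefree part (and neither $m$ nor $m'$ is divisible by $t$), this forces the squarefree parts to differ, and the effective half of Lemma \ref{linnik} then represents at least one of $m,m'$. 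Until these two points are filled in --- the half-integer identity with its parity bookkeeping, and the $t$-trick (or a genuine substitute) for the distinct-squarefree-parts requirement --- the proposal does not yield the theorem.
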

\begin{proof} We can write $f = a\frac{x^3-x}{6}+cx$ for $a,c\in \mathbb{Z}$ with $(a,c) = 1$. Note that we have
\[
f\left(\frac{x+y}{2}\right) + f\left(\frac{x-y}{2}\right) = \frac{ax}{8}y^2 + 2f\left(\frac{x}{2}\right).
\]
Let $p$ be the smallest prime such that $f$ is surjective when considered as a function $\mathbb{Z}_p \rightarrow \mathbb{Z}_p$, as in Lemma \ref{p-adic} (so if $p \ne 2$ then $a \equiv 2 \pmod{4}$). Fix two distinct primes $s, t$ greater than or equal to $19$, not equal to $p$, and not dividing $a$ or $6c-a$. We will attempt to find, for every $n$, a solution to the following Diophantine equation:
\[
4n - 8f\left(\frac{\alpha p^{j}}{2}\right) - 8f\left(\frac{\beta p^{j}}{2}\right) - 8f\left(\frac{\gamma p^{j}}{2}\right) - 4f(u) = \frac{ap^{j}}{2}(\alpha x^2 + \beta y^2 + \gamma z^2),
\]
with $\alpha, \beta, \gamma$ coming from a fixed finite set of values relatively prime to $s, t$ and depending only on $f$ and $p$, and $u$ chosen less than $6astp^j$ to make the left hand side satisfy various congruence conditions modulo $8, a, p^{j}, s$, and $t$. If we can achieve this, then we can write
\begin{align*}
n =\; &\frac{ap^{j}}{8}\left(\alpha x^2 + \beta y^2 + \gamma z^2\right) + 2f\left(\frac{\alpha p^{j}}{2}\right) + 2f\left(\frac{\beta p^{j}}{2}\right) + 2f\left(\frac{\gamma p^{j}}{2}\right) + f(u)\\
=\; &f\left(\frac{\alpha p^{j} + x}{2}\right) + f\left(\frac{\alpha p^{j} - x}{2}\right) + f\left(\frac{\beta p^{j} + y}{2}\right) + f\left(\frac{\beta p^{j} - y}{2}\right)\\
&+ f\left(\frac{\gamma p^{j} + z}{2}\right) + f\left(\frac{\gamma p^{j} - z}{2}\right) + f(u).
\end{align*}

Using Lemma \ref{power}, choose $D_0$ so large that for every pair of distinct odd primes $\pi,\rho \le 17$, and for any $D > D_0$, there exist positive integers $x,y$ such that
\[
D < \pi^{2x}\rho^{2y} < 1.01D.
\]
In particular, for this choice of $D_0$, for any $D > D_0/\rho$ there also exist positive integers $x',y'$ such that
\[
D\rho < \pi^{2x'}\rho^{2y'} < 1.01D\rho,
\]
or equivalently,
\[
D < \pi^{2x'}\rho^{2y'-1} < 1.01D.
\]
Now we fix a finite collection of real numbers $D_i > \max(D_0,12ast)$, say $i = 1, ..., I$, such that for every $x > 0$ there exists an $1 \le i \le I$ and an integer $j$ satisfying
\[
\frac{3}{2}D_i^3+1 < \frac{x}{p^{3j}} < 2D_i^3-1.
\]
For instance, we could take $I = \lceil 11\log(p)\rceil$ and $D_i = p^{\frac{i}{I}}\max(D_0,12ast)$. To see that this works, note first that for each $i$ we have $D_i > 12$, so from
\[
1-\frac{3}{4}p^{\frac{3}{I}} > 1-\frac{3}{4}e^{\frac{3}{11}} = 0.0148... > \frac{1}{D_i^3}
\]
we see that
\[
D_i^3-\frac{3}{4}D_{i+1}^3 > 1,
\]
or equivalently
\[
\frac{3}{2}D_{i+1}^3 + 1 < 2D_i^3-1.
\]
Similarly one can check that
\[
p^3\Big(\frac{3}{2}D_1^3 + 1\Big) < 2D_I^3-1.
\]
Thus, if $j$ is chosen to be as large as possible such that $\frac{x}{p^{3j}} > \frac{3}{2}D_1^3 + 1$, then $\frac{x}{p^{3j}}$ lies in the open interval $(\frac{3}{2}D_1^3 + 1, 2D_I^3-1)$, and the open interval $(\frac{3}{2}D_1^3 + 1, 2D_I^3-1)$ is covered by the intervals $(\frac{3}{2}D_i^3+1, 2D_i^3-1)$ since each one overlaps with the next.

We now construct a family of quadratic forms $\alpha_{i}x^2 + \beta_{i}y^2 + \gamma_{i}z^2$ satisfying the conditions of Lemma \ref{linnik}, for $1 \le i \le I$, such that
\begin{align}\label{sizes}
D_i < \alpha_{i} < 1.01D_i < \beta_{i}, \gamma_{i} < 1.03D_i.
\end{align}
To do so, we take $\alpha_{i} = 3^{\rm even}5^{\rm even}, \beta_{i} = 7^{\rm even}17^{\rm odd}, \gamma_{i} = 11^{\rm even}13^{\rm odd}$, where the exponents are positive integers chosen to satisfy \eqref{sizes} (such exponents exist by the choice of $D_0$, the case when one exponent is odd follows from the comment following the definition of $D_0$). These satisfy the conditions of Lemma \ref{linnik} since
\[
\left(\frac{-13\cdot 17}{3}\right) = \left(\frac{-13\cdot 17}{5}\right) = \left(\frac{-13}{7}\right) = \left(\frac{-13}{17}\right) = \left(\frac{-17}{11}\right) = \left(\frac{-17}{13}\right) = 1.
\]

Choose $1\le i \le I$ and an integer $j$ such that
\[
\frac{3}{2}D_i^3 + 1 < \frac{8n}{ap^{3j}} < 2D_i^3 - 1.
\]
For $n$ sufficiently large, this $j$ is positive and by \eqref{sizes} satisfies the inequalities
\[
f(6astp^{j}) + ap^{2j} < n - 2f\left(\frac{\alpha_{i} p^{j}}{2}\right) - 2f\left(\frac{\beta_{i} p^{j}}{2}\right) - 2f\left(\frac{\gamma_{i} p^{j}}{2}\right) < \frac{a\alpha_{i}^3p^{3j}}{8}.
\]
From now on write $\alpha = \alpha_{i}, \beta = \beta_{i}, \gamma = \gamma_{i}$.

Write
\[
d = 4n - 8f\left(\frac{\alpha p^{j}}{2}\right) - 8f\left(\frac{\beta p^{j}}{2}\right) - 8f\left(\frac{\gamma p^{j}}{2}\right).
\]
Note that $d$ is an integer, and that $d \equiv \frac{ap^{3j}}{6} \pmod{4}$ since $\alpha \equiv \beta \equiv \gamma \equiv 1 \pmod{4}$. We want to show that we can choose $u$ such that $\frac{d-4f(u)}{\frac{1}{2}ap^{j}}$ is an integer represented by the quadratic form $\alpha x^2 + \beta y^2 + \gamma z^2$. To do so, we need to choose $u$ satisfying several congruence conditions, and we will now check that they can be satisfied.

The congruence
\[
8f(u) \equiv 2d \pmod{a}
\]
has a solution since $f(u) \equiv cu \pmod{a}$, $(a,c) = 1$, and $(8,a)\mid 2d$. Further, if $p^e\mid\mid a$, the congruence
\[
8f(u) \equiv 2d \pmod{p^{j+e}}
\]
has a solution since $f$ is surjective mod every power of $p$ and since $4\mid d$ if $p = 2$. Together, these imply that the congruence
\[
8f(u) \equiv 2d\pmod{ap^{j}}
\]
has a solution, with $u$ determined modulo $ap^j$ if $p\mid a$, and $u$ determined modulo $ap^{j+1}$ otherwise (in which case $p \le 3$).

Next we check that there is a solution to the congruence
\[
-\alpha\beta\gamma y^2 \equiv \frac{d - 4f(u)}{\frac{1}{2}ap^{j}} \pmod{s}
\]
with $y \not\equiv 0 \pmod{s}$: the number of nonzero squares modulo $s$ is $\frac{s-1}{2}$, and since $s\nmid a(6b-a)$ the number of distinct values of $f$ modulo $s$ is $\lfloor\frac{2s+1}{3}\rfloor$ by Lemma \ref{watson}, so it is enough to check that
\[
\frac{s-1}{2} + \left\lfloor\frac{2s+1}{3}\right\rfloor \ge s+1,
\]
and this clearly holds for primes $s$ greater than or equal to $19$. Similarly, for any given value among $1, -1$ we can choose $u$ modulo $t$ such that $\left(\frac{d-4f(u)}{t}\right)$ takes that value.

Finally, we need to check that we can also solve the congruence
\[
\alpha\beta\gamma\frac{d-4f(u)}{\frac{1}{2}ap^{j}} \equiv 3 \pmod{8}
\]
when $p \ne 2$, or the congruence
\[
\alpha\beta\gamma\frac{d-4f(u)}{a2^{j+1}} \equiv 1, 2 \pmod{4}
\]
when $p$ is $2$. If $p \ne 2$, then $\frac{1}{2}ap^j$ is odd and $\alpha\beta\gamma\frac{d}{\frac{1}{2}ap^{j}} \equiv 3 \pmod{4}$, so we can solve the congruence modulo $8$ since $f$ takes both even and odd values. If $p$ is $2$, then we can solve the congruence modulo $4$ since $4\mid d$, $4\mid a2^{j+1}$, and $f$ is surjective as a function from $\mathbb{Z}_2 \rightarrow \mathbb{Z}_2$, and $u$ is determined modulo $2^{j+2}$ if $2\nmid a$, or determined modulo $2^{j+e+1}$ if $2^e\mid\mid a$, $e \ge 2$.

Thus we can choose a number $u$ between $0$ and $6astp^{j}$ such that $d-4f(u)$ is a multiple of $\frac{1}{2}ap^{j}$, and such that if we write
\[
m = \frac{d-4f(u)}{\frac{1}{2}ap^{j}}
\]
then we have $\left(\frac{-\alpha\beta\gamma m}{s}\right) = 1$, $\left(\frac{m}{t}\right) = 1$ and $\frac{\alpha\beta\gamma m}{(2,p)^2}$ is primitively representable as a sum of three squares. Similarly we can choose $u'$ between $0$ and $6astp^{j}$ such that $m' = \frac{d-4f(u')}{\frac{1}{2}ap^{j}}$ has all the same properties, except $\left(\frac{m'}{t}\right) = -1$.

From the bounds on $p^{j}$, we have $p^{j} < m, m' < \alpha^3p^{2j}$. Since $\left(\frac{m}{t}\right) = 1, \left(\frac{m'}{t}\right) = -1$, $m$ and $m'$ have different squarefree parts. Thus by Lemma \ref{linnik}, for $p^{j}$ larger than an effectively computable constant at least one of $\frac{m}{(2,p)^2}, \frac{m'}{(2,p)^2}$ can be primitively represented by the quadratic form $\alpha x^2 + \beta y^2 + \gamma z^2$. Assume without loss of generality that $m$ is representable, say
\[
m = \alpha x^2 + \beta y^2 + \gamma z^2.
\]
Note that since $m \equiv 3 \pmod{4}$ if $p \ne 2$ and $4 \mid m$ if $p = 2$, we must have $x \equiv y \equiv z \equiv p \pmod{2}$. From the upper bound on $m$, we have $x,y,z < \alpha p^{j}$. Thus we can write

\begin{align*}
n =\; &f\left(\frac{\alpha p^{j} + x}{2}\right) + f\left(\frac{\alpha p^{j} - x}{2}\right) + f\left(\frac{\beta p^{j} + y}{2}\right) + f\left(\frac{\beta p^{j} - y}{2}\right)\\
&+ f\left(\frac{\gamma p^{j} + z}{2}\right) + f\left(\frac{\gamma p^{j} - z}{2}\right) + f(u).\qedhere
\end{align*}
\end{proof}

\begin{rem} For most odd cubic polynomials of the form $f(x) = a\frac{x^3-x}{6} + cx$, it is enough to exploit the identity
\[
f(x+y) + f(x-y) = axy^2 + 2f(x).
\]
There is a case where this doesn't work. Let $f(x) = \frac{x^3-4x}{3}$. Then $f(x) \equiv 0\pmod{16}$ whenever $x$ is even, and further for any $k, y$ we have
\[
f(4k+1+y) + f(4k+1-y) \equiv 0, -2, 6 \pmod{16}
\]
and
\[
f(4k+3+y) + f(4k+3-y) \equiv 0, 2, -6 \pmod{16}.
\]
Thus if $n \equiv 8 \pmod{16}$, the congruence
\[
n\equiv f(\alpha p^j+x) + f(\alpha p^j-x) + f(\beta p^j+y) + f(\beta p^j-y) + f(\gamma p^j+z) + f(\gamma p^j-z) + f(u) \pmod{16}
\]
has no solutions for $p$ odd and $\alpha \equiv \beta \equiv \gamma \equiv \pm 1 \pmod{4}$.
\end{rem}

\section{General cubic polynomials}

\begin{lem}\label{abc} Let $a,b$ be relatively prime integers, $a$ positive. There exist six primes $p_1, ..., p_6$ depending only on $a,b$ such that for any $\epsilon > 0$ there is a $D_0$ such that for all $D > D_0$ there exist $\alpha,\beta,\gamma$ supported on $p_1, ..., p_6$ (a number is supported on a set of primes if all of its prime factors lie in that set) satisfying the conditions of Lemma \ref{linnik} as well as
\[
\alpha \equiv \beta \equiv \gamma \equiv b \pmod{a}
\]
and
\[
D < \alpha < (1+\epsilon)D < \beta, \gamma < (1+\epsilon)^2D.
\]
\end{lem}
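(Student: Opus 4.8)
The plan is to choose the six primes so that each of $\alpha,\beta,\gamma$ is a product of two of them with a prescribed exponent pattern, to use Lemma~\ref{power} to place them in the required windows, and to force the residue conditions of Lemma~\ref{linnik} by imposing quadratic reciprocity constraints on the six primes when they are selected. Concretely, I would look for $\alpha = p_1^{e_1}p_2^{2e_2}$, $\beta = p_3^{f_1}p_4^{2f_2}$, $\gamma = p_5^{g_1}p_6^{2g_2}$, with $p_1,\dots,p_6$ distinct odd primes, with $p_1,p_3,p_5\equiv b\pmod a$ (available by Dirichlet's theorem since $\gcd(a,b)=1$), with $p_2,p_4,p_6$ coprime to $a$, and with the exponents $e_1,f_1,g_1$ odd; since $p_1,\dots,p_6$ are distinct and odd, $\alpha,\beta,\gamma$ are then odd and pairwise coprime. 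To arrange $\alpha\equiv b\pmod a$ it suffices to take $e_1\equiv 1\pmod{L_1}$, where $L_1=\mathrm{lcm}(2,\mathrm{ord}_a(b))$ (this also makes $e_1$ odd), and $e_2$ a multiple of the least $N$ with $p_2^{2N}\equiv 1\pmod a$; the admissible $\alpha$ then include $p_1\cdot(p_1^{L_1})^k(p_2^{2N})^m$ for $k,m\ge 1$, and since $p_1^{L_1}$ and $p_2^{2N}$ are multiplicatively independent, Lemma~\ref{power} gives, for each $\epsilon>0$ and all sufficiently large $D$, such an $\alpha$ with $D<\alpha<(1+\epsilon)D$; the same construction places $\beta$ and $\gamma$ anywhere in $((1+\epsilon)D,(1+\epsilon)^2D)$. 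Letting $D_0$ be the largest of the finitely many constants coming from these applications of Lemma~\ref{power} handles the size requirements and the congruences modulo $a$ simultaneously.

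For the conditions of Lemma~\ref{linnik}: since $p_1,p_2$ are odd and coprime to $\beta\gamma$, the Chinese remainder theorem together with Hensel's lemma shows that ``$-\beta\gamma$ is a square mod $\alpha$'' is equivalent to $\left(\frac{-\beta\gamma}{p_1}\right)=\left(\frac{-\beta\gamma}{p_2}\right)=1$. Expanding these Jacobi symbols and using that $p_2,p_4,p_6$ occur to even powers while $p_3,p_5$ occur to odd powers in $\beta\gamma$, the requirement becomes $\left(\frac{-1}{p_1}\right)\left(\frac{p_3}{p_1}\right)\left(\frac{p_5}{p_1}\right)=1$ and $\left(\frac{-1}{p_2}\right)\left(\frac{p_3}{p_2}\right)\left(\frac{p_5}{p_2}\right)=1$, independently of the actual exponents. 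The two analogous square conditions reduce the same way, producing six Legendre-symbol identities, one ``extra'' condition attached to each $p_i$. The conditions attached to $p_2,p_4,p_6$ say respectively that $-p_3p_5$, $-p_1p_5$, $-p_1p_3$ is a quadratic residue modulo that prime; since these numbers are not squares, such primes exist (coprime to $a$, odd, and distinct from everything already chosen) once $p_1,p_3,p_5$ are fixed. Writing $\delta=\left(\frac{-1}{p_1}\right)$ and using that $p_1\equiv p_3\equiv p_5\pmod 4$ --- forced when $4\mid a$, and imposed directly otherwise --- quadratic reciprocity collapses the three conditions attached to $p_1,p_3,p_5$ to $\left(\frac{p_1}{p_3}\right)=\left(\frac{p_3}{p_5}\right)=1$ and $\left(\frac{p_1}{p_5}\right)=\delta$, any two of which force the third. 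These are met by choosing $p_1$, then $p_3$, then $p_5$ in turn by Dirichlet's theorem, each new prime being constrained modulo the primes chosen before it.

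The step I expect to be the main obstacle is the reciprocity bookkeeping just described: one has to verify that putting even exponents on $p_2,p_4,p_6$ really does make all six square conditions independent of the uncontrolled exponents, and that the coupled triple of conditions on $p_1,p_3,p_5$ is consistent, with the sign $\delta$ tracked correctly through reciprocity. The delicate case is $4\mid a$ with $b\equiv 3\pmod 4$: this forces $p_1\equiv p_3\equiv p_5\equiv 3\pmod 4$ and so rules out the naive choice of all six primes $\equiv 1\pmod 4$, and it is precisely the remark following Lemma~\ref{linnik} (that the conditions of that lemma force $\alpha\equiv\beta\equiv\gamma\pmod 4$) that keeps the system solvable. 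Everything else --- existence of each prime with its prescribed congruences by Dirichlet, the size estimates by Lemma~\ref{power}, and oddness and pairwise coprimality --- is routine.
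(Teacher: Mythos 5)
Your proposal is correct and follows essentially the same route as the paper: each of $\alpha,\beta,\gamma$ is built as a product of two dedicated primes, one to an even and one to an odd exponent, the six primes are chosen by Dirichlet in residue classes arranged via quadratic reciprocity so that the three Jacobi-symbol conditions hold independently of the (parity-constrained) exponents, and Lemma \ref{power} places each number in its window while exponent congruences preserve $\alpha\equiv\beta\equiv\gamma\equiv b\pmod a$. The paper streamlines the bookkeeping by first reducing to the case $4\mid a$ with $b$ odd, taking the even-exponent primes $\equiv 1\pmod a$ and all exponent increments to be multiples of $\varphi(a)$, but this differs from your $\mathrm{ord}_a(b)$ normalization and selection order only cosmetically.
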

\begin{proof} Assume without loss of generality that $b$ is odd and $4$ divides $a$. Start by picking distinct $p_1, p_2, p_3$ congruent to $1$ modulo $a$. Next choose $p_4, p_5, p_6$ congruent to $b$ modulo $a$ such that all three are quadratic residues modulo each of $p_1, p_2, p_3$, such that $p_5$ is a quadratic residue modulo $p_4$, $p_6$ is a quadratic residue modulo $p_5$, and $\left(\frac{p_6}{p_4}\right) \equiv b \pmod{4}$. Then by the laws of quadratic reciprocity we have
\[
\left(\frac{-p_4p_5}{p_6}\right) = \left(\frac{-p_5p_6}{p_4}\right) = \left(\frac{-p_6p_4}{p_5}\right) = 1. 
\]
Thus for any positive integers $k_1, ..., k_6$, $\alpha = p_1^{\varphi(a)k_1}p_4^{1+\varphi(a)k_4}, \beta = p_2^{\varphi(a)k_2}p_5^{1+\varphi(a)k_5},$ and $\gamma = p_3^{\varphi(a)k_3}p_6^{1+\varphi(a)k_6}$ satisfy the conditions of Lemma \ref{linnik}. Now we apply Lemma \ref{power} to finish.
\end{proof}

\begin{thm}\label{gencubic} Let $f(x) = a\frac{x^3-x}{6} + b\frac{x^2-x}{2} + cx$ with $a > 0$ and $\gcd(a,b,c) = 1$. Suppose there is a prime $p$ such that $f$ is surjective as a function on $\mathbb{Z}_p$, and such that $v_p(a) \le v_p(2b)$. Then every number $n$ greater than an effectively computable constant depending only on $f$ can be written as a sum of seven positive values of $f$.
\end{thm}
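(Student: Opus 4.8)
The plan is to imitate the proof of Theorem~\ref{oddcubic} step by step, with Lemma~\ref{abc} supplying the ternary forms in place of the explicit forms supported on $\{3,5,7,11,13,17\}$ used there. Everything is built from the polynomial identity
\[
f(A+B)+f(A-B)\;=\;(aA+b)B^2+2f(A),
\]
valid for all rationals $A,B$, together with its halved version $f\!\bigl(\tfrac{X+Y}{2}\bigr)+f\!\bigl(\tfrac{X-Y}{2}\bigr)=\tfrac{aX+2b}{8}Y^2+2f\!\bigl(\tfrac X2\bigr)$, which---as with $\tfrac{x^3-4x}{3}$ in Theorem~\ref{oddcubic}---supplies the extra control modulo powers of $2$ needed in certain cases, and which for $p=2$ is forced exactly when $v_2(a)=v_2(b)+1$ in order to extract an extra power of $2$ from the quadratic coefficient. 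Write $g=\gcd(a,b)$, $a=ga^{*}$, $b=gb^{*}$ with $\gcd(a^{*},b^{*})=1$; since $\gcd(a,b,c)=1$ we have $\gcd(g,c)=1$, and we let $g_1$ denote the prime-to-$p$ part of $g$.

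Fix auxiliary primes $s,t\ge 19$ distinct from $p$ and from the primes arising in Lemma~\ref{abc}, avoiding the finitely many primes dividing a suitable discriminant of $f$, so that (by Lemma~\ref{watson}) $f$ takes $\lfloor\tfrac{2s+1}{3}\rfloor$ values modulo $s$ and $\lfloor\tfrac{2t+1}{3}\rfloor$ modulo $t$. Using Lemma~\ref{power}, fix a finite list of scales $D_1,\dots,D_I$ so that for every $x>0$ some interval $(c_1D_i^3,\,c_2D_i^3)$---with $c_1<c_2$ the explicit constants that make the size inequalities below work, as in Theorem~\ref{oddcubic}---contains $\tfrac{a^2 x}{g_1^3\,p^{3j}}$ for an integer $j$. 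For each $i$ and each residue of $j$ modulo the multiplicative order of $p$ mod $a^{*}$, apply Lemma~\ref{abc} with a suitable multiple of $a^{*}$ as modulus (so that residues modulo $8$ are pinned down as well) to get a diagonal form $\alpha X^2+\beta Y^2+\gamma Z^2$ satisfying the hypotheses of Lemma~\ref{linnik}, coprime to $2st$, with $\alpha,\beta,\gamma$ all within a factor $1+\epsilon$ of $D_i$, and congruent modulo $a^{*}$ to $b^{*}p^{-(j-v_p(a))}$; this residue class is well defined and coprime to $a^{*}$ because $\gcd(a^{*},b^{*})=1$ and, thanks to $v_p(a)\le v_p(2b)$, because $p$ is a unit modulo $a^{*}$. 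Only finitely many forms arise.

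Given a large $n$, choose $i$ and $j$ so that $\tfrac{a^2 n}{g_1^3 p^{3j}}\in(c_1D_i^3,c_2D_i^3)$ (so $p^j$ is of order $n^{1/3}$) and fix the corresponding form. Put $A_1=\tfrac{g_1p^j\alpha-b}{a}$ and define $A_2,A_3$ likewise from $\beta,\gamma$; the imposed congruence makes these integers, positive of size $\asymp\tfrac{g_1\alpha}{a}p^j$, with $aA_i+b=g_1p^j\alpha_i$ where $\alpha_i\in\{\alpha,\beta,\gamma\}$. Setting $d=n-2f(A_1)-2f(A_2)-2f(A_3)$, we seek $u$ with $0\le u<6astp^j$ so that $m:=\tfrac{d-f(u)}{g_1p^j}$ is a nonnegative integer for which $m/(2,p)^2$ is primitively represented by $\alpha X^2+\beta Y^2+\gamma Z^2$. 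The congruence $f(u)\equiv d\pmod{g_1p^j}$ is solvable because $f$ is surjective modulo every power of $p$ and, modulo $g_1$, $f(u)\equiv cu$ with $\gcd(c,g_1)=1$; the conditions $\bigl(\tfrac{-\alpha\beta\gamma m}{s}\bigr)=1$, a prescribed value of $\bigl(\tfrac mt\bigr)$, $m/(2,p)^2\not\equiv 0\pmod 4$, and $\alpha\beta\gamma m/(2,p)^2$ primitively a sum of three squares are arranged by choosing $u$ suitably modulo $s$, $t$ and $8$, using the counting estimate from the proof of Theorem~\ref{oddcubic} and the fact---immediate from $\gcd(a,b,c)=1$---that $f$ takes at least two values modulo $2$. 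Taking $u,u'$ with $\bigl(\tfrac mt\bigr)=1$ and $\bigl(\tfrac{m'}{t}\bigr)=-1$ gives $m,m'$ of different squarefree parts, both large and (by the choice of $j$) small enough that any form variables solving $m=\alpha X^2+\beta Y^2+\gamma Z^2$ satisfy $X<A_1$, $Y<A_2$, $Z<A_3$ with margin; by Lemma~\ref{linnik} one of them, say $m$, admits such a representation, and since the variables are comfortably below the $A_i$ the arguments $A_i\pm(\text{the corresponding variable})$ exceed any fixed $x_0(f)$, so $f$ is positive there. Substituting into the identity exhibits $n$ as a sum of seven positive values of $f$.

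The main obstacle, exactly as in Theorem~\ref{oddcubic}, is the congruence bookkeeping: one must prescribe the residues of $\alpha,\beta,\gamma$ (modulo $a^{*}$ and modulo a small power of $2$) so that the $A_i$ come out integral and Lemma~\ref{linnik} applies---respecting the forced relation $\alpha\equiv\beta\equiv\gamma\pmod 4$ and the sum-of-three-squares condition on $\alpha\beta\gamma m/(2,p)^2$---and then pick $u$ so that $m$ has all the properties above, all while the factor $g_1p^j$ pulled out of the coefficients $aA_i+b$ stays of the shape (power of $p$) times (divisor of $g$ coprime to $c$), modulo which $f$ is automatically surjective. The hypothesis $v_p(a)\le v_p(2b)$ is precisely the input that makes this extraction possible and keeps the prescribed residue for $\alpha,\beta,\gamma$ a unit modulo $a^{*}$; when $p=2$ and $v_2(a)=v_2(b)+1$ the extraction fails for the plain identity, and one passes to the halved identity, choosing the centers so as to absorb one extra power of $2$ while keeping $f$ surjective modulo the resulting modulus.
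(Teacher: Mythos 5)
Your proposal reproduces the paper's strategy in all structural respects: the (halved) identity $f(\tfrac{X+Y}{2})+f(\tfrac{X-Y}{2})=\tfrac{aX+2b}{8}Y^2+2f(\tfrac{X}{2})$, centers chosen in a residue class modulo $a^*$ so that the quadratic coefficient becomes a unit times (a divisor of $\gcd(a,2b)$) times a power of $p$, forms supplied by Lemma \ref{abc}, the congruence for $u$ modulo the coefficient, $s$ and $t$, and the two candidates $m,m'$ with opposite values of $\left(\frac{\cdot}{t}\right)$ fed into Lemma \ref{linnik}. (The paper takes $q$ a power of $p$ with $q\equiv 1\pmod{a''}$ and uses the halved identity with $g=\gcd(a,2b)$ uniformly, rather than your plain identity with $g=\gcd(a,b)$ plus a case switch and a residue class varying with $j$; these are cosmetic differences.)

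The genuine gap is the $2$-adic bookkeeping, which you correctly identify as ``the main obstacle'' but then dispose of with the assertion that $f$ takes at least two values modulo $2$. That is not sufficient. To apply Lemma \ref{linnik} one must arrange that $m\not\equiv 0\pmod 4$ and that $\alpha\beta\gamma m\not\equiv 0,4,7\pmod 8$ (or that $m=4m_0$ with $m_0$ admissible), and the only handle on $m\bmod 8$ is the value set of $f(u)$ in $\mathbb{Z}_2$. When $f$ is \emph{not} surjective on $\mathbb{Z}_2$ --- a case your hypotheses permit, e.g.\ $4\mid a$, $b\equiv 2\pmod 4$, $c$ odd --- the value of $f(u)$ modulo $4$ is determined by $u$ modulo $2$, and $u\bmod 2$ is already spent on other conditions, so one cannot steer $m$ into the required class by varying $u$ $2$-adically; the Remark after Theorem \ref{oddcubic} (the polynomial $\tfrac{x^3-4x}{3}$) shows that without further devices the target congruence can be genuinely unsolvable. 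The paper's resolution occupies most of its proof: it introduces $b''$ (possibly $\ne b'$, with $a''=4a'$) to shift the centers by a half-integer, which forces $x,y,z$ into a prescribed common parity and, via $\alpha+\beta+\gamma\equiv 3\alpha\beta\gamma\pmod 8$, locks $m\bmod 4$ compatibly with $d\bmod 4$; the residual freedom modulo $8$ (or modulo $16$ in the even-parity case) is then extracted from an explicit verification that $f(u)$ takes at least two values modulo $16$ as $u$ ranges over a fixed parity class (via the values $f(2),f(4)$ and $f(1),f(-1),f(3)$). Your proposal contains no substitute for this mechanism, so as written the construction of an admissible $m$ can fail at the prime $2$.
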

\begin{proof} Note that we have
\[
f\left(\frac{x+y}{2}\right) + f\left(\frac{x-y}{2}\right) = \frac{ax+2b}{8}y^2 + 2f\left(\frac{x}{2}\right).
\]
Let $p$ be the smallest prime as in the theorem statement (so if $p \ne 2$ then either $\gcd(a,b) \equiv 2 \pmod{4}$ or $v_2(a) \ge v_2(4b)$). Set $g = \gcd(a,2b)$, and let $a' = \frac{a}{g}, b' = \frac{2b}{g}$. Define $a'', b''$ as follows: if $b'$ is even or $f:\mathbb{Z}_2\rightarrow \mathbb{Z}_2$ is surjective, then $a'' = a', b'' = b'$, otherwise $a'' = 4a'$, $b'' \equiv b' \pmod{2a'}$, $b''$ chosen mod $2a$ such that
\[
n-f(n)-6f\left(\frac{b''-b'}{2a'}\right) \equiv 2 \pmod{4},
\]
which we can achieve since in this case $a,b$ are even (by Lemma \ref{p-adic} and the assumption that $f:\mathbb{Z}_2\rightarrow \mathbb{Z}_2$ is not surjective) and $c$ is odd, so $n - f(n) \equiv 0\pmod{2}$ automatically.

Find primes $p_1, ..., p_6$ as in Lemma \ref{abc} applied to $a'',b''$. Let $q$ be a power of $p$ which is congruent to $1$ modulo $a''$ (this exists since by assumption $a'$ and $p$ are relatively prime). Fix two distinct primes $s, t$ congruent to $5$ modulo $6$, distinct from $p_1, ..., p_6$, not equal to $p$, and not dividing $a$. We will attempt to find, for every $n$, a solution to the following Diophantine equation:
\[
4n - 8f\left(\frac{\alpha q^j-b'}{2a'}\right) - 8f\left(\frac{\beta q^j-b'}{2a'}\right) - 8f\left(\frac{\gamma q^j-b'}{2a'}\right) - 4f(u) = \frac{gq^{j}}{2}(\alpha x^2 + \beta y^2 + \gamma z^2),
\]
with $\alpha, \beta, \gamma$ coming from a fixed finite set of values supported on $p_1, ..., p_6$ and depending only on $f$ and $p$, $x,y,z$ all of the same parity as $\frac{b''-b'}{a'}$, and $u$ chosen less than $12gstq^j$ to make the left hand side satisfy various congruence conditions modulo $8g, q^{j}, s$, and $t$. If we can achieve this, then we can write
\begin{align*}
n =\; &\frac{gq^{j}}{8}\left(\alpha x^2 + \beta y^2 + \gamma z^2\right) + 2f\left(\frac{\alpha q^j-b'}{2a'}\right) + 2f\left(\frac{\beta q^j-b'}{2a'}\right) + 2f\left(\frac{\gamma q^j-b'}{2a'}\right) + f(u)\\
=\; &f\left(\frac{\alpha q^{j}-b'}{2a'}+\frac{x}{2}\right) + f\left(\frac{\alpha q^{j}-b'}{2a'}-\frac{x}{2}\right) + f\left(\frac{\beta q^{j}-b'}{2a'}+\frac{y}{2}\right) + f\left(\frac{\beta q^{j}-b'}{2a'}-\frac{y}{2}\right)\\ &+ f\left(\frac{\gamma q^{j}-b'}{2a'}+\frac{z}{2}\right) + f\left(\frac{\gamma q^{j}-b'}{2a'}-\frac{z}{2}\right) + f(u).
\end{align*}

Write
\[
d = 4n - 8f\left(\frac{\alpha q^j-b'}{2a'}\right) - 8f\left(\frac{\beta q^j-b'}{2a'}\right) - 8f\left(\frac{\gamma q^j-b'}{2a'}\right).
\]
Note that if $\frac{b''-b'}{a'}$ is odd, then
\begin{align*}
d - \frac{gq^j}{2}(\alpha+\beta+\gamma) = 4n-4\Bigg(&f\left(\frac{\alpha q^{j}-b'}{2a'}+\frac{1}{2}\right) + f\left(\frac{\alpha q^{j}-b'}{2a'}-\frac{1}{2}\right) + f\left(\frac{\beta q^{j}-b'}{2a'}+\frac{1}{2}\right)\\
&+ f\left(\frac{\beta q^{j}-b'}{2a'}-\frac{1}{2}\right) + f\left(\frac{\gamma q^{j}-b'}{2a'}+\frac{1}{2}\right) + f\left(\frac{\gamma q^{j}-b'}{2a'}-\frac{1}{2}\right)\Bigg)
\end{align*}
is $4$ times an integer, so
\[
\alpha\beta\gamma d \equiv 3\frac{gq^j}{2} \pmod{4},
\]
while if $\frac{b''-b'}{a'}$ is even then $d \equiv 0 \pmod{4}$.

From here the argument is very similar to the proof of Theorem \ref{oddcubic}, with $g$ in the place of $a$ and $q$ in the place of $p$. The only part of the argument which does not directly generalize is showing that we may choose $u$ such that $\alpha\beta\gamma\frac{d-4f(u)}{\frac{1}{2}gq^j} \equiv 3 \pmod{8}$ if $\frac{b''-b'}{a'}$ is odd, or such that $\alpha\beta\gamma\frac{d-4f(u)}{\frac{1}{2}gq^j}$ is $4$ times something which can be primitively represented as a sum of three squares if $\frac{b''-b'}{a'}$ is even.

Suppose first that $f:\mathbb{Z}_2\rightarrow\mathbb{Z}_2$ is surjective. Then since $\alpha\beta\gamma(d-4f(u))$ automatically has the right congruence class modulo $4$, we may pick $u$ modulo the largest power of $2$ dividing $4g$ to make $\alpha\beta\gamma\frac{d-4f(u)}{\frac{1}{2}gq^j}$ either $3$ mod $8$ or $8$ mod $16$. Otherwise, we have $\gcd(a,b) \equiv 2 \pmod{4}$, so $v_2(g) \in \{1,2\}$, and $q$ is odd.

Now suppose $\frac{b''-b'}{a'}$ is odd. By the choice of $b''$, in this case we have $g\equiv 2\pmod{4}$ (if $g \equiv 4 \pmod{8}$ then $b'$ is odd). Then $\alpha\beta\gamma\frac{d}{\frac{1}{2}gq^j}$ is automatically $3$ modulo $4$, and since $f(u)$ takes both even and odd values we can ensure that $\alpha\beta\gamma\frac{d-4f(u)}{\frac{1}{2}gq^j} \equiv 3 \pmod{8}$.

Finally suppose $\frac{b''-b'}{a'}$ is even. Then $b'$ must be odd, so $g\equiv 4\pmod{8}$, $b \equiv 2\pmod{4}$ and $4\mid a$, so that the value of $f(x)$ mod $4$ is determined by $x$ mod $2$. In this case $d$ is automatically a multiple of $4$. By choosing $u$ to be congruent to $n$ mod $2$, we can ensure that $d-4f(u)$ is a multiple of $8$, and then by the choice of $b''$ we will have $d-4f(u) \equiv 8 \pmod{16}$. Thus $\alpha\beta\gamma\frac{d-4f(u)}{2gq^j}$ is odd, and we just need to choose $u$, congruent to $n$ mod $2$, such that $\alpha\beta\gamma\frac{d-4f(u)}{2gq^j}$ isn't $7$ mod $8$.

In order to finish we just need to show that if $4\mid a$, $b \equiv 2 \pmod{4}$, and $c$ is odd, then as $u$ varies over numbers congruent to $n$ mod $2$, $f(u)$ takes at least two distinct values mod $16$. If $n$ is even, then from $f(2) = a+b+2c, f(4) = 10a+6b+4c$ we see that $f(4) \equiv 2f(2)+8 \pmod{16}$, so that at least one of $f(2), f(4)$ is not zero mod $16$. If $n$ is odd, then from $f(1) = c, f(-1) = b-c$ we see that if $b \not\equiv 2c \pmod{16}$ then $f(1), f(-1)$ are different mod $16$, and otherwise from $f(3) = 4a+3b+3c$ we see $f(3) \equiv 3b+3c \equiv 9c \not\equiv c \equiv f(1) \pmod{16}$.
\end{proof}

\section{An explicit result for sums of seven octahedrals}

We will need one special case of Lemma \ref{linnik}.

\begin{repthm}{quadratic} If $m, m' > e^{6.6\cdot 10^6}$, $m, m' \equiv \pm 2 \pmod{5}$, $m\equiv 3\pmod{4}$, and $m' \equiv 2\pmod{4}$, then at least one of $m, m'$ can be primitively represented by the quadratic form $83x^2 + 91y^2 + 99z^2$.
\end{repthm}

We will give the proof of this theorem in the next section.

\begin{rem} The number $689469562$ is not represented by $83x^2 + 91y^2 + 99z^2$. On the other hand, a computer search shows that every number $m$ strictly between $689469562$ and $10^{10}$ which is congruent to $2$ or $3$ modulo $4$ and congruent to $2$ or $3$ modulo $5$ is primitively represented by the form $83x^2 + 91y^2 + 99z^2$.
\end{rem}

\begin{thm}\label{octahedral}
Every number $n$ greater than $e^{10^7}$ is a sum of seven positive octahedral numbers.
\end{thm}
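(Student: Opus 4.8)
The octahedral polynomial $f(x)=\frac{2x^{3}+x}{3}$ is the case $a=4,b=0,c=1$ of a general cubic, and by Corollary \ref{octa} it is surjective, hence a bijection, modulo every $2^{u}3^{v}$ (so in particular surjective onto $\mathbb{Z}_{2}$ and onto $\mathbb{Z}_{3}$). The plan is to run the argument of Theorem \ref{oddcubic}, but with the single prime power $p^{j}$ replaced by a modulus $q=2^{\mu}3^{\nu}$ — whose values are dense on the logarithmic scale by Lemma \ref{power} — and with the family of ternary forms replaced by the single form $83x^{2}+91y^{2}+99z^{2}$ of Theorem \ref{quadratic}. Because $83\equiv 91\equiv 99\equiv 3\pmod 4$, the halving identity is unavailable here: halving would force $x\equiv y\equiv z\pmod 2$, so the form value would be $\equiv 0$ or $1\pmod 4$, contradicting the congruences of Theorem \ref{quadratic}. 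Instead I use the plain identity $f(w+t)+f(w-t)=2f(w)+4wt^{2}$ and, for each large $n$, look for a representation
\[
n=f(83q+x)+f(83q-x)+f(91q+y)+f(91q-y)+f(99q+z)+f(99q-z)+f(u),
\]
with $q=2^{\mu}3^{\nu}$. Writing $\Sigma=f(83q)+f(91q)+f(99q)$, this amounts to finding $q$ and a small positive $u$ for which $m:=\frac{n-2\Sigma-f(u)}{4q}$ is a positive integer with $m<83^{3}q^{2}$ (which forces $|x|<83q$, $|y|<91q$, $|z|<99q$), with $m\equiv 3\pmod 4$, $m\equiv \pm 2\pmod 5$, and $m>e^{6.6\cdot 10^{6}}$; a second number $m'$ is produced the same way but with $m'\equiv 2\pmod 4$, and then Theorem \ref{quadratic} is invoked.

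To choose $q$: applying Lemma \ref{power} to the multiplicatively independent numbers $4$ and $9$ and then multiplying by $2$, for $n$ large we obtain $q=2^{\mu}3^{\nu}$ with $\mu+\nu$ odd — equivalently $q\equiv \pm 2\pmod 5$ — such that $2n/q^{3}$ lies in a fixed nonempty open interval $(R_{1},R_{2})$. One takes $R_{1}$ slightly above $2\bigl(\tfrac{4}{3}(83^{3}+91^{3}+99^{3})+\tfrac{2}{3}\cdot 80^{3}\bigr)\approx 6.8\cdot 10^{6}$, which keeps $n-2\Sigma-f(u)>0$ with room to spare, and $R_{2}$ slightly below $2\bigl(4\cdot 83^{3}+\tfrac{4}{3}(83^{3}+91^{3}+99^{3})\bigr)\approx 1.07\cdot 10^{7}$, which keeps $m<83^{3}q^{2}$. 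The crucial numerical point is that $R_{1}<R_{2}$, i.e.\ that $4\cdot 83^{3}>\tfrac{2}{3}\cdot 80^{3}$ with a comfortable margin; this is exactly why a form with coefficients of roughly this size must be used. For $n$ large such a $q$ exists and satisfies $q>e^{3.3\cdot 10^{6}}$, so $m,m'\asymp q^{2}$ automatically exceed $e^{6.6\cdot 10^{6}}$.

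To choose $u$ (and similarly $u'$): since $f$ is a bijection of $\mathbb{Z}/2^{a}3^{b}$ for every $a,b$, the congruence $f(u)\equiv n-2\Sigma-12q\pmod{16q}$ — which simultaneously makes $4q\mid n-2\Sigma-f(u)$ and $m\equiv 3\pmod 4$ — has a unique solution modulo $16q$ (there is no spurious factor of $3$ as in Corollary \ref{octa}, because we do not lift past the modulus $16q$), so its least positive solution is $\le 16q$. To get $m\equiv \pm 2\pmod 5$ we need $f(u)$ to hit one of two prescribed residues modulo $5$; by Lemma \ref{watson} $f$ takes exactly the three values $\{0,1,4\}$ modulo $5$, whose complement $\{2,3\}$ has common difference $1$, while the two prescribed residues differ by $4q\equiv \pm 2\pmod 5$ since $q\equiv \pm 2\pmod 5$, so at least one of them lies in $\{0,1,4\}$ and is attainable. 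Combining by the Chinese Remainder Theorem pins $u$ down modulo $80q$ up to at most two classes, so a positive $u\le 80q$ exists. Replacing $12q$ by $8q$ gives $u'$ with $m'\equiv 2\pmod 4$.

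Finally, $m\equiv 3\pmod 4$ is odd and $m'\equiv 2\pmod 4$ is twice an odd number, so $m$ and $m'$ have different squarefree parts; moreover $83\cdot 91\cdot 99\equiv 3\pmod 8$ makes $83\cdot 91\cdot 99\cdot m\equiv 1$ or $5\pmod 8$ and $83\cdot 91\cdot 99\cdot m'\equiv 2$ or $6\pmod 8$, both primitively representable as sums of three squares, while $m,m'\equiv \pm 2\pmod 5$ is exactly the Jacobi condition $\bigl(\tfrac{-83\cdot 91\cdot 99\cdot m}{5}\bigr)=1$. Since $m,m'>e^{6.6\cdot 10^{6}}$, Theorem \ref{quadratic} yields a primitive representation of one of them, say $m=83x^{2}+91y^{2}+99z^{2}$; then $m<83^{3}q^{2}$ gives $|x|<83q$, $|y|<91q$, $|z|<99q$, so all seven arguments in the displayed identity are positive integers and $n$ is a sum of seven positive octahedral numbers. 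The remaining work is bookkeeping — carrying the threshold of Lemma \ref{power} and the two inequalities on $2n/q^{3}$ through to a bound below $e^{10^{7}}$ — and the one genuine subtlety is the nonemptiness of $(R_{1},R_{2})$, which rests on the modulus of $u$ being $O(q)$ with a small constant, i.e.\ on $f$ being an honest bijection modulo $2^{a}3^{b}$ and on no prime beyond $5$ entering the construction.
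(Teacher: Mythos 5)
Your architecture matches the paper's proof almost exactly: the same identity $\alpha(w+t)+\alpha(w-t)=2\alpha(w)+4wt^{2}$ applied at $83q,91q,99q$, the same single modulus $q=2^{\mu}3^{\nu}$ forced into a window so that $2n/q^{3}\in(R_{1},R_{2})$, the same construction of $m\equiv 3\pmod 4$ and $m'\equiv 2\pmod 4$ with $m,m'\equiv\pm 2\pmod 5$, and the same appeal to Theorem \ref{quadratic} plus the observation that $m,m'$ have different squarefree parts. The gap sits exactly where you yourself locate the ``one genuine subtlety.'' The polynomial $f(x)=\frac{2x^{3}+x}{3}$ is \emph{not} a bijection of $\mathbb{Z}/2^{a}3^{b}$ for $b\ge 1$; it is not even well defined there, since $f(x+M)-f(x)\equiv\frac{M}{3}(2M^{2}+1)\equiv\frac{M}{3}\pmod{M}$ when $3\mid M$. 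Corollary \ref{octa} is stated the way it is precisely because of this: the congruence $f(u)\equiv A\pmod{16q}$ is solved by exactly three classes modulo $48q$, and the factor of $3$ you dismiss as spurious is not. Hence your claim that $u$ is determined uniquely modulo $16q$, and therefore that an admissible $u\le 80q$ exists after combining with the modulus $5$, is unjustified.

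This is not mere bookkeeping, because with the correct count your window closes. In the worst case the attainable target for $f(u)\bmod 5$ is $0$, which is realized by the single residue $u\equiv 0\pmod 5$; combined with three uncontrolled classes modulo $48q$ this only guarantees some admissible $u<240q$, and $f(240q)\approx\frac{2}{3}240^{3}q^{3}\approx 9.2\cdot 10^{6}q^{3}$ pushes $R_{1}$ far above your $R_{2}\approx 1.07\cdot 10^{7}$. Even in the favourable case of a target in $\{1,4\}$ (two residues of $u$ differing by $\pm 1$), your normalization $q\equiv\pm 2\pmod 5$ makes $48q\equiv\pm 1\pmod 5$, so the two admissible classes modulo $240q$ differ by $48q$ or $192q$ and the guaranteed bound is only $u\le 192q$; since $\frac{2}{3}192^{3}+3060876>4\cdot 83^{3}+3060876$, the interval $(R_{1},R_{2})$ is again empty. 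The paper threads the needle by making the opposite choice: it takes $T\equiv t\equiv\pm 1\pmod 5$ (reaching the correct residue inside the size window via $2^{-336}3^{212}\equiv 1\pmod 5$) and selects $t$ so that the target for $\alpha(u)\bmod 5$ lies in $\{1,-1\}$; then the two admissible classes modulo $240T$ differ by exactly $96T$, giving $u\le 144T$, and the inequalities $\frac{2}{3}144^{3}+3060876<172^{3}$ and $174^{3}<4\cdot 83^{3}+3060876$ hold with very little room to spare. Your mod-$5$ attainability argument (no two elements of $\{2,3\}$ differ by $\pm 2$) is correct as far as it goes, but it only controls the value of $f(u)$, not which residues of $u$ realize it, and the latter is what governs the size of $u$ and hence the nonemptiness of the window.
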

\begin{proof}
To save writing, we set $\alpha(x) = \frac{2x^3+x}{3}$. Note that for any $t$ we have
\[
n - 2\alpha(83t) - 2\alpha(91t) - 2\alpha(99t) = n - 3060876t^3 - 182t \equiv n-t^3-2t \mbox{ (mod }5).
\]
First find a congruence class $t \equiv \pm 1 \pmod{5}$ such that one of $n-t^3+t, n-t^3$ is congruent to one of $1,-1$ modulo $5$.

Now we claim that we may find a number $T$ of the form $2^a3^b$ which is congruent to $t$ modulo $5$ and satisfies the inequalities
\[
\alpha(144T) + 4e^{6.6\cdot 10^6}T < n-2\alpha(83T)-2\alpha(91T)-2\alpha(99T) < 4\cdot 83^3T^3.
\]
To do so, we note that these inequalities are implied by the inequalities
\[
\frac{\sqrt[3]{n}}{172} > T > \frac{\sqrt[3]{n}}{174}:
\]
if $T > \frac{\sqrt[3]{n}}{174}$ then we have
\[
(4\cdot 83^3 + 3060876)T^3 + 182T > 174^3T^3 > n,
\]
and from $T^2 > e^{\frac{2}{3}10^7-2\log(174)} > e^{6.6\cdot 10^6}$ and $\frac{\sqrt[3]{n}}{172} > T$ we have
\begin{align*}
\left(\frac{2}{3}144^3 + 3060876\right)T^3 + \left(4e^{6.6\cdot 10^6}+\frac{1}{3}144 + 182\right)T\\
 < \left(\frac{2}{3}144^3 + 3060876 + 5\right)T^3 < 172^3T^3 < n.
\end{align*}
Now we note that $2^{-336}3^{212} \equiv 1 \pmod{5}$ and $\frac{174}{172} > 2^{-336}3^{212} > 1.008$. To find a $T$ between $\frac{\sqrt[3]{n}}{174}$ and $\frac{\sqrt[3]{n}}{172}$ we start by choosing $a\in \{0,2\}$ such that $2^{a} \equiv t\pmod{5}$. Next we choose $k$ as large as possible such that $2^{4k+a}$ is smaller than $\frac{\sqrt[3]{n}}{172}$, so in particular $2^{4k+a} > \frac{\sqrt[3]{n}}{16\cdot 172}$ and $k > 10^6$. Next we pick $j$ such that
\[
T = 2^{4k+a-336j}3^{212j}
\]
is between $\frac{\sqrt[3]{n}}{174}$ and $\frac{\sqrt[3]{n}}{172}$. We will then necessarily have $j < \frac{\log(16)}{\log(1.008)} < 348$, so $4k+a-336j > 0$ and $T$ is an integer, congruent to $t$ modulo $5$.

Next find a number $u$ between $0$ and $144T$ satisfying the congruences
\[
\alpha(u) \in \{n-t^3+t, n-t^3\} \cap \{1,-1\} \pmod{5}
\]
and
\[
\alpha(u) \equiv n-2\alpha(83T)-2\alpha(91T)-2\alpha(99T)-12T \pmod{16T}.
\]
To see that this is possible, first note that for a fixed nonzero value of $\alpha(u)$ modulo $5$, there are two choices for $u$ modulo $5$, with difference congruent to $\pm 1$ modulo $5$: we have $\alpha(1)\equiv \alpha(2) \equiv 1\pmod{5}$ and $\alpha(3)\equiv \alpha(4)\equiv -1\pmod{5}$. 

Thus for a fixed value of $u$ modulo $48T$, determined by the congruence modulo $16T$ (using Corollary \ref{octa}), there are two possible solutions for $u$ modulo $240T$, with difference congruent to $\pm 96T$ modulo $240T$ (since $96T \equiv T \equiv t \equiv \pm 1\pmod{5}$), and so at least one of them is bounded by $240T-96T = 144T$. Similarly find $u'$ between $0$ and $144T$ such that $\alpha(u') \equiv \alpha(u) \pmod{5}$ and $\alpha(u') \equiv \alpha(u) + 4T \pmod{16T}$.

If we let
\begin{align*}
m &= \frac{n-2\alpha(83T)-2\alpha(91T)-2\alpha(99T)-\alpha(u)}{4T},\\
m' &= \frac{n-2\alpha(83T)-2\alpha(91T)-2\alpha(99T)-\alpha(u')}{4T},
\end{align*}
then we have $m\equiv \pm 2 \pmod{5}$, $m\equiv 3\pmod{4}$, $e^{6.6\cdot 10^6} < m < 83^3T^2$, and similarly for $m'$ except $m' \equiv 2 \pmod{4}$. By Theorem \ref{quadratic}, one of $m, m'$ is primitively represented by $83x^2+91y^2+99z^2$. Assume that $m$ can be primitively represented (the other case is similar). By the bounds on $m$, we have $x,y,z < 83T$. Thus we can write
\begin{align*}
n =& \; 4T(83x^2+91y^2+99z^2) + 2\alpha(83T)+2\alpha(91T)+2\alpha(99T)+\alpha(u)\\
=& \; \alpha(83T+x)+\alpha(83T-x) + \alpha(91T+y)+\alpha(91T-y)+\alpha(99T+z)+\alpha(99T-z)+\alpha(u).\qedhere
\end{align*}
\end{proof}

\section{Proof of Theorem \ref{quadratic}}

For the most part, this section follows Linnik's and Maly\v{s}ev's arguments from \cite{linnik-quaternions}. All of the constants in their argument are made explicit, and several of their bounds are sharpened as a result. Additionally, a minor correction is made to Maly\v{s}ev's correction of Note 13 (see Lemma \ref{note13}).

We will use the following notation: $\omega(n)$ is the number of distinct prime factors of $n$, $\Omega(n)$ is the number of prime factors of $n$ counted with multiplicity, $\tau(n)$ is the number of positive divisors of $n$, and $n^{[\frac{1}{2}]}$ is the largest integer whose square divides $n$. Also, let $t(n)$ be the number of primitive representations of $n$ as a sum of three integer squares.

\begin{defn} $\mathbb{B} = \mathbb{Z}[i,j,k,\frac{1+i+j+k}{2}]$ is the set of \emph{Hurwitz quaternions}. A quaternion $X$ is called a \emph{vector} if it has trace $0$. It is called \emph{proper} if it can't be written as an integer greater than $1$ times another Hurwitz quaternion. The \emph{norm} of $X$ is defined to be $\mbox{Nm}(X) = X\bar{X}$, the \emph{trace} of $X$ is defined to be $\mbox{Tr}(X) = X + \bar{X}$, and the \emph{real part} of $X$ is defined to be $\mbox{Re}(X) = \frac{1}{2}\mbox{Tr}(X)$.
\end{defn}

We will frequently use the fact that the ring of Hurwitz quaternions is a left (respectively right) Euclidean domain under the usual norm, as well as other basic facts about the arithmetic of the quaternions. These facts are reviewed in Linnik and Maly\v{s}ev's article \cite{linnik-quaternions}.

\begin{defn}
Let $r = 83\cdot 91\cdot 99$. An integral quaternion $R$ with norm $r$ is called \emph{good} if the set of integral quaternions orthogonal to $R$ has a basis $A,B,C$ such that $\mbox{Nm}(A) = 83, \mbox{Nm}(B) = 91, \mbox{Nm}(C) = 99$, and $A, B, C$ are mutually orthogonal.
\end{defn}

\begin{rem} There is a correspondence between good quaternions $R$ of norm $r$ and representations of the ternary quadratic form $83x^2+91y^2+99z^2$ as a sum of four squares of linear forms. If $A = a_0 + a_1i + a_2j + a_3k$, and similarly for $B, C$, then
\[
83x^2 + 91y^2 + 99z^2 = \sum_{i=0}^3 (a_ix + b_iy + c_iz)^2.
\]
Thus we can use Gauss's theory of quadratic forms to systematically find good quaternions.
\end{rem}

\begin{lem}\label{R} A positive number $m$ can be represented by the form $83x^2 + 91y^2 + 99z^2$ if and only if we can simultaneously solve the quaternion equations
\[
L = RX,\ L^2 = -rm.
\]
for $L$ and $X$ Hurwitz quaternions, and $R$ a good quaternion of norm $r$. If $L$ is proper, then $m$ can be represented primitively.
\end{lem}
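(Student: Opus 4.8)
The plan is to move back and forth between representations $m = 83x^2+91y^2+99z^2$ and solutions of $L = RX$, $L^2 = -rm$ by means of the orthogonal-complement lattice of a good quaternion, using multiplicativity of the norm together with the elementary fact that a Hurwitz quaternion whose square is a negative real number must be a vector.

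First I would fix once and for all a good quaternion $R$ of norm $r$; such an $R$ exists by the preceding remark, equivalently by writing $83x^2+91y^2+99z^2$ as a sum of four squares of linear forms via Gauss's theory of quadratic forms. Let $A,B,C$ be the mutually orthogonal basis of the lattice of integral quaternions orthogonal to $R$, with norms $83,91,99$, supplied by the definition of ``good''. Two elementary identities do all the work: for integers $x,y,z$, mutual orthogonality of $A,B,C$ gives $\mathrm{Nm}(Ax+By+Cz) = 83x^2+91y^2+99z^2$; and for any $V$ orthogonal to $R$ the product $R\bar V$ has zero real part, since $\mathrm{Re}(R\bar V) = \langle R,V\rangle$, hence is a vector, so $(R\bar V)^2 = -\mathrm{Nm}(R\bar V) = -r\,\mathrm{Nm}(V)$.

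For the forward implication, given a representation $m = 83x^2+91y^2+99z^2$ I would put $V = Ax+By+Cz$, $X = \bar V$, and $L = RX = R\bar V$; then $L$ is a vector of norm $rm$, so $L^2 = -rm$. For the converse, given Hurwitz quaternions $L,X$ with $L = RX$ and $L^2 = -rm$, the hypothesis $m > 0$ makes $L^2$ a negative real, so $L$ cannot be real and must be a vector; then $\mathrm{Nm}(L) = rm$ and multiplicativity give $\mathrm{Nm}(X) = m$. Setting $V = \bar X$, the computation $\bar L = V\bar R$ combined with $\bar L = -L = -R\bar V$ yields $\langle V,R\rangle = 0$, so $V$ lies in the lattice of integral quaternions orthogonal to $R$, namely $\mathbb{Z} A + \mathbb{Z} B + \mathbb{Z} C$; writing $V = Ax+By+Cz$ gives $m = \mathrm{Nm}(V) = 83x^2+91y^2+99z^2$. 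For the primitivity statement, if $L$ is proper but $d = \gcd(x,y,z) > 1$, then $V = d V'$ with $V' = A(x/d)+B(y/d)+C(z/d) \in \mathbb{B}$, whence $L = R\bar V = d(R\bar{V'})$ with $R\bar{V'} \in \mathbb{B}$, contradicting properness; so a proper $L$ forces $\gcd(x,y,z) = 1$.

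I expect the one genuinely delicate point to be the assertion that $\bar X$ lands in the integral lattice $\mathbb{Z} A + \mathbb{Z} B + \mathbb{Z} C$ rather than merely in its rational span — this is exactly what the word ``good'' buys — and, relatedly, keeping the left and right multiplications and the conjugations consistent so that $L$ really does come out with trace zero. Verifying that a good quaternion of norm $r$ exists at all is the other step one must not skip, but this is a finite matter handled by Gauss's theory as in the remark.
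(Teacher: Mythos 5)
Your argument is correct and is essentially the same as the paper's: both directions go through the orthogonal-complement lattice $\mathbb{Z}A+\mathbb{Z}B+\mathbb{Z}C$ of a good $R$, using that $L^2=-rm<0$ forces $L$ to be a vector, hence $\bar X$ orthogonal to $R$, and conversely that $R\bar V$ is a vector of norm $r\,\mathrm{Nm}(V)$. Your explicit treatment of the primitivity claim (a common factor of $x,y,z$ would make $L$ imprimitive) is a detail the paper leaves implicit, but the route is identical.
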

\begin{proof} If $L,X$ are as above, then we have $\mbox{Re}(L) = 0$, so $\bar{X}$ is orthogonal to $R$. Let $A,B,C$ be as in the above definition. Then we can write $\bar{X} = xA + yB + zC$ for integers $x,y,z$. Since $\mbox{Nm}(X) = \frac{\mbox{Nm}(L)}{\mbox{Nm}(R)} = m$, we have
\[
m = \mbox{Nm}(xA + yB + zC) = 83x^2 + 91y^2 + 99z^2.
\]
Conversely, for any representation $m = 83x^2 + 91y^2 + 99z^2$ we can take $\bar{X} = xA + yB + zC$, $L = RX$, and we will have $\mbox{Re}(L) = 0$, so $L^2 = -\mbox{Nm}(L) = -rm$.
\end{proof}

\begin{defn} A proper quaternion $Q$ with norm a power of $5$ is called a \emph{tourist quaternion} if for every proper quaternion $R$ of norm $r$ we can write $Q = Q_1Q_2$ and $Q_2R = R'Q_3$, with $Q_1,Q_2,Q_3$ Hurwitz quaternions and $R'$ a good quaternion of norm $r$.
\end{defn}

\begin{defn} A quaternion $Q$ with norm congruent to $1$ modulo $4$ is said to be in \emph{standard form} if it is congruent to $1$ modulo $2\mathbb{B}$ and $\mbox{Tr}(Q) \equiv 2 \pmod{8}$.
\end{defn}

\begin{rem} Any quaternion with norm congruent to $1$ modulo $4$ has a unique left (respectively right) associate which is in standard form.
\end{rem}

\begin{lem}\label{Q} If there is an integral quaternion $L$ with $L^2 = -rm$, an integer $l$, and a tourist quaternion $Q$ such that
\[
l + L = UQ
\]
for some integral quaternion $U$, then $m$ can be represented by the quadratic form $83x^2 + 91y^2 + 99z^2$. If $L$ is proper and $m$ is relatively prime to $5$, then $m$ can be represented primitively.
\end{lem}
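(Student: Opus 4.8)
The idea is to use the tourist property of $Q$ to replace a proper left divisor of norm $r$ of $L$ by a \emph{good} quaternion, and then invoke Lemma~\ref{R}. Note first that $L^2 = -rm$ with $rm>0$ forces $\mathrm{Re}(L)=0$ and $\mathrm{Nm}(L)=rm$, so $\bar L=-L$.

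I would begin by choosing a proper quaternion $R$ of norm $r$ that left-divides $L$. When $L$ is proper this is routine: for each prime $p\mid r$ one has $p\ne 5$ and $L\bmod p$ is a nonzero singular matrix in $M_2(\mathbb{F}_p)$, hence of rank exactly one, so one can peel off left divisors of norms $3,3,7,11,13,83$ in turn, and their product $R$ is again proper because $L$ is (an analogous argument handles a general $L$, choosing the peeled factors so that the product stays proper, which suffices for the weaker, non-primitive conclusion). Feeding this $R$ into the definition of ``tourist quaternion'' produces a factorization $Q=Q_1Q_2$ together with an identity $Q_2R=R'Q_3$, where $Q_1,Q_2,Q_3$ are Hurwitz, $R'$ is good of norm $r$, and $\mathrm{Nm}(Q_3)=\mathrm{Nm}(Q_2)$ is a power of $5$. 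Now I would conjugate: from $l+L=UQ_1Q_2$ and $\bar L=-L$ one gets $l-L=\overline{Q_1Q_2}\,\bar U=\bar Q_2 W$ with $W=\bar Q_1\bar U$, and substituting $L=l-\bar Q_2 W$ into $Q_2 L\bar Q_2$ gives $Q_2L\bar Q_2=\mathrm{Nm}(Q_2)\,(l-W\bar Q_2)$. Hence $L':=Q_2LQ_2^{-1}=l-W\bar Q_2$ is an \emph{integral} quaternion, and since conjugation by an invertible element preserves reduced trace and norm, $\mathrm{Re}(L')=0$ and $\mathrm{Nm}(L')=rm$, so $(L')^2=-rm$.

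Writing $L=RX$ with $X:=\bar R L/r\in\mathbb{B}$ of norm $m$, one computes
\[
L' \;=\; Q_2RXQ_2^{-1}\;=\;R'Q_3XQ_2^{-1}\;=\;R'X',\qquad X':=Q_3XQ_2^{-1},
\]
where $X'$ has norm $m$. The substantive point is that $X'$ is again a Hurwitz quaternion --- equivalently, that $R'$ left-divides $L'$. I would prove this prime by prime: for $p\mid r$ (so $p\ne 5$, and $Q_2,Q_3$ are units modulo $p$) the displayed identity shows that the image of $L'\bmod p$ lies in the image of $R'\bmod p$, which is precisely the local criterion for left-divisibility; at $p=3$, where $9\mid r$, one runs the same check one power further, using that $L=RX$ holds in $\mathbb{B}$ and that conjugation by $Q_2$ induces an automorphism of $\mathbb{B}/9\mathbb{B}$. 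Granting this, Lemma~\ref{R} applied to $L'=R'X'$ with $(L')^2=-rm$ shows that $m$ is represented by $83x^2+91y^2+99z^2$. If moreover $L$ is proper and $\gcd(m,5)=1$, then $L'$ is proper as well: a prime $p\ne 5$ dividing $L'$ would divide $\mathrm{Nm}(Q_2)L$, hence $L$, contradicting properness of $L$, while $5\mid L'$ would force $25\mid\mathrm{Nm}(L')=rm$, which is impossible since $5\nmid r$ and $5\nmid m$; the last sentence of Lemma~\ref{R} then yields a primitive representation.

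I expect the main obstacle to be exactly the integrality of $X'=Q_3XQ_2^{-1}$, i.e. the verification that $R'$ left-divides $L'$: this is where one has to descend to a genuine local computation at each prime dividing $r$, with the non-squarefree prime $3$ being the delicate case. Everything else --- the conjugation trick, the norm and trace bookkeeping, and the reduction to Lemma~\ref{R} --- is essentially formal.
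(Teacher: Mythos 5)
Your argument is correct in substance but takes a genuinely different route from the paper at the key step: \emph{where} the proper norm-$r$ quaternion fed to the tourist property comes from. You factor $L = RX$ directly, which forces you to verify afterwards that $X' = Q_3XQ_2^{-1}$ is integral. The paper instead uses B\'ezout on $l$: writing $qu + rv = l$ with $q = \mbox{Nm}(Q)$, it passes to $rv + L = (U - u\bar Q)Q$, whose left factor $U - u\bar Q$ has norm divisible by $r$ and therefore yields a proper $R'$ of norm $r$ sitting to the \emph{left} of $Q$. Applying the tourist property to that $R'$ and conjugating by $Q_2$ then gives $rv + Q_2LQ_2^{-1} = RQ_3VQ_1$ with $R$ good, so the good quaternion appears as a manifest left factor of a product of integral quaternions and no integrality check is needed. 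Your flagged obstacle is real but has a cleaner resolution than the prime-by-prime local analysis you propose: once you know $L' = Q_2LQ_2^{-1}$ is integral (your computation $L' = l - W\bar Q_2$ is correct), observe that $rX' = \bar{R'}L'$ and $5^kX' = Q_3X\bar{Q_2}$ are both integral, and $\gcd(r,5^k)=1$ forces $X'\in\mathbb{B}$; this sidesteps the delicate case $9\mid r$ entirely. The one place where your route is weaker is the non-proper case: extracting a \emph{proper} left divisor of norm $r$ from a non-proper $L$ (e.g.\ $3\mid L$) is not routine and may fail, whereas the paper's divisor is extracted from $U - u\bar Q$ rather than from $L$; since the lemma is only ever applied with $L$ proper this does not matter in practice, and your treatment of properness of $L'$ for the primitivity claim is actually more explicit than the paper's.
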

\begin{proof} Let $q = \mbox{Nm}(Q)$. Since $q$ and $r$ are relatively prime, we can find $u,v$ such that $qu + rv = l$. Then we have
\[
rv + L = (U-u\bar{Q})Q.
\]
Taking norms of both sides, we see that $\mbox{Nm}(U-u\bar{Q})$ is a multiple of $r$, so we can write $U-u\bar{Q} = R'V$ for some proper quaternion $R'$ with norm $r$. Write $Q = Q_1Q_2$ such that $Q_2R' = RQ_3$ with $Q_1,Q_2,Q_3$ Hurwitz quaternions and $R$ a good quaternion. Then
\[
rv + Q_2LQ_2^{-1} = Q_2R'VQ_1 = RQ_3VQ_1.
\]
Write $L' = Q_2LQ_2^{-1}$. Since the right hand side of the above is an integral quaternion, $L'$ is integral as well. Thus
\[
L' = R(Q_3VQ_1 - v\bar{R}),\ L'^{2} = -rm,
\]
so by Lemma \ref{R} $m$ can be represented by the form $83x^2 + 91y^2 + 99z^2$.
\end{proof}

Next we check that most proper quaternions of norm $5^s$ are in fact tourist quaternions.

\begin{lem}\label{walk} The number of proper quaternions $Q$ with norm $5^s$ which are in standard form and are not tourist quaternions is at most
\[
1625472\cdot 5^{s-\lfloor\frac{s}{11422}\rfloor} \le 8127360\cdot 5^{\frac{11421}{11422}s}.
\]
\end{lem}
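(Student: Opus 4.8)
The plan is to translate the statement into a counting problem about non-backtracking walks on the Bruhat--Tits tree of $\mathrm{PGL}_2(\mathbb{Q}_5)$. Recall the dictionary: a proper quaternion $Q$ with $\mathrm{Nm}(Q) = 5^s$ in standard form corresponds bijectively to a non-backtracking walk of length $s$ from the base vertex of the $(5+1)$-regular tree $\mathcal{T}$, and a factorization $Q = Q_1Q_2$ corresponds to splitting this walk into a prefix $Q_1$ and a suffix $Q_2$; in particular there are exactly $6\cdot 5^{s-1}$ such $Q$. For a fixed proper quaternion $R$ of norm $r$ and a suffix $Q_2$ of norm $5^k$, the product $Q_2R$ has norm $5^kr$ with $\gcd(5,r)=1$, so it has a unique factorization $Q_2R = R'Q_3$ with $\mathrm{Nm}(R') = r$, $\mathrm{Nm}(Q_3) = 5^k$; write $R' = T_{Q_2}(R)$. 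As the suffix is grown one step at a time, $T_{Q_2}(R)$ evolves by a transition depending only on its current value and on the new step, so reading the walk of $Q$ from right to left produces a trajectory $R = R^{(0)}, R^{(1)}, \dots, R^{(s)}$ in the finite set $\mathcal{S}$ of unit classes of proper quaternions of norm $r$. Unwinding the definition of a tourist quaternion together with Lemma \ref{R}, $Q$ is a tourist quaternion if and only if for every $R\in\mathcal{S}$ this trajectory meets the set $\mathcal{G}\subseteq\mathcal{S}$ of good quaternions; finiteness and an explicit bound for $|\mathcal{S}|$ come from Jacobi's four-square theorem applied to $r = 83\cdot 91\cdot 99$, and $\mathcal{G}\ne\varnothing$ is supplied by the reduction theory of ternary forms indicated in the remark after Lemma \ref{R}.

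Consequently, the standard-form proper non-tourist quaternions of norm $5^s$ inject into the set of pairs consisting of a start state $R\in\mathcal{S}\setminus\mathcal{G}$ together with a first direction, and a length-$s$ non-backtracking walk $w$ on $\mathcal{T}$, such that the $R$-trajectory driven by the reversal $\bar w$ stays inside $\mathcal{S}\setminus\mathcal{G}$ for all $s$ steps. Tagging each state by its incoming direction turns the transitions into a deterministic automaton $\mathcal{A}$ of out-degree exactly $5$; let $\mathcal{A}^-$ be the sub-automaton on the forbidden states. It therefore suffices to bound, for each fixed start vertex of $\mathcal{A}^-$, the number of length-$s$ walks in $\mathcal{A}^-$, and then multiply by the number of start vertices --- this product is the constant $1625472$ in the statement. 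The core estimate is: there is an explicit $L$ (which will work out to $L = 11422$) such that from every vertex of $\mathcal{A}^-$ the number of length-$L$ continuations that remain inside $\mathcal{A}^-$ is at most $5^{L-1}$, i.e. one full factor of $5$ is lost in every block of $L$ steps. Granting this, a walk of length $s$ splits into $\lfloor s/L\rfloor$ full blocks (each contributing a factor $\le 5^{L-1}$) and a remainder of length $s - L\lfloor s/L\rfloor$ (contributing $\le 5^{\,s - L\lfloor s/L\rfloor}$), for a total of at most $5^{\,(L-1)\lfloor s/L\rfloor + (s - L\lfloor s/L\rfloor)} = 5^{\,s - \lfloor s/L\rfloor}$; summing over the at most $1625472$ start vertices gives the first bound, and the second follows from $s - \lfloor s/L\rfloor \le s(1 - 1/L) + 1$ together with $5\cdot 1625472 = 8127360$.

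The crux --- and the step I expect to be the main obstacle --- is the core estimate: that in every block of $L$ steps the forbidden automaton loses a full factor of $5$, with the explicit value $L = 11422$. This rests on an effective reachability statement, namely that from every proper quaternion of norm $r$ one can reach a good quaternion along a non-backtracking $5$-walk of bounded length --- which holds because the $5$-Brandt/Hecke graph at level $r$ is connected (a consequence of the Hurwitz order having class number one) and $\mathcal{G}$ is non-empty --- combined with a careful accounting of the finite state set and of which states are good, so that the abstract "reachable" is upgraded to the quantitative "a full factor of $5$ of the continuations escape within $L$ steps". Making Linnik's and Maly\v{s}ev's analysis of quaternions "directed by senior forms" completely explicit, including the correction to Note 13 (cf. Lemma \ref{note13}), and getting all the constants to close, is where the bulk of the work lies.
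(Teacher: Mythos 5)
Your setup is essentially the paper's: you convert standard-form proper quaternions of norm $5^s$ into non-backtracking walks (the paper phrases this as reduced words in $1\pm 2i, 1\pm 2j, 1\pm 2k$ acting on the finite set of unit classes of proper quaternions of norm $r$, giving a $6$-regular graph $G$ with $1354752$ vertices), and you correctly reduce the lemma to bounding, per bad starting state, the number of length-$s$ non-backtracking walks that avoid the good vertices, with the constant $1625472$ coming from the count of bad start data. The problem is that you leave the actual content of the lemma unproved. The core estimate --- that in every block of $L=11422$ steps at most $5^{L-1}$ of the $5^{L}$ non-backtracking continuations stay inside the bad set --- is exactly where the lemma lives, and the route you sketch for it (connectivity of the norm-$r$ graph plus non-emptiness of the good set, ``upgraded'' by careful accounting) cannot deliver it: connectivity only guarantees that from each bad vertex \emph{some} continuation escapes within the diameter, which loses a factor of roughly $\bigl(1-5^{-\mathrm{diam}}\bigr)$ per block, not a full factor of $5$. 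To lose a factor of $5$ per block one needs either genuine expansion (e.g.\ the Ramanujan bound of Lubotzky--Phillips--Sarnak, which the paper mentions but deliberately does not use) or, as the paper does, a direct computation: the good vertices are exhibited explicitly ($R_1,\dots,R_4$ together with orthogonal bases of norms $83,91,99$, giving $192$ good vertices under sign changes and permutations), the graph $G$ is built by computer, and a computer enumeration verifies that the number of non-backtracking walks of length $11422$ avoiding the good vertices, from any start vertex and forbidden first edge, is strictly less than $5^{11421}$. Neither the explicit good vertices (whose count $192$ enters the constant) nor any substitute for this verification appears in your proposal.

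A secondary confusion: you locate the remaining work in making the ``senior forms'' analysis and the correction to Note 13 explicit, but those belong to Lemma \ref{senior} and Lemma \ref{note13} (the conjugate-pair counts) and play no role in this lemma. For this statement the only missing ingredients are the explicit list of good vertices and the quantitative walk count on the finite graph, and without them the claimed exponent $s-\lfloor s/11422\rfloor$ has no justification.
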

\begin{proof} Every proper quaternion of norm $5^s$ which is in standard form can be written in a unique way as a product of the quaternions $1\pm 2i, 1\pm 2j, 1\pm 2k$ such that no term is immediately followed by its conjugate (this is a consequence of the version of the unique factorization theorem which holds for Hurwitz quaternions, but it can also be checked directly). In other words, there is a one-to-one correspondence with reduced words of length $s$ in the free group on three elements. The total number of such $Q$ is $\frac{6}{5}5^s$, which can be checked by induction on $s$.

Let $G$ be the following multi-graph: the vertices of $G$ correspond to the proper quaternions of norm $r$ up to right-multiplication by units, and there is an edge between vertices corresponding to quaternions $R$ and $R'$ for each one of $1\pm 2i, 1\pm 2j, 1\pm 2k$ in the set $\{\lambda\in \mathbb{B}\mid \lambda R = R'\lambda\}$. For a fixed $R$, and a quaternion $Q$ as above, if we write
\[
Q = Q_s\cdots Q_1,
\]
each $Q_i$ one of $1\pm 2i, 1\pm 2j, 1\pm 2k$, then by letting each suffix $Q_i\cdots Q_1$ of $Q$ act on $R$ we obtain a non-backtracking walk on $G$ starting at $R$ (a \emph{non-backtracking} walk is a walk which never immediately travels back along an edge it just traversed). $Q$ is a tourist quaternion if and only if each such walk, starting at each possible $R$, passes through a good vertex at some point. Thus it is enough to find an upper bound on the probability that a random non-backtracking walk of length $s$ starting at any vertex of $G$ fails to hit a good vertex.

The multi-graph $G$ turns out to have several nice properties. First, it is connected - this is checked in general by Linnik and Maly{\v{s}}ev \cite{linnik-quaternions} using facts about representations of large numbers by quadratic forms in four variables. Second, it has no multiple edges or loops (so it is in fact a graph). Third, the graph $G$ is even a Ramanujan graph by a result of Lubotzky, Phillips, and Sarnak \cite{ramanujan-graph} (their proof assumes $r$ is prime for convenience of exposition, but this assumption can be easily removed) - we will not use this result, but it is useful in the general case if $r$ is very large and the graph $G$ becomes too large to compute (see \cite{akshay} for details).

The four vertices $R_1 = 216 + 365i + 421j + 625k, R_2 = 216 + 409i + 443j + 581k, R_3 = 736 + 99i + 155j + 415k, R_4 = 404 + 99i + 487j + 581k$ are all good: the orthogonal complement to $R_1$ is spanned by
\[
A_1 = 3 - 3i + 7j - 4k, B_1 = 1 - 8i - j + 5k, C_1 = 9 + i - 4j - k,
\]
the orthogonal complement of $R_2$ is spanned by
\[
A_2 = 3 - 7i + 5j, B_2 = 1 + 4i + 5j - 7k, C_2 = 9 + i - 4j - k,
\]
the orthogonal complement of $R_3$ is spanned by
\[
A_3 = 1 - 9i + j, B_3 = 1 + i + 8j - 5k, C_3 = 5 - 5j - 7k,
\]
and the orthogonal complement of $R_4$ is spanned by
\[
A_4 = 1 - 9i + j, B_4 = 5 + i + 4j - 7k, C_4 = 7 - 7j + k.
\]
If we consider all possible ways of permuting or changing the signs of the coefficients of $R_1, R_2, R_3, R_4$, we find a total of $4\cdot\frac{2^4\cdot 4!}{8} = 192$ good vertices in the graph $G$.

The number of proper quaternions of a given odd norm $r$ up to right multiplication by units is given by the formula
\[
r\prod_{p\mid r} \frac{p+1}{p},
\]
which follows from the version of the unique factorization theorem which holds for Hurwitz quaternions.
Thus the graph $G$ only has $(83+1)(7+1)(13+1)(3+1)3(11+1) = 1354752$ vertices (this can also be checked by building the graph and counting the number of vertices), so it is not too hard to write a computer program to compute $G$ and then to count the number of non-backtracking walks of length $11422$ which never pass through one of the $192$ good vertices and do not start by following a specified edge. The details of the computation are as follows. First, pick a known vertex in $G$. From any given vertex of $G$ one can easily find its neighbours by acting on the corresponding quaternion with each of $1\pm 2i, 1\pm 2j, 1\pm 2k$, and so one can build the whole graph $G$ using a breadth first search, keeping track of the correspondence between vertices and quaternions using a hash table and storing $G$ as an adjacency list rather than an adjacency matrix to take advantage of the fact that every vertex has degree $6$. Then iteratively find the number of non-backtracking walks of every given length $l$ from every vertex which avoid each given starting edge on that vertex and never pass through a good vertex - note that in order to compute the number of such walks of length $l$ we only need to know the number of such walks of length $l-1$ at the adjacent vertices. Since the number of walks grows exponentially, after each step with $l > 24$ the number of such walks is divided by $5$ and rounded up in order to avoid integer overflow (this makes almost no difference to the computation, and still provides an upper bound on the number of such walks). The code used to perform this computation is located on the arxiv together with this preprint, and the whole computation takes about two hours on a laptop.

The number of such walks of length $11422$ is then found to be strictly less than $5^{11421}$, regardless of the starting vertex (and the specified edge). Thus, the probability that a random non-backtracking walk of length $s$ avoids the $192$ good vertices is at most $5^{-\lfloor\frac{s}{11422}\rfloor}$, regardless of the starting point.

Since the number of quaternions $Q$ which are not tourist quaternions is at most the number of bad vertices times the maximum number of non-backtracking walks of length $s$ which never pass through a good vertex starting from any one of them, we see that there are at most
\[
(1354752-192)\cdot \frac{6}{5}5^{s-\lfloor\frac{s}{11422}\rfloor} = 1625472\cdot 5^{s-\lfloor\frac{s}{11422}\rfloor}
\]
such quaternions $Q$.
\end{proof}

We will defer the more technical lemmas needed for the proof of Lemma \ref{distinct} and Theorem \ref{quadratic} in order to maintain the flow of the argument.

\begin{lem}\label{distinct} Let $0 < \tau < \frac{1}{2}$ be a fixed real number. Let $s$ be an integer such that $(rm)^{\frac{1}{2}+\tau} \le 5^s < 5(rm)^{\frac{1}{2}+\tau}$. Let $l$ be an integer such that $5^s\mid\mid l^2+rm$. Then the number of distinct proper integral quaternions $Q$ of norm $5^s$ in standard form and such that we can find $L, U$ satisfying
\[
l + L = QU
\]
where $L$ is a proper integral vector of norm $rm$ and $U$ is an integral quaternion, is at least
\[
\frac{t(rm)^2}{2t(rm) + 96S},
\]
where $S$ is the sum
\[
S = \sum_{c=1}^{6(rm)^{\frac{1}{2}-\tau}}\sum_{5^{s_1}\mid c}\sum_{d=1}^{5^{s_1}\sqrt{\frac{4rm}{5^sc}}}2^{\omega(5c)}\tau(4rm-5^{s-2s_1}d^2c)\left(4rm, c\right)^{[\frac{1}{2}]},
\]
if $(rm)^{\tau} > \frac{8\sqrt{3}}{3}$ and $5\nmid rm$. In particular, if $t(rm) \ge 96S$ then the number of such $Q$ is at least $\frac{t(rm)}{3}$.
\end{lem}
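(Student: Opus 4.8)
The plan is to run Linnik's directing-quaternion double count, feeding in the primitive representations of $rm$ as a sum of three squares. To each such representation $rm = a^2+b^2+c^2$ I would attach the proper integral vector $L = ai+bj+ck$; there are exactly $t(rm)$ of these. For each one $l+L$ has norm $l^2+rm$, divisible by exactly $5^s$ by hypothesis; since $rm>1$ we have $s\ge1$, and as $5\nmid rm$ this forces $5\nmid l$, so $l+L$ is proper at $5$. Because the Hurwitz order is Euclidean (hence left-principal), localizing at $5$ shows $l+L$ has a left divisor of norm $5^s$, unique up to a right unit; as $\mathrm{Nm}=5^s\equiv1\pmod{4}$, exactly one of its unit multiples $Q_L$ is in standard form. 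This $Q_L$ is automatically proper (an integer factor would be a power of $5$, forcing $5\mid l$), has norm $5^s$, and $l+L=Q_LU_L$ exhibits $Q_L$ as one of the quaternions the lemma is counting. So it suffices to bound below the number of distinct values among the $Q_L$.

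First I would apply the Cauchy--Schwarz inequality: writing $n_Q$ for the number of $L$ with $Q_L=Q$, the number of distinct $Q$ is at least $t(rm)^2/\sum_Q n_Q^2$, and $\sum_Q n_Q^2 = t(rm) + \#\{(L,L'):L\ne L',\,Q_L=Q_{L'}\}$, so the task is to bound the off-diagonal count by $t(rm)+96S$. If $Q_L=Q_{L'}=Q$ with $L\ne L'$, then $V:=L'-L=Q(U_{L'}-U_L)$ is a nonzero integral vector left-divisible by $Q$, so $5^s=\mathrm{Nm}(Q)$ divides $\mathrm{Nm}(V)$; expanding $\mathrm{Nm}(L+V)=\mathrm{Nm}(L)=rm$ gives $\mathrm{Tr}(\bar{L}V)=-\mathrm{Nm}(V)$, whence $\mathrm{Nm}(V)=|\mathrm{Tr}(\bar{L}V)|\le2\sqrt{\mathrm{Nm}(L)\,\mathrm{Nm}(V)}$ and so $\mathrm{Nm}(V)\le4rm$. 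With $5^s\ge(rm)^{1/2+\tau}$ this gives $\mathrm{Nm}(V)=5^sc$ for some $1\le c\le4(rm)^{1/2-\tau}$. For fixed $V$ the eligible $L$ are exactly the integral vectors on the intersection of the sphere $\mathrm{Nm}(L)=rm$ with the hyperplane $\mathrm{Tr}(\bar{L}V)=-\mathrm{Nm}(V)$ --- a circle in a rational plane --- so they are the representations of a fixed integer by the positive-definite binary quadratic form obtained by restricting the norm form to that plane's lattice.

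The remaining step --- the one I expect to be the real work --- is to organize this count by the right parameters and to insert sharp bounds for the number of representations by binary forms. Writing $V=Q_LW$ with $c=\mathrm{Nm}(W)$, tracking the $5$-adic valuation $s_1$ of the auxiliary quantity that measures how the $5^s$ inside $Q_L$ meets $\mathrm{Nm}(W)$, and parametrizing the residual data by an integer $d$ with $1\le d\le5^{s_1}\sqrt{4rm/(5^sc)}$, the number of integer points on the circle attached to $(c,d,s_1)$ is $\ll 2^{\omega(5c)}\,\tau(4rm-5^{s-2s_1}d^2c)\,(4rm,c)^{[\frac{1}{2}]}$ by the classical bound on representation numbers of binary quadratic forms in a fixed genus (the $2^{\omega}$ counting genus characters, the $(4rm,c)^{[\frac{1}{2}]}$ absorbing imprimitivity). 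Summing over $1\le c\le6(rm)^{1/2-\tau}$, over $5^{s_1}\mid c$, and over $d$ gives $96S$, the constant $96$ absorbing the $24$-element unit group of $\mathbb{B}$ together with a sign/permutation overcount, while a short separate treatment of the degenerate terms where the circle collapses contributes the extra $t(rm)$. I would isolate this geometric estimate as a lemma --- the count of conjugate pairs of quaternions directed by senior forms, Lemma \ref{senior} --- and this is where Maly\v{s}ev's correction to Note~13 is needed. The hypothesis $(rm)^{\tau}>\frac{8\sqrt{3}}{3}$ enters precisely to keep $c=\mathrm{Nm}(V)/5^s$ well below $rm$, so that the circles are genuinely thin, the geometry-to-arithmetic translation is valid, and the binary forms that appear are positive-definite with controlled discriminant; and $5\nmid rm$ is what made $Q_L$ well defined and proper in the first place.

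Finally, putting $\sum_Q n_Q^2\le2t(rm)+96S$ into the Cauchy--Schwarz bound yields the claimed lower bound $t(rm)^2/(2t(rm)+96S)$ for the number of distinct $Q$; and when $t(rm)\ge96S$ this is at least $t(rm)^2/(3t(rm))=t(rm)/3$, giving the last assertion.
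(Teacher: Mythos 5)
Your framing is right and it matches the paper's: the $t(rm)$ proper integral vectors $L$ of norm $rm$ each determine a standard-form left divisor $Q_L$ of $l+L$ of norm $5^s$ (unique up to units at $5$ because $5\nmid rm$), and Cauchy--Schwarz reduces the lemma to the bound $\sum_Q n_Q^2 \le 2t(rm)+96S$. The gap is in that bound, which is the entire content of the lemma. Your reduction studies the difference vector $V=L'-L=Q(U'-U)$; the estimate $\mathrm{Nm}(V)=5^s c$ with $c\le 4(rm)^{\frac{1}{2}-\tau}$ is correct, but $V$ is not the object to which the senior-form machinery applies. Lemma \ref{senior} counts \emph{conjugate} pairs $l+L_1=QU_1$, $l+L_2=\bar{Q}U_2$, classified by the binary form directing the angle, i.e.\ by short vectors of the module $\Lambda_{L_1\to L_2}=\{\lambda : L_1\lambda=\lambda L_2\}$; and $V=L'-L$ does not lie in this module (the relation $LV=VL'$ forces $LL'=-rm$, hence $L=L'$). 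So you cannot invoke Lemma \ref{senior} from your setup: you first need the bijection $(L',U')\mapsto(\bar{Q}\bar{L'}\bar{Q}^{-1},\bar{U'})$ converting same-$Q$ pairs into conjugate pairs, which is the opening step of the paper's proof and is absent from yours. You also drop the other half of the dichotomy: after that conversion, not every off-diagonal pair is directed by a form taking a value below $6(rm)^{\frac{1}{2}-\tau}$, and the pairs directed by junior forms need the separate rigidity argument of Lemma \ref{junior} (each $L_0$ has at most one junior partner), which supplies the first $t(rm)$ in $2t(rm)+96S$.

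Separately, the passage from ``for fixed $V$ the eligible $L$ lie on a circle'' to the specific triple sum $S$ is asserted rather than derived. The parameters $s_1$ and $d$, the identity of the binary form whose representations are counted, and the constant $96$ all emerge from a delicate computation in Lemma \ref{senior} (factoring $C=C'T$, $Q=\bar{T}Q'$ when $CQ$ is imprimitive, with $\mathrm{Nm}(T)=5^{s_1}$; forming $H_1=C'Q'$ and $H_2=2U_1'\bar{C'}$; and applying the corrected Note 13, Lemma \ref{note13}); your description of $s_1$ as measuring ``how the $5^s$ inside $Q_L$ meets $\mathrm{Nm}(W)$'' does not correspond to this and cannot be checked. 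As written, the central estimate is a restatement of the target rather than a proof of it.
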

\begin{proof} First we note that there is a bijection between pairs of $L, U$ as above and pairs of $L', U'$ as above but with $Q$ replaced with $\bar{Q}$. This bijection is given by taking $U' = \bar{U}$, $L' = \bar{Q}\bar{U}-l = \bar{Q}\bar{L}\bar{Q}^{-1}$. Since $5\nmid \mbox{Nm}(L)$, $L'$ will be proper if $L$ is, and conversely.

Thus the number of pairs of proper vectors $L_1, L_2$ of norm $m$ that correspond to a common $Q$ is the same as the number of pairs $L_1, L_2$ satisfying
\begin{align*}
l+L_1 &= QU_1,\\
l+L_2 &= \bar{Q}U_2.
\end{align*}
By Lemmas \ref{junior} and \ref{senior}, the number of such pairs is then at most the total number of possible quaternions $L_1$ plus $t(rm) + 96S$.

Now we apply the Cauchy-Schwarz inequality to see that the total number of pairs $(L_1, L_2)$ as above times the number of distinct quaternions $Q$ is at least $t(rm)^2$.
\end{proof}

\begin{thm}\label{quadratic} If $m, m' > e^{6.6\cdot 10^6}$, $m, m' \equiv \pm 2 \pmod{5}$, $m\equiv 3\pmod{4}$, and $m' \equiv 2\pmod{4}$, then at least one of $m, m'$ can be primitively represented by the quadratic form $83x^2 + 91y^2 + 99z^2$.
\end{thm}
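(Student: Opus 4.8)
The plan is to run Linnik and Maly\v{s}ev's quaternion method through the explicit Lemmas~\ref{R}, \ref{Q}, \ref{walk}, and~\ref{distinct}, using the Siegel-type estimate of Lemma~\ref{siegel} as the only ineffective input made effective. Write $r = 83\cdot 91\cdot 99 = 747747$; this is odd, $\equiv 3\pmod 8$, and $\equiv 2\pmod 5$. Since $m, m'\equiv \pm 2\pmod 5$ we get $5\nmid rm, rm'$ and $-rm, -rm'$ are nonzero squares modulo $5$, hence unit squares in $\mathbb{Z}_5$; since $m\equiv 3\pmod 4$ and $m'\equiv 2\pmod 4$, the number $rm$ is odd while $rm'$ has $2$-adic valuation one, so neither is divisible by $4$, their squarefree parts differ (one odd, one twice an odd number), and reducing modulo $8$ shows neither is $\equiv 7\pmod 8$. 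In particular $rm$ and $rm'$ each admit a primitive representation as a sum of three squares, and $(rm, rm')$ satisfies the hypotheses of Lemma~\ref{siegel}. Fix small positive parameters $\epsilon<\tau$ of order $10^{-5}$ --- concretely something like $\epsilon = 10^{-5}$, $\tau = 2\cdot 10^{-5}$, chosen so that $\epsilon+\tau<\tfrac{1}{2\cdot 11422}$ while $\tau-\epsilon$ is not too small, the admissible range being forced by the three inequalities below. Applying Lemma~\ref{siegel} to $rm, rm'$ and relabelling if necessary, I may assume
\[
t(rm)\ \ge\ c_\epsilon\,(rm)^{\frac12-\epsilon},\qquad c_\epsilon := \frac{12\epsilon}{\pi}\prod_{p\text{ odd}}\min\!\Big(1,\,p^{2\epsilon}\big(1-\tfrac1p\big)\Big),
\]
and I bound $c_\epsilon$ below by an explicit multiple of $\epsilon/\log(1/\epsilon)$ via Mertens' theorem; it then suffices to represent this relabelled $m$.

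Next, choose an integer $s$ with $(rm)^{\frac12+\tau}\le 5^s<5(rm)^{\frac12+\tau}$, and an integer $l$ with $5^s\mid\mid l^2+rm$ (possible because $-rm$ is a unit square in $\mathbb{Z}_5$, hence has exactly two square roots modulo each power of $5$; one picks $l$ agreeing with one of them modulo $5^s$ but not modulo $5^{s+1}$). I then apply Lemma~\ref{distinct} with this $\tau, s, l$: the side conditions $5\nmid rm$ and $(rm)^\tau>\tfrac{8\sqrt3}{3}$ are automatic, and the substantive requirement is $t(rm)\ge 96S$. Here I bound the sum $S$ of that lemma using the deferred level-lowering estimates of Lemmas~\ref{level} and~\ref{sum}, which on average trade the divisor factor $\tau(4rm-\cdots)$ for a power of $\log(rm)$ and yield a bound of the shape $S\ll (rm)^{\frac12-\tau}(\log rm)^{O(1)}$; since $\epsilon<\tau$ this falls below $\tfrac{1}{96}c_\epsilon(rm)^{\frac12-\epsilon}$ once $\log(rm)$ exceeds an explicit bound of order $(\tau-\epsilon)^{-1}$ times a constant coming from $\log(96/c_\epsilon)$ and the implied power of $\log$. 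Lemma~\ref{distinct} then produces at least $\tfrac13 t(rm)$ distinct proper quaternions $Q$ of norm $5^s$ in standard form for which there exist a proper integral vector $L$ of norm $rm$ and an integral quaternion $U$ with $l+L = QU$.

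Finally, by Lemma~\ref{walk} at most $8127360\cdot 5^{\frac{11421}{11422}s}\le 8127360\,(5(rm)^{\frac12+\tau})^{\frac{11421}{11422}}$ proper quaternions of norm $5^s$ in standard form fail to be tourist quaternions, and the constraint $\epsilon+\tau<\tfrac1{2\cdot 11422}$ makes the $rm$-exponent of $\tfrac13 c_\epsilon(rm)^{\frac12-\epsilon}$ strictly larger than $\tfrac{11421}{11422}(\tfrac12+\tau)$; hence for $rm$ past an explicit bound the family from Lemma~\ref{distinct} must contain a tourist quaternion $Q$. Taking conjugates in $l+L=QU$ gives $l+\bar L=\bar U\bar Q$ with $\bar L=-L$ again a proper vector of norm $rm$; matching this to the one-sided form required by Lemma~\ref{Q} via the left--right symmetry of the notions of good and tourist quaternion under quaternion conjugation, and using $5\nmid m$ together with the properness of $L$, that lemma shows $m$ is primitively represented by $83x^2+91y^2+99z^2$. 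I expect the crux to be the explicit bound on $S$: one must carry every constant through Linnik and Maly\v{s}ev's geometric count of conjugate pairs of quaternions and through the level-lowering lemmas, and then balance the three competing constraints on $\epsilon$ and $\tau$ --- from $t(rm)\ge 96S$, from the tourist-quaternion count, and from the size of $c_\epsilon$ --- tightly enough that the resulting threshold on $rm$ is at most $e^{6.6\cdot 10^6}$, which then holds whenever $m>e^{6.6\cdot 10^6}$ since $r>1$.
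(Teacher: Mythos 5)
Your proposal follows essentially the same route as the paper's proof: apply Lemma \ref{siegel} to $rm, rm'$ to get an effective lower bound on $t(rm)$ for one of them, choose $s$ and $l$ with $(rm)^{1/2+\tau}\le 5^s<5(rm)^{1/2+\tau}$ and $5^s\mid\mid l^2+rm$, invoke Lemma \ref{distinct} (with $S$ controlled via Lemmas \ref{level} and \ref{sum}) to produce at least $t(rm)/3$ distinct standard-form divisors $Q$, use Lemma \ref{walk} to find a tourist quaternion among their conjugates, and conclude via Lemma \ref{Q}. The only difference is in the numerical tuning (the paper takes $\epsilon=10^{-6}$, $\tau=1/26510$, and your suggested $\tau-\epsilon=10^{-5}$ is somewhat too small to clear the $\log^9$ factor at the stated threshold), but you correctly identify the three competing constraints and flag this balancing as the crux, so the argument is sound.
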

\begin{proof} Note first that $rm$ and $rm'$ must have different squarefree parts. By Lemma \ref{siegel} applied to $rm, rm'$ with $\epsilon = 10^{-6}$ we may suppose without loss of generality that
\begin{align*}
t(rm) &\ge \frac{12(rm)^{\frac{1}{2}-10^{-6}}}{10^6\pi}\prod_{p\text{ \rm odd prime}} \min\left(1,p^{2\cdot 10^{-6}}\left(1-\frac{1}{p}\right)\right)\\
&> \frac{12(rm)^{\frac{1}{2}-10^{-6}}}{9\cdot 10^6\pi}.
\end{align*}
Let $\tau = \frac{1}{26510}$. Note that $\frac{11421}{11422}(\frac{1}{2} + \tau) < \frac{1}{2} - \frac{1}{165200}$. Let $s$ be chosen such that $(rm)^{\frac{1}{2}+\tau} \le 5^s < 5\cdot (rm)^{\frac{1}{2}+\tau}$. Then for $rm > e^{6.6\cdot 10^6}$ we have
\[
8127360\cdot 5^{\frac{11421}{11422}s} < 5\cdot 8127360\cdot (rm)^{\frac{1}{2}-\frac{1}{165200}} < \frac{t(rm)}{3}.
\]
Thus by Lemma \ref{walk} any set of $\frac{t(rm)}{3}$ proper integral quaternions of norm $5^s$ in standard form contains at least one tourist quaternion.

We need to bound the sum
\[
S = \sum_{c=1}^{6(rm)^{\frac{1}{2}-\tau}}\sum_{5^{s_1}\mid c}\sum_{d=1}^{5^{s_1}\sqrt{\frac{4rm}{5^sc}}}2^{\omega(5c)}\tau(4rm-5^{s-2s_1}d^2c)\left(4rm, c\right)^{[\frac{1}{2}]}
\]
by $\frac{t(rm)}{96}$ in order to apply Lemma \ref{distinct}. By Lemma \ref{sum}, for $rm > e^{6.6\cdot 10^6}$ we have
\begin{align*}
S \le &\ e^{75}(rm)^{10^{-6}}\log(rm)^9(rm)^{\frac{1}{2}-\tau}\\
\le &\ e^{75}(rm)^{\frac{1}{2}+\frac{1}{46635}+10^{-6}-\tau}\\
\le &\ \frac{t(rm)}{96}.
\end{align*}

Now choose $l$ such that $5^s\mid\mid l^2 + rm$. We can do this since $\left(\frac{-rm}{5}\right) = \left(\frac{\pm 1}{5}\right) = 1$. Let $L_1, ..., L_{t(rm)}$ be the set of all proper vectors of norm $rm$. For each $1 \le a \le t(rm)$, we have $5^s \mid\mid \mbox{Nm}(l+L_a) = l^2 + rm$, so to each one there corresponds an equation
\[
l+L_a = Q_aU_a,
\]
where $Q_a$ is a proper quaternion of norm $5^s$ in standard form. By Lemma \ref{distinct}, there are at least $\frac{t(rm)}{3}$ distinct quaternions $Q_a$. Thus for some $a$, $\bar{Q}_a$ is a tourist quaternion, and we have
\[
l + \bar{L}_a = \bar{U}_a\bar{Q}_a,
\]
so by Lemma \ref{Q} $m$ is primitively represented by $83x^2 + 91y^2 + 99z^2$.
\end{proof}

In what follows we shall absorb the constant $r$ into the value of $m$. First we need to prove an effective form of Siegel's theorem which is applicable to numbers which may not be squarefree.

\begin{lem}\label{siegel} If $m, m'$ are positive integers with different squarefree parts which are not multiples of $4$ and not congruent to $7\pmod{8}$, and if $0 \le \epsilon < 10^{-3}$, then we have
\[
\max\left(\frac{t(m)}{m^{\frac{1}{2}-\epsilon}},\ \frac{t(m')}{m'^{\frac{1}{2}-\epsilon}}\right) \ge \frac{12\epsilon}{\pi}\prod_{p\text{ \rm odd prime}} \min\left(1,p^{2\epsilon}\left(1-\frac{1}{p}\right)\right).
\]
\end{lem}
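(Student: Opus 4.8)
We may assume $\epsilon > 0$, since for $\epsilon = 0$ the right-hand side vanishes. The plan is to reduce the inequality to an effective lower bound for $L(1,\chi)$ at a real Dirichlet character, and then to supply that bound by a form of the Siegel--Tatuzawa argument that tolerates imprimitive characters of small conductor.

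\textbf{Reduction to an $L$-value estimate.} Write $m = m_1 m_2^2$ with $m_1$ squarefree; since $4 \nmid m$, the integer $m_2$ is odd. Let $\chi^*$ be the real primitive character of conductor $d$ attached to the imaginary quadratic field $\mathbb{Q}(\sqrt{-m_1})$, so that $d \in \{m_1, 4m_1\}$ and in particular $m_1 \ge d/4$. The classical formula of Gauss for the number of primitive representations by $x^2+y^2+z^2$, valid precisely because $4\nmid m$ and $m \not\equiv 7 \pmod 8$ (see \cite{linnik-quaternions}), reads
\[
t(m) = \frac{24}{\pi}\sqrt{m}\;L(1,\chi^*)\prod_{p\mid m_2}\left(1 - \frac{\chi^*(p)}{p}\right).
\]
Using $m^{\epsilon} = m_1^{\epsilon} m_2^{2\epsilon} \ge 4^{-\epsilon}\,d^{\epsilon}\prod_{p\mid m_2}p^{2\epsilon}$, bounding each local factor by $p^{2\epsilon}\left(1 - \frac{\chi^*(p)}{p}\right) \ge \min\left(1, p^{2\epsilon}\left(1-\frac1p\right)\right)$ (treating $\chi^*(p)=1$ and $\chi^*(p)\in\{0,-1\}$ separately), and then enlarging the finite product over $p\mid m_2$ to the product over all odd primes, we get
\[
\frac{t(m)}{m^{1/2-\epsilon}} \ge \frac{24}{\pi}\cdot 4^{-\epsilon}\left(d^{\epsilon}L(1,\chi^*)\right)\prod_{p\text{ odd prime}}\min\left(1, p^{2\epsilon}\left(1-\frac1p\right)\right).
\]
Hence it suffices to show $d^{\epsilon}L(1,\chi^*) \ge \frac{4^{\epsilon}\epsilon}{2}$ for at least one of $m, m'$. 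Since $m, m'$ have distinct squarefree parts and a positive squarefree integer is recovered from its imaginary quadratic field, the primitive characters $\chi^*_m$ and $\chi^*_{m'}$ are distinct, with distinct conductors.

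\textbf{The $L$-value estimate.} For conductors $d$ below an explicit threshold I use the elementary bound $L(1,\chi^*) \ge \frac{\pi}{3}d^{-1/2}$, which follows from $h(-d) \ge 1$ and $w \le 6$ in the analytic class number formula; together with $d^{\epsilon} \ge 1$ this already yields $d^{\epsilon}L(1,\chi^*) \ge \frac{4^{\epsilon}\epsilon}{2}$ once $d$ is at most a constant times $\epsilon^{-2}$. For the remaining conductors I run the Siegel--Tatuzawa machine directly on the pair $\chi^*_m, \chi^*_{m'}$: the Dirichlet series $\zeta(s)L(s,\chi^*_m)L(s,\chi^*_{m'})L(s,\chi^*_m\chi^*_{m'})$ is the Dedekind zeta function of the biquadratic field $\mathbb{Q}(\sqrt{-m_1},\sqrt{-m_1'})$ --- the product character $\chi^*_m\chi^*_{m'}$ is genuinely non-principal because $\chi^*_m \ne \chi^*_{m'}$ --- so it has non-negative Dirichlet coefficients, constant term $1$, and a simple pole at $s = 1$ with residue $L(1,\chi^*_m)L(1,\chi^*_{m'})L(1,\chi^*_m\chi^*_{m'})$. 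Evaluating this series at a real point $\beta = 1-\delta$, or near a putative Siegel zero of one of the factors, bounding the partial sum below by the constant term and the entire part above by a convexity estimate in terms of the discriminant of the biquadratic field (at most a fixed power of $d_m d_{m'}$), and optimizing the test parameters as in Hoffstein \cite{hoffstein}, one concludes that at most one real primitive character can violate $d^{\epsilon}L(1,\chi^*) \ge \frac{4^{\epsilon}\epsilon}{2}$. Since $\chi^*_m \ne \chi^*_{m'}$, one of $m, m'$ then satisfies it, and the reduction above completes the proof.

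\textbf{The main obstacle.} The delicate point is the range of conductors. The textbook Siegel--Tatuzawa theorem is effective only for conductors at least $e^{1/\epsilon}$, whereas the elementary $d^{-1/2}$ bound is strong enough only for conductors up to about $\epsilon^{-2}$, leaving a wide gap --- exactly the gap that normally forces the hypothesis that the squarefree parts of $m, m'$ exceed $e^{1/\epsilon}$. Closing it means pushing the contour argument through for conductors below $e^{1/\epsilon}$, where the smallness of $\epsilon$ is what keeps the exponential factors $d^{\pm\epsilon}$ and the resulting error terms under control, and where one must track every absolute constant --- in Gauss's formula (the $24$ there is what supplies the needed factor of $2$ of room over the Siegel--Tatuzawa constant $0.655$), in the class number formula, and in the convexity bound --- so that the final constant comes out to $\frac{12}{\pi}$ rather than something marginally smaller. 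The analytic inputs are standard; the only novelty, as the introduction notes, is assembling them so as to dispense with the large-conductor hypothesis.
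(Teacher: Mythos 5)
Your proposal follows essentially the same route as the paper: Gauss's formula reduces the claim to a Tatuzawa-type bound $d^{\epsilon}L(1,\chi^*)\gg\epsilon$ for at least one of the two primitive characters, the small-conductor range is disposed of by the trivial class-number bound (which is exactly how the paper deduces $d,d'\ge 4/\epsilon^2$ under the contradiction hypothesis), and the pair is then handled through positivity of the Dedekind zeta function of the biquadratic field. The paper implements the step you leave as a sketch by quoting an explicit inequality of Ji--Lu for the product of $L$-values at a real point where that zeta function is nonpositive, together with Kadiri's explicit result locating $\min(\beta,\beta')\le 1-\tfrac{1}{K\log(16dd')}$, and your diagnosis of why the argument survives conductors below $e^{1/\epsilon}$ (the factor $\epsilon\, d^{-c\epsilon}\le C/\log d$ beats the $1/\log(dd')$ loss from the zero location) matches what the paper actually does.
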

\begin{proof} It's well known that for any positive $m$ which is not a multiple of $4$ and not congruent to $7\pmod{8}$, we have
\[
t(m) = \frac{24\sqrt{m}}{\pi}\begin{cases}L(1,\chi_{-4m}) & \mbox{if } m\equiv 1,2\pmod{4}\\L(1,\chi_{-m}) & \mbox{if } m\equiv 3\pmod{8}\end{cases},
\]
where $\chi_{-4m}(n) = \left(\frac{-4m}{n}\right), \chi_{-m}(n) = \left(\frac{-m}{n}\right)$. For simplicity we restrict to the case $m, m' \equiv 1,2 \pmod{4}$ for the remainder of the proof (the proof is almost unchanged in the case that one or both is congruent to $3$ modulo $8$).

Write $m = dk^2, m' = d'k'^2$, with $d, d'$ squarefree. Then by assumption we have $d \ne d'$. From Lemma 2 of \cite{jilu}, for any $1-\epsilon < s < 1$ such that $L(s,\chi_{-4d}) \ge 0$ and for any $x \ge 1$ we have
\[
L(1,\chi_{-4d}) \ge (1-s)\left(\frac{1}{x^{1-s}}\left(\frac{\pi^2}{6}-\frac{4}{\lfloor\sqrt{x}\rfloor}\right)-\frac{8d}{x^{\frac{3}{2}}}\frac{\zeta(3/2)^2 3!}{5\pi^2}\right).
\]
Plugging in $x = 10^4d$, and using $(10^4)^{1-s} \le 10^{4\epsilon} < 1.01$, we see that
\[
L(1,\chi_{-4d}) \ge \frac{3}{2}\frac{1-s}{d^{1-s}}.
\]
Taking $s = 1-\frac{\epsilon}{3}$, we see that either we have
\[
L(1,\chi_{-4d}) \ge \frac{\epsilon}{2d^{\frac{\epsilon}{3}}} \ge \frac{\epsilon}{2d^{\epsilon}},
\]
or else $L(1-\frac{\epsilon}{3},\chi_{-4d}) < 0$, in which case there is some $\beta$ with $1-\frac{\epsilon}{3} < \beta < 1$ such that $L(\beta,\chi_{-4d}) = 0$.

Assume for contradiction that $L(1,\chi_{-4d}) < \frac{\epsilon}{2d^{\epsilon}}$ and $L(1,\chi_{-4d'}) < \frac{\epsilon}{2d'^{\epsilon}}$. Note that this implies $d, d' \ge \frac{4}{\epsilon^2} > 10^6$. By the above we can find $1-\frac{\epsilon}{3} < \beta, \beta' < 1$ with $L(\beta,\chi_{-4d}) = L(\beta',\chi_{-4d'}) = 0$. From \cite{kadiri}, we have
\[
\min(\beta, \beta') \le 1 - \frac{1}{K\log(16dd')}
\]
with $K = 2.0452$. Let $\mathbb{Q}(\sqrt{D})$ be the real quadratic subfield of $\mathbb{Q}(\sqrt{-d},\sqrt{-d'})$, $D$ a fundamental discriminant dividing $4dd'$. Take $s = \min(\beta,\beta')$, so that
\[
1-\frac{\epsilon}{3} < s < 1 - \frac{1}{K\log(16dd')}.
\]
By Lemma 2 of \cite{jilu} again, for $x \ge 1$ we have
\[
L(1,\chi_{-4d})L(1,\chi_{-4d'})L(1,\chi_D) \ge (1-s)\left(\frac{1}{x^{1-s}}\left(\frac{\pi^2}{6}-\frac{6}{\lfloor\sqrt{x}\rfloor}\right)-\frac{32d^2d'^2}{x^{\frac{3}{2}}}\frac{\zeta(3/2)^4 5!}{13\pi^4}\right).
\]
Plugging in $x = 10^4(dd')^{\frac{3}{2}}$, and using the bound $L(1,\chi_D) \le \frac{\log(4dd')+1.44}{2}$ from \cite{jilu}, we get
\[
\frac{\epsilon^2}{4(dd')^{\epsilon}} > L(1,\chi_{-4d})L(1,\chi_{-4d'}) \ge \frac{3(1-s)}{(dd')^{\frac{3}{2}(1-s)}(\log(4dd')+1.44)}.
\]
We claim that this is a contradiction. Since the right hand side is a unimodal function of $s$, it suffices to check that this is impossible when $s$ is either $1-\frac{\epsilon}{3}$ or $1-\frac{1}{K\log(16dd')}$. When $s = 1-\frac{\epsilon}{3}$, we get
\[
\frac{\epsilon}{4(dd')^{\frac{\epsilon}{2}}} > \frac{1}{\log(4dd')+1.44},
\]
but the left hand side is easily seen to be at most $\frac{1}{2e\log(dd')}$, so this case is impossible for $dd' \ge 10^{12}$. When $s = 1-\frac{1}{K\log(16dd')}$, we get
\[
\left(\frac{\epsilon}{2(dd')^{\frac{\epsilon}{2}}}\right)^2 > \frac{3}{e^{\frac{3}{2K}}K\log(16dd')(\log(4dd')+1.44)}.
\]
Since the left hand side is at most $\left(\frac{1}{e\log(dd')}\right)^2$, we easily see that this is impossible for $dd' \ge 10^{12}$. This completes the contradiction.

Thus, we may assume without loss of generality that $L(1,\chi_{-4d}) \ge \frac{\epsilon}{2d^{\epsilon}}$. From
\[
L(s,\chi_{-4m}) = L(s,\chi_{-4d})\prod_{p\mid k,\ p\text{ prime}} \left(1-\frac{\chi_{-4d}(p)}{p^s}\right),
\]
we have
\[
\frac{L(1,\chi_{-4m})}{L(1,\chi_{-4d})} \ge \prod_{p\mid k,\ p\text{ prime}} \left(1-\frac{1}{p}\right) \ge \frac{1}{k^{2\epsilon}}\prod_{p\text{ odd prime}} \min\left(1,p^{2\epsilon}\left(1-\frac{1}{p}\right)\right)
\]
Putting these together we get the desired inequality.
\end{proof}

Finally, we need to prove an upper bound on the number of \emph{conjugate pairs}, i.e. pairs $(L_1, L_2)$ such that
\[
l+L_1 = QU_1,\ l+L_2 = \bar{Q}U_2,
\]
where $Q$ is a quaternion of norm $5^s$.
Linnik and Maly\v{s}ev \cite{linnik-quaternions} cleverly transform this problem into a question of bounding the number of representations of a binary quadratic form as a sum of three squares of linear forms. The situation is somewhat complicated by the fact that the representations we need to consider may not be ``proper'', and furthermore that the discriminants involved need not be squarefree.

\begin{lem}\label{square-roots} The number of solutions $x$ to the congruence
\[
x^2 \equiv a \pmod{b}
\]
is at most $2^{1+\omega\left(\frac{b}{(a,b)}\right)}(a,b)^{[\frac{1}{2}]}$.
\end{lem}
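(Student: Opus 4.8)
The plan is to reduce to prime powers via the Chinese Remainder Theorem and then run an elementary $p$-adic analysis (with the convention $v_p(0)=\infty$, so that the degenerate case $a=0$ is covered). Write $N(n)$ for the number of solutions modulo $n$ of $x^2\equiv a\pmod n$, and $b=\prod_p p^{e_p}$; then $N(b)=\prod_{p\mid b}N(p^{e_p})$, so it suffices to bound each factor and multiply. The crux is the per-prime-power estimate
\[
N(p^{e})\le \kappa_p\cdot p^{\lfloor \min(v_p(a),e)/2\rfloor},
\]
valid for every prime power $p^e$, where $\kappa_p=1$ if $p^e\mid a$, $\kappa_p=2$ if $p$ is odd and $p^e\nmid a$, and $\kappa_p=4$ if $p=2$ and $2^e\nmid a$. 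Granting this, the lemma is bookkeeping: one has $\prod_{p\mid b}p^{\lfloor\min(v_p(a),e_p)/2\rfloor}=(a,b)^{[\frac{1}{2}]}$, the primes $p$ with $p^{e_p}\nmid a$ are exactly those dividing $\frac{b}{(a,b)}$, and hence $\prod_{p\mid b}\kappa_p\le 2^{1+\omega(b/(a,b))}$, with the extra factor of $2$ over $2^{\omega(b/(a,b))}$ appearing precisely when $2\mid\frac{b}{(a,b)}$ (i.e.\ $v_2(b)>v_2(a)$); this is the source of the $1+$ in the exponent.

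To prove the per-prime-power estimate I would split into three cases. If $p^e\mid a$, the congruence is $x^2\equiv 0\pmod{p^e}$, whose solutions modulo $p^e$ are exactly the multiples of $p^{\lceil e/2\rceil}$, so $N(p^e)=p^{\lfloor e/2\rfloor}=p^{\lfloor\min(v_p(a),e)/2\rfloor}$ and $\kappa_p=1$ suffices. If $d:=v_p(a)<e$ and $d$ is odd, there are no solutions, since any solution $x$ is nonzero and satisfies $v_p(x^2)=v_p(a)=d$, impossible for a square. If $d<e$ and $d=2f$, then any solution has $v_p(x)=f$ exactly, so writing $x=p^f y$ with $p\nmid y$ the congruence becomes $y^2\equiv a\,p^{-2f}\pmod{p^{e-2f}}$ (note $e-2f\ge 1$); since $x$ modulo $p^e$ corresponds to $y$ modulo $p^{e-f}$ while the constraint on $y$ depends only on $y$ modulo $p^{e-2f}$, each residue of $y$ solving it lifts to $p^{f}$ values of $x$, giving $N(p^e)\le p^{f}\cdot(\text{number of square roots of a unit modulo }p^{e-2f})$. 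The last quantity is at most $2$ for odd $p$ (the order of the $2$-torsion subgroup of the cyclic group $(\mathbb{Z}/p^{e-2f})^{\times}$) and at most $4$ for $p=2$ (the $2$-torsion of $(\mathbb{Z}/2^{m})^{\times}$ has order $1$, $2$, $4$ for $m=1$, $m=2$, $m\ge 3$ respectively). Since $\lfloor\min(d,e)/2\rfloor=f$ here, this is exactly the claimed bound with $\kappa_p=2$ or $4$.

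The argument is entirely elementary, and I do not anticipate a genuine obstacle; the only points requiring care are that $2$ and the odd primes must be treated separately, because a unit has up to four square roots modulo $2^m$ but only two modulo $p^m$ for odd $p$ — this asymmetry is precisely what produces the sharp constant $2^{1+\omega(b/(a,b))}$ rather than $2^{\omega(b/(a,b))}$ or $4^{\omega(b/(a,b))}$ — and the lifting step, where one must correctly count that a solution $y$ pinned down only modulo $p^{e-2f}$ accounts for $p^{f}$ distinct values of $x=p^{f}y$ modulo $p^{e}$.
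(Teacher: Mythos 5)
Your proof is correct and follows essentially the same route as the paper's: reduce to prime powers by the Chinese Remainder Theorem, handle the case $p^e\mid a$ by counting multiples of $p^{\lceil e/2\rceil}$, and otherwise factor out $p^{v_p(a)/2}$ and count square roots of a unit (at most $2$ for odd $p$, at most $4$ for $p=2$), with the $p^f$-fold lifting accounting for the factor $(a,b)^{[\frac{1}{2}]}$. You are slightly more explicit than the paper about the odd-valuation case and the final bookkeeping, but there is no substantive difference.
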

\begin{proof} First consider the case $b = p^k$, for $p$ a prime. Let $p^l \mid\mid a$. If $l \ge k$, then any solutions $x$ are multiples of $p^{\lceil \frac{k}{2}\rceil}$, so the number of solutions $x$ is at most $p^{\lfloor \frac{k}{2}\rfloor} = b^{[\frac{1}{2}]} = (a,b)^{[\frac{1}{2}]}$. So we may assume without loss of generality that $l < k$ and $l$ is even, say $l = 2m$.

Now we have $x = p^my$, $y^2 \equiv \frac{a}{p^l} \pmod{p^{k-l}}$. Each solution $y$ modulo $p^{k-l}$ gives rise to $p^m = (a,b)^{[\frac{1}{2}]}$ distinct solutions $x$ modulo $p^k$. If $p$ is odd, there are at most two such $y$ modulo $p^{k-l}$, and if $p$ is $2$ there are at most four such solutions $y$ modulo $2^{k-l}$.

Putting the above together using the Chinese Remainder Theorem, we obtain the Lemma.
\end{proof}

\begin{lem}\label{gauss-ternary} Let $\phi = px^2+2qxy+ry^2$ be a quadratic form with determinant $d = pr-q^2$. A representation of $\phi$ as a sum of three squares of linear forms
\[
\phi(x,y) = (a_1x+b_1y)^2 + (a_2x+b_2y)^2 + (a_3x+b_3y)^2
\]
is called \emph{proper} if
\[
(a_2b_3-a_3b_2, a_3b_1-a_1b_3, a_1b_2-a_2b_1) = 1.
\]
The number of proper representations of $\phi$ as a sum of three squares is $0$ if $(p,q,r) \ne 1$, and otherwise it is at most
\[
48\cdot 2^{\omega(d)}.
\]
\end{lem}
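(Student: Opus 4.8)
The plan is to reformulate proper representations of $\phi = px^2+2qxy+ry^2$ in terms of sublattices of $\mathbb{Z}^3$ and to count them via the orthogonal complement, following Gauss's theory of ternary forms (the kind of argument used in Linnik--Maly\v{s}ev \cite{linnik-quaternions}). A representation $\phi(x,y)=\sum_{i=1}^3(a_ix+b_iy)^2$ is the same as a $3\times 2$ integer matrix $M$ with rows $(a_i,b_i)$ and $M^{\mathrm{T}}M=\begin{pmatrix}p&q\\q&r\end{pmatrix}$; writing $v_1,v_2$ for its columns and $w=v_1\times v_2\in\mathbb{Z}^3$ for their cross product, one has $|w|^2=pr-q^2=d$, and the representation is proper exactly when $w$ is a primitive vector (its coordinates are the $2\times 2$ minors of $M$). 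Thus a proper representation is a primitive embedding of $L_\phi:=(\mathbb{Z}^2,\phi)$ into $\mathbb{Z}^3$. I would first dispose of the case $\gcd(p,q,r)\ne 1$: if a prime $\ell\mid\gcd(p,q,r)$ then over $\mathbb{F}_\ell$ the columns $v_1,v_2$ span a totally isotropic subspace of $(\mathbb{F}_\ell^3,\,x^2+y^2+z^2)$, and since this form has no totally isotropic plane (treating $\ell=2$ by hand) $v_1,v_2$ are proportional mod $\ell$, so $\ell$ divides every minor and there is no proper representation. I would also record that $\gcd(p,q,r)=1$ is precisely the condition that the discriminant group $A:=L_\phi^{\ast}/L_\phi$ is \emph{cyclic} (of order $d$), which will be crucial.

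Next I would pass to the orthogonal complement. Since $L_\phi$ embeds primitively, its image $\Lambda=\langle v_1,v_2\rangle$ satisfies $\Lambda=w^\perp\cap\mathbb{Z}^3$ and $\mathbb{Z}w=\Lambda^\perp\cap\mathbb{Z}^3$; the complement of a primitive sublattice in a unimodular lattice has the same determinant, so $|w|^2=\det\Lambda=d$, and $\mathbb{Z}^3$ is an overlattice of $\Lambda\oplus\mathbb{Z}w$. The group $O(\mathbb{Z}^3)$ of signed permutation matrices (order $48$) acts on the set of proper representations with orbits of size at most $48$, so it suffices to bound the number of proper representations \emph{up to $O(\mathbb{Z}^3)$} by $2^{\omega(d)}$.

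For this last bound I would invoke the classical enumeration of primitive embeddings (the ``gluing'' part of Nikulin's theory of primitive embeddings, in its form for odd lattices): since $\mathbb{Z}^3$ is the only positive definite unimodular lattice of rank $3$ and the complement is forced to be the rank-one lattice spanned by a vector of norm $d$, primitive embeddings $L_\phi\hookrightarrow\mathbb{Z}^3$ up to $O(\mathbb{Z}^3)$ correspond to isomorphisms $A\xrightarrow{\ \sim\ }(\mathbb{Z}w)^{\ast}/\mathbb{Z}w\cong\mathbb{Z}/d$ reversing the $\mathbb{Q}/\mathbb{Z}$-valued discriminant bilinear form, taken modulo the action of $O(\mathbb{Z}w)=\{\pm 1\}$. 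Fixing a generator of $A$ on which the discriminant form takes the value $c/d$ with $c\in(\mathbb{Z}/d)^{\times}$ (a unit exactly because $\gcd(p,q,r)=1$ makes $A$ cyclic with non-degenerate discriminant form, and depending only on $\phi$), such an isomorphism is multiplication by a residue $k$ with $k^2\equiv -c\pmod{d}$, the single sign arising because the glue subgroup of an overlattice is isotropic. Since $\gcd(-c,d)=1$, Lemma \ref{square-roots} gives at most $2^{1+\omega(d)}$ such $k$ modulo $d$, and quotienting by $k\mapsto -k$ leaves at most $2^{\omega(d)}$ embeddings up to $O(\mathbb{Z}^3)$; multiplying by the orbit-size bound $48$ gives $48\cdot 2^{\omega(d)}$.

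The step I expect to be the main obstacle is this last one: justifying the correspondence between primitive embeddings up to $O(\mathbb{Z}^3)$ and glue data for these \emph{odd} lattices (Nikulin's theorem is usually stated for even lattices), which requires working with the discriminant bilinear form valued in $\mathbb{Q}/\mathbb{Z}$ and handling the prime $2$ carefully, together with the ``uniqueness'' direction --- that two primitive embeddings with the same glue data differ by an element of $O(\mathbb{Z}^3)$ (this is where the rigidity of $\mathbb{Z}^3$, the unique positive definite unimodular lattice of rank $3$, enters). Pinning down that the governing congruence is $k^2\equiv -c\pmod{d}$ with $c$ a genuine unit modulo $d$ is exactly what removes the factor $(a,b)^{[\frac12]}$ from Lemma \ref{square-roots} and leaves a single sign rather than $\pm c$, hence keeps the constant at $48$ rather than $96$ or worse. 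Everything else --- the matrix reformulation, the $\gcd(p,q,r)\ne 1$ case, and the orbit-size bound --- is routine.
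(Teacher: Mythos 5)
Your proposal is correct, and it reaches the bound by a genuinely different (more structural) route than the paper. The paper follows Venkov: complete the $3\times 2$ matrix $(a_i\,|\,b_i)$ to a determinant-one $3\times 3$ matrix, pass to the adjoint ternary form, and read off the congruences $M^2\equiv -r$, $MN\equiv q$, $N^2\equiv -p\pmod d$; the pair $(M,N)$ mod $d$ is well defined, is determined up to at most $2^{1+\omega(d)}$ choices when $(p,q,r)=1$, and the factor $24$ (proper rotations of $x^2+y^2+z^2$) gives $24\cdot 2^{1+\omega(d)}=48\cdot 2^{\omega(d)}$. Your gluing argument is the coordinate-free version of exactly this: the residues $M,N$ are the glue data, your congruence $k^2\equiv -c\pmod d$ is their avatar, and the cyclicity of $A_{L_\phi}$ is where $(p,q,r)=1$ enters in both treatments (the paper instead derives $(M,N,d)\ne 1$ directly from the congruences). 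The bookkeeping differs only in where the factor of $2$ sits ($48\times 2^{\omega(d)}$ versus $24\times 2^{1+\omega(d)}$). One reassurance on the step you flag as the main obstacle: for the upper bound you only need the \emph{injectivity} direction of the gluing correspondence, and that direction is elementary and does not require evenness. Given two primitive embeddings whose glue maps agree up to sign, the isometry of $\iota_1(L_\phi)\oplus\mathbb{Z}w_1\rightarrow\iota_2(L_\phi)\oplus\mathbb{Z}w_2$ sending $\iota_1(v)\mapsto\iota_2(v)$ and $w_1\mapsto\pm w_2$ carries the isotropic glue subgroup $H_1$ to $H_2$, hence extends to the overlattices, i.e.\ to an element of $O(\mathbb{Z}^3)$ intertwining the embeddings; only the $\mathbb{Q}/\mathbb{Z}$-valued bilinear discriminant form is used, so none of the even-lattice hypotheses of Nikulin's existence/classification theorems are needed. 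The remaining details you treat as routine (the minor criterion for primitivity, the totally isotropic argument for $(p,q,r)\ne 1$ with the prime $2$ handled via the nondegeneracy of the dot product on $\mathbb{F}_2^3$, and the fact that $c$ is a unit because the discriminant form of a cyclic discriminant group is nondegenerate) all check out. Your approach buys a cleaner conceptual picture and generalizes readily; the paper's buys brevity by citing Venkov for the key invariance of $(M,N)$ mod $d$.
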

\begin{proof} We will follow the argument from Venkov \cite{venkov}, Chapter $4$, section $14$. If we transform the quadratic form $x^2+y^2+z^2$ by a matrix
\[
\left(\begin{array}{ccc} a_1 & b_1 & c_1 \\ a_2 & b_2 & c_2 \\ a_3 & b_3 & c_3 \end{array}\right),
\]
with determinant $1$ then we get a positive definite ternary quadratic form $g$ of determinant $1$ with matrix
\[
\left(\begin{array}{ccc} p & q & m \\ q & r & n \\ m & n & s \end{array}\right).
\]
Let the adjoint $G$ of $g$ have the matrix
\[
\left(\begin{array}{ccc} P & Q & M \\ Q & R & N \\ M & N & d \end{array}\right).
\]
Venkov checks that if we vary the choice of $c_1,c_2,c_3$, the values $M,N$ vary by multiples of $d$ \cite{venkov}. Furthermore, they satisfy
\[
Pd - M^2 = r,\ Rd - N^2 = p,\ Qd-MN = -q.
\]
Taking these equations mod $d$, we find $M^2 \equiv -r, MN \equiv q, N^2 \equiv -p \pmod{d}$. Now if $(p,q,r) \ne 1$ we find that $(M,N,d) \ne 1$, contradicting the fact that $G$ has determinant $1$. Otherwise, we easily see that there are at most $2^{1+\omega(d)}$ ordered pairs $(M,N)$ mod $d$. Since the number of determinant $1$ automorphisms of the ternary form $x^2+y^2+z^2$ is $24$, we obtain the Lemma.
\end{proof}

\begin{lem}\label{note13}[Note 13 of \cite{linnik-quaternions}] Let $\phi(x,y) = px^2+2qxy+ry^2$ be a quadratic form with determinant $d = pr-q^2$, and put $\delta = (p,q,r)$. The number of representations of $\phi$ as a sum of three squares of linear forms
\[
\phi(x,y) = (a_1x+b_1y)^2 + (a_2x+b_2y)^2 + (a_3x+b_3y)^2
\]
satisfying
\begin{itemize}
\item[\rm a)] $(a_1,a_2,a_3) = 1$,
\item[\rm b)] $\delta \mid (a_2b_3-a_3b_2, a_3b_1-a_1b_3, a_1b_2-a_2b_1)$,
\end{itemize}
is at most
\[
96\cdot 2^{\omega(p)}\tau\left(\frac{d}{\delta^2}\right)\left(\frac{d}{\delta^2},p\right)^{[\frac{1}{2}]}.
\]
\end{lem}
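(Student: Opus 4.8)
Following Linnik and Maly\v{s}ev, the plan is to strip off the ``content'' of the pair of coefficient vectors and reduce to the proper representations already counted in Lemma~\ref{gauss-ternary}. Given a representation $\phi(x,y) = \sum_{i=1}^{3}(a_ix+b_iy)^2$ satisfying (a) and (b), write $\alpha = (a_1,a_2,a_3)$ and $\beta = (b_1,b_2,b_3)$ in $\mathbb{Z}^3$, so that $\alpha\cdot\alpha = p$, $\alpha\cdot\beta = q$, and $\beta\cdot\beta = r$, with $\alpha$ primitive by (a). By the Cauchy--Binet identity the cross product $\gamma = \alpha\times\beta$ satisfies $|\gamma|^2 = pr-q^2 = d$, and its coordinates are, up to sign, the three minors $a_2b_3-a_3b_2$, $a_3b_1-a_1b_3$, $a_1b_2-a_2b_1$ occurring in (b). Let $e$ be the gcd of those minors; then (b) says exactly $\delta\mid e$, and $e^2\mid d$ since $e$ divides each coordinate of $\gamma$. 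Put $\gamma_0 = \gamma/e$, a primitive vector with $|\gamma_0|^2 = d/e^2$, and let $\Lambda = \{v\in\mathbb{Z}^3 : v\cdot\gamma_0 = 0\}$, a rank-two lattice of covolume $\sqrt{d/e^2}$.

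Both $\alpha$ and $\beta$ lie in $\Lambda$, and $\mathbb{Z}\alpha+\mathbb{Z}\beta$ has index $e$ in $\Lambda$ (compare covolumes). Since $\alpha$ is primitive in $\mathbb{Z}^3$, hence in $\Lambda$, I would extend it to a basis $\{\alpha,\beta'\}$ of $\Lambda$ with $\alpha\times\beta' = \gamma_0$; then $\beta = x\alpha+e\beta'$ for some integer $x$, which after replacing $\beta'$ by $\beta'+k\alpha$ we may take in $[0,e)$. One checks that $e$, $x$, and $\beta'$ are then uniquely determined by the original representation (the normalization of $x$ pins down $\beta'$ because $\alpha$ is primitive modulo $e$). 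Setting $q_0 = \alpha\cdot\beta'$ and $r_0 = |\beta'|^2$, the pair $(\alpha,\beta')$ is a \emph{proper} representation of the binary quadratic form $\psi = pX^2+2q_0XY+r_0Y^2$ as a sum of three squares of linear forms --- proper because $\alpha\times\beta' = \gamma_0$ is primitive --- and $\psi$ has determinant $pr_0-q_0^2 = d/e^2$. Expanding $\alpha\cdot\beta$ and $\beta\cdot\beta$ gives
\[
q = xp+eq_0, \qquad r = x^2p+2xeq_0+e^2r_0,
\]
so that $\psi$ is determined by $\phi$ together with the pair $(e,x)$; in particular $eq_0 = q-xp$ forces $xp\equiv q\pmod e$, which has at most $\gcd(p,e)$ solutions $x\in[0,e)$.

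Conversely, any triple $(e,x,(\alpha,\beta'))$ with $\delta\mid e$, $e^2\mid d$, with $x\in[0,e)$ satisfying $xp\equiv q\pmod e$, and with $(\alpha,\beta')$ a proper representation of the corresponding form $\psi$ yields a representation of $\phi$ satisfying (a) and (b) via $\beta = x\alpha+e\beta'$, and this inverts the construction above; forms $\psi$ of content greater than $1$ contribute nothing, by Lemma~\ref{gauss-ternary}. Bounding the number of proper representations of each $\psi$ by $48\cdot 2^{\omega(d/e^2)}$ using Lemma~\ref{gauss-ternary}, we conclude that the number of representations in the statement is at most
\[
\sum_{\substack{\delta\mid e\\ e^2\mid d}}\gcd(p,e)\cdot 48\cdot 2^{\omega(d/e^2)}.
\]

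It then remains to show that this divisor sum is at most $96\cdot 2^{\omega(p)}\,\tau(d/\delta^2)\,(d/\delta^2,p)^{[\frac{1}{2}]}$. Writing $e = \delta g$, the constraints become $g^2\mid d/\delta^2$, and one has $\gcd(\delta g,p) = \delta\gcd(g,p/\delta)$ and $2^{\omega(d/e^2)}\le 2^{\omega(d/\delta^2)}\le\tau(d/\delta^2)$; what is left is to bound $\sum_{g^2\mid d/\delta^2}\delta\gcd(g,p/\delta)$, which can be estimated prime by prime using $\delta\mid p$ (a prime not dividing $p/\delta$ contributes essentially its local divisor count, while a prime dividing $p/\delta$ contributes a bounded multiple of the corresponding local factor of $(d/\delta^2,p)^{[\frac{1}{2}]}$, the worst case being when the actual count of representations vanishes). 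This elementary but somewhat intricate estimate --- matching the precise factor $(d/\delta^2,p)^{[\frac{1}{2}]}$ and absorbing the $\gcd(p,e)$ coming from the choices of $x$ --- is the main obstacle, and it is exactly here that the small correction to Maly\v{s}ev's version of Note~13 (the extra factor of $2$ and the precise exponent $\frac{1}{2}$ on $(d/\delta^2,p)$) is needed.
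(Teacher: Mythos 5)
Your structural reduction is essentially the same as the paper's: the cross-product/lattice picture with $\beta = x\alpha + e\beta'$ is exactly Maly\v{s}ev's substitution $b_i \equiv \lambda a_i \pmod{e}$ followed by passing to $\phi(x-\frac{\lambda}{e}y,\frac{1}{e}y)$, with your $x$ playing the role of $\lambda$. The gap is in the counting of the admissible $x\in[0,e)$, and as a consequence the divisor-sum inequality you defer to the end is actually false. You bound the number of $x$ only by the linear congruence $xp\equiv q\pmod{e}$, giving $\gcd(p,e)$ choices. But $x$ is subject to a further \emph{quadratic} constraint: since $\psi$ has determinant $pr_0-q_0^2 = d/e^2$ with $q_0=(q-xp)/e$, the integer $s=(xp-q)/e$ must satisfy $s^2\equiv -d/e^2 \pmod{p}$, and distinct admissible residues $x\pmod{e}$ produce distinct residues $s\pmod{p}$. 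Applying Lemma \ref{square-roots} to this congruence bounds the number of $x$ by $2^{1+\omega(p)}(d/e^2,p)^{[\frac{1}{2}]} \le 2^{1+\omega(p)}(d/\delta^2,p)^{[\frac{1}{2}]}$, which is precisely the paper's count; combined with the identity $\sum_{\delta\mid e,\ e^2\mid d} 2^{\omega(d/e^2)} = \tau(d/\delta^2)$ this gives the stated bound.

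Without that quadratic constraint your intermediate sum $\sum_{\delta\mid e,\ e^2\mid d}\gcd(p,e)\cdot 48\cdot 2^{\omega(d/e^2)}$ genuinely exceeds the target. Take $\phi = px^2+py^2$ with $p$ a large prime, so $q=0$, $r=p$, $d=p^2$, $\delta=p$, and the only admissible $e$ is $e=p$: your sum is $48p$, while the claimed bound is $96\cdot 2^{\omega(p)}\tau(1)\,(1,p)^{[\frac{1}{2}]} = 192$. (Indeed, in this example all $p$ residues $x$ pass the linear test, but only the at most two with $x^2\equiv-1\pmod{p}$ actually arise.) So the ``elementary but somewhat intricate estimate'' you postpone cannot be carried out as stated; the factor $(d/\delta^2,p)^{[\frac{1}{2}]}$ and the $2^{\omega(p)}$ must come from counting square roots of $-d/e^2$ modulo $p$, not from massaging the $\gcd(p,e)$ coming from the linear congruence.
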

\begin{proof} We will follow Maly\v{s}ev's proof from \cite{linnik-quaternions}, making the bounds a bit more precise as we go (note that Maly\v{s}ev incorrectly claims that we may assume $(e,\frac{p}{\delta}) = 1$ by performing a change of variables, but after such a change of variables there is no guarantee that condition a) is still satisfied). Set $e = (a_2b_3-a_3b_2, a_3b_1-a_1b_3, a_1b_2-a_2b_1)$, and write $\phi(x,y) = \delta(\alpha x^2 + 2\beta xy + \gamma y^2)$. We have
\[
d = (a_2b_3-a_3b_2)^2 + (a_3b_1-a_1b_3)^2 + (a_1b_2-a_2b_1)^2,
\]
so $e^2\mid d$. Since $(a_1,a_2,a_3) = 1$ and $a_2b_3-a_3b_2 \equiv a_3b_1-a_1b_3 \equiv a_1b_2-a_2b_1 \equiv 0 \pmod{e}$, we have
\[
b_i \equiv \lambda a_i \pmod{e}
\]
for some $0 \le \lambda < e$. Thus each linear form $a_i(x-\frac{\lambda}{e}y) + b_i(\frac{1}{e}y)$ has integer coefficients, so we can conclude that $\phi(x-\frac{\lambda}{e}y,\frac{1}{e}y)$ has integer coefficients (with even coefficients on the cross terms) and is \emph{properly} represented as a sum of three squares of linear forms. Since it has integer coefficients, we have the congruences
\begin{align*}
\alpha \lambda - \beta &\equiv 0 \pmod{\frac{e}{\delta}},\\
(\alpha \lambda - \beta)^2 &\equiv -\frac{d}{\delta^2} \pmod{\alpha\frac{e^2}{\delta}}.
\end{align*}
If we write $\alpha\lambda - \beta = \frac{e}{\delta}t$, we find that $t^2 \equiv -\frac{d}{e^2}\pmod{\alpha\delta}$, and there is at most one value of $\lambda \pmod{e}$ corresponding to a given value of $t\pmod{\alpha\delta}$. Since $\alpha\delta = p$, we can apply Lemma \ref{square-roots} to see that there are at most
\[
2^{1+\omega\left(\frac{\displaystyle p}{\left(\frac{d}{e^2},p\right)}\right)}\left(\frac{d}{e^2},p\right)^{[\frac{1}{2}]} \le 2^{1+\omega(p)}\left(\frac{d}{\delta^2},p\right)^{[\frac{1}{2}]}
\]
choices of $\lambda$ for a given choice of $e$. For each choice of $e$ and $\lambda$, there are at most
\[
48\cdot 2^{\omega(\frac{d}{e^2})}
\]
proper representations of $\phi(x-\frac{\lambda}{e}y,\frac{1}{e}y)$ as a sum of three squares of linear forms by Lemma \ref{gauss-ternary}. Since the sum of $2^{\omega(\frac{d}{e^2})}$ over all $e$ such that $\delta\mid e$ and $e^2\mid d$ is $\tau\left(\frac{d}{\delta^2}\right)$, we are done.
\end{proof}

\begin{defn} If $L_1, L_2$ are two proper integral vectors of the same norm, we define (following \cite{akshay}) $\Lambda_{L_1\rightarrow L_2}$ to be
\[
\Lambda_{L_1\rightarrow L_2} = \{\lambda\in \mathbb{B}\mid L_1\lambda = \lambda L_2\}.
\]
This is a left $\mathbb{Z}[L_1]$-module and a right $\mathbb{Z}[L_2]$-module. Let $A,C$ be a $\mathbb{Z}$-basis of $\Lambda_{L_1\rightarrow L_2}$. Then by Note 8 of Chapter 1 of \cite{linnik-quaternions}, we can write (possibly after changing the sign of $A$)
\[
\bar{A}C = b + L_2
\]
for $b$ an integer. Set $a = \mbox{Nm}(A), c = \mbox{Nm}(C)$. The quadratic form $ax^2 + 2bxy + cy^2$ of determinant $\mbox{Nm}(L_2)$, abbreviated as $(a,b,c)$, is called the \emph{form that directs the angle} $(L_2, L_1)$.
\end{defn}

Linnik \cite{linnik-quaternions} divides up the conjugate pairs based on whether the form directing the angle between them takes on values smaller than $6m^{\frac{1}{2}-\tau}$. If it doesn't take small values, he refers to the form as a ``junior'' form, while if it does take on small values he refers to the form as a ``senior'' form. The hardest part of the argument is finding a good bound on the number of conjugate pairs directed by senior forms. Our treatment of the case of senior forms will differ from Linnik's in that we shall further split into cases in order to achieve an asymptotically better bound.

\begin{lem}[(Bound on the number of conjugate pairs directed by junior forms)]\label{junior} Let $0 < \tau < \frac{1}{2}$ be a fixed real number. Let $s$ be an integer such that $m^{\frac{1}{2}+\tau} \le 5^s < 5m^{\frac{1}{2}+\tau}$. Let $l$ be an integer such that $5^s\mid\mid l^2+m$. Suppose that we have
\begin{align*}
l + L_0 &= QU_0\\
l + L_1 &= \bar{Q}U_1\\
l + L_2 &= \bar{Q}U_2
\end{align*}
where each $L_i$ is a proper integral vector of norm $m$, $Q$ is a proper integral quaternion of norm $5^s$, and the quaternions $U_i$ are integral quaternions. Let $(a_1,b_1,c_1)$ and $(a_2,b_2,c_2)$ be reduced quadratic forms (satisfying $2|b_i| \le c_i \le a_i$) directing the angles $(L_0,L_1)$ and $(L_0,L_2)$, respectively. If $c_i \ge 6m^{\frac{1}{2}-\tau}$ and $m^{\tau} > \frac{8\sqrt{3}}{3}$, then $L_1 = L_2$.
\end{lem}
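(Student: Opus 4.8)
The plan is to convert the desired equality $L_1=L_2$ into a divisibility estimate, and then bound the relevant distance using the reduction theory of binary quadratic forms together with Linnik's quaternionic description of conjugate pairs. First I would extract a divisibility constraint. Subtracting $l+L_1=\bar Q U_1$ from $l+L_2=\bar Q U_2$ gives $L_1-L_2=\bar Q(U_1-U_2)$, so $\bar Q$ is a left divisor of the vector $L_1-L_2$, and hence $5^s=\mathrm{Nm}(\bar Q)$ divides $\mathrm{Nm}(L_1-L_2)=\mathrm{Nm}(\bar Q)\mathrm{Nm}(U_1-U_2)$. Since $L_1,L_2$ are vectors of norm $m$ we have $\mathrm{Nm}(L_1-L_2)=2m-2\langle L_1,L_2\rangle\ge 0$ for the standard inner product $\langle\cdot,\cdot\rangle$ on vectors, so $\mathrm{Nm}(L_1-L_2)$ is a nonnegative multiple of $5^s$; it therefore suffices to prove the strict bound $\mathrm{Nm}(L_1-L_2)<5^s$, since then $\mathrm{Nm}(L_1-L_2)=0$ and $L_1=L_2$.

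Next I would record the sizes forced by the junior hypothesis. Writing $(a_i,b_i,c_i)$ for the reduced form directing $(L_0,L_i)$, we have $2|b_i|\le c_i\le a_i$ and $a_ic_i-b_i^2=\mathrm{Nm}(L_i)=m$, so $m\ge a_ic_i-\tfrac14 c_i^2\ge\tfrac34 a_ic_i$. Hence $c_i\le\sqrt{4m/3}=\tfrac{2\sqrt3}{3}\sqrt m$ unconditionally, while $c_i\ge 6m^{\frac12-\tau}$ forces $a_i\le\tfrac{4m}{3c_i}\le\tfrac29 m^{\frac12+\tau}$, which is $<5^s$ because $5^s\ge m^{\frac12+\tau}$. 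In particular the short quaternions of norms $c_i\le a_i<5^s$ realizing these forms and conjugating $L_i$ to $L_0$ are genuinely small compared with $Q$, which is what makes the geometry work.

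The core of the argument is then a geometric estimate, following Linnik's analysis of conjugate pairs: the directing forms control the inner product, namely $\langle L_1,L_2\rangle\ge m-c_1-c_2$, equivalently $\mathrm{Nm}(L_1-L_2)\le 2(c_1+c_2)$. It is natural here to introduce the conjugate-pair image $L_0^{\sharp}=\bar Q\,\bar L_0\,\bar Q^{-1}$ of $L_0$ — the unique proper vector of norm $m$ with $l+L_0^{\sharp}=\bar Q\,\bar U_0$, which is integral and proper since $L_0$ is proper and $5\nmid m$ — and to use the short quaternions above to locate $L_1$ and $L_2$ near $L_0^{\sharp}$, the junior lower bound $c_i\ge 6m^{\frac12-\tau}$ being precisely what excludes the ``skew'' (senior) configurations for which such an estimate would fail. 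Granting the estimate, $\mathrm{Nm}(L_1-L_2)\le 2(c_1+c_2)\le\tfrac{8\sqrt3}{3}\sqrt m$, and the hypothesis $m^{\tau}>\tfrac{8\sqrt3}{3}$ yields $\tfrac{8\sqrt3}{3}\sqrt m<m^{\frac12+\tau}\le 5^s$; combined with the first step this gives $\mathrm{Nm}(L_1-L_2)=0$, i.e.\ $L_1=L_2$.

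I expect the main obstacle to be exactly this geometric estimate: formulating and proving $\langle L_1,L_2\rangle\ge m-c_1-c_2$ from the quaternion arithmetic of conjugate pairs, and in particular isolating why the junior hypothesis — rather than mere reduction of the directing forms — is what licenses it. This is the geometric heart of Linnik's method, of which the present lemma is the quantitative form, and the bookkeeping with proper quaternions, the hypothesis $5\nmid m$, and the standard-form normalization all enter at this point.
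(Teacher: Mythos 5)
There is a genuine gap, and you have located it yourself: the entire content of the lemma is the "geometric estimate" that you grant without proof. Your opening reduction is fine and is in the spirit of the paper's mechanism (a quantity divisible by $5^s$ which is also smaller than $5^s$ must vanish): from $L_1-L_2=\bar{Q}(U_1-U_2)$ one gets $5^s\mid \mathrm{Nm}(L_1-L_2)$, so it would suffice to prove $\mathrm{Nm}(L_1-L_2)<5^s$. Your size bookkeeping ($c_i\le\frac{2\sqrt{3}}{3}\sqrt{m}$ from reduction, $a_i\le\frac{2}{9}m^{\frac12+\tau}$ from the junior lower bound) is also correct and matches the paper's. But the claimed estimate $\mathrm{Nm}(L_1-L_2)\le 2(c_1+c_2)$ cannot be right as formulated, because it makes no reference to $Q$: the minima $c_1,c_2$ of the forms directing $(L_0,L_1)$ and $(L_0,L_2)$ do not control the distance from $L_1$ or $L_2$ to $L_0$ (writing $L_i=C_iL_0C_i^{-1}$ with $\mathrm{Nm}(C_i)=c_i$, one finds $\mathrm{Nm}(L_i-L_0)=4\bigl(|V_i|^2m-\langle V_i,L_0\rangle^2\bigr)/c_i$ with $V_i$ the vector part of $C_i$, which can be of order $m$), and there are plenty of distinct pairs $L_1\ne L_2$ whose angles to $L_0$ are both directed by junior forms. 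So no estimate of the shape you propose can follow from reduction theory plus the junior hypothesis alone; the shared divisor $\bar{Q}$ must enter the geometric step itself, not only the final divisibility.

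The paper's proof uses $Q$ exactly there, and in a different way: taking bases $A_i,C_i$ of $\Lambda_{L_i\to L_0}$ realizing the reduced forms, it shows $\mathrm{Tr}(A_iQ)\equiv\mathrm{Tr}(C_iQ)\equiv 0\pmod{5^s}$ (because $\bar{Q}$ left divides $A_iQU_0=(l+L_i)A_i=\bar{Q}U_iA_i$ and $5\nmid\mathrm{Nm}(U_0)$), and then uses the two size bounds above to force $\mathrm{Tr}(A_iQ)^2,\mathrm{Tr}(C_iQ)^2<5^{2s}$, hence both traces vanish. Thus $QA_i,QC_i$ are vectors, and the identity $(QA_i)(QC_i)=-5^sb_i-5^sL_0$ shows they are orthogonal to $L_0$; so for $i=1,2$ the plane $Q\Lambda_{L_i\to L_0}$ is the same plane (the vectors orthogonal to $L_0$ inside the vectors orthogonal to $Q$), whence $A_2$ lies in the span of $A_1,C_1$ and $L_2=A_2L_0A_2^{-1}=L_1$. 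If you want to salvage your route, you would have to reconstruct essentially this argument to produce your bound on $\mathrm{Nm}(L_1-L_2)$; as written, the proposal defers the whole difficulty.
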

\begin{proof} Let $A_i, C_i$ be the corresponding basis of $\Lambda_{L_i\rightarrow L_0}$. The strategy is to show that the plane spanned by $A_1, C_1$ coincides with the plane spanned by $A_2, C_2$. In order to do so, we first show that $\mbox{Tr}(A_iQ) = \mbox{Tr}(C_iQ) = 0$, so the two planes lie in a common hyperplane. To show that these traces are zero, we will prove that they are both congruent to zero modulo $5^s$ and smaller than $5^s$.

We have
\[
A_iQU_0 = A_i(l+L_0) = (l+L_i)A_i = \bar{Q}U_iA_i,
\]
so $\bar{Q}$ left divides $A_iQU_0$. Since $5$ doesn't divide the norm of $U_0$, $\bar{Q}$ must left divide $A_iQ$, so $Q$ right divides both $A_iQ$ and its conjugate. Thus the proper quaternion $Q$ is a right divisor of the integer $\mbox{Tr}(A_iQ)$, so we must have $\mbox{Tr}(A_iQ) \equiv 0 \pmod{5^s}$. Similarly we have $\mbox{Tr}(C_iQ) \equiv 0 \pmod{5^s}$.

Since $c_i \le \sqrt{\frac{4}{3}m}$, we have
\[
\mbox{Tr}(C_iQ)^2 \le 4\mbox{Nm}(C_iQ) = 4\cdot 5^sc_i \le 4\cdot 5^s\sqrt{\frac{4}{3}m} = \frac{8\sqrt{3}}{3}5^{s}m^{\frac{1}{2}} < 5^{2s},
\]
since $m^{\tau} > \frac{8\sqrt{3}}{3}$. Thus $\mbox{Tr}(C_iQ) = 0$. Note that so far we have not used the lower bound on $c_i$.

By the lower bound on $c_i$ and the fact that the determinant of the quadratic form $(a_i,b_i,c_i)$ is $m$ we have
\[
a_i = \frac{m+b_i^2}{c_i} \le \frac{1}{6}m^{\frac{1}{2}+\tau} + \frac{1}{4}c_i \le \frac{11}{48}m^{\frac{1}{2}+\tau}.
\]
Thus, we have
\[
\mbox{Tr}(A_iQ)^2 \le 4\cdot 5^sa_i \le \frac{11}{12}5^{s}m^{\frac{1}{2}+\tau} < 5^{2s},
\]
so $\mbox{Tr}(A_iQ) = 0$.

From the above we see that $QA_i, QC_i$ are vectors. Thus,
\[
(QA_i)(QC_i) = (-\bar{A_i}\bar{Q})(QC_i) = -5^s\bar{A_i}C_i = -5^sb-5^sL_0,
\]
so $QA_i, QC_i$ are orthogonal to $L_0$. Thus $A_2$ is in the same plane as $A_1,C_1$, so $A_2 \in \Lambda_{L_1\rightarrow L_0}$, and we see $L_2 = A_2L_0A_2^{-1} = L_1$.
\end{proof}

\begin{lem}[(Bound on the number of conjugate pairs directed by senior forms)]\label{senior} Let $0 < \tau < \frac{1}{2}$ be a fixed real number. Let $s$ be an integer such that $m^{\frac{1}{2}+\tau} \le 5^s < 5m^{\frac{1}{2}+\tau}$. Let $l$ be an integer such that $5^s\mid\mid l^2+m$. Then the number of pairs $(L_1,L_2)$ such that we have
\begin{align*}
l + L_1 &= QU_1,\\
l + L_2 &= \bar{Q}U_2,
\end{align*}
where each $L_i$ is a proper integral vector of norm $m$, $Q$ is a proper integral quaternion of norm $5^s$ which is in standard form, and the quaternions $U_i$ are integral quaternions, such that there is an element $C\in \Lambda_{L_2\rightarrow L_1}$ with norm $c$ satisfying $c < 6m^{\frac{1}{2}-\tau}$, is at most
\[
t(m)+96S,
\]
where $S$ is the sum
\[
S = \sum_{c=1}^{6m^{\frac{1}{2}-\tau}}\sum_{5^{s_1}\mid c}\sum_{d=1}^{5^{s_1}\sqrt{\frac{4m}{5^sc}}}2^{\omega(5c)}\tau(4m-5^{s-2s_1}d^2c)\left(4m, c\right)^{[\frac{1}{2}]},
\]
if $m^{\tau} > \frac{8\sqrt{3}}{3}$ and $5\nmid m$.
\end{lem}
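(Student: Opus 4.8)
The plan is to adapt Linnik and Maly\v{s}ev's analysis of conjugate pairs directed by senior forms, but to organize the senior case into subcases so that the resulting count takes the shape of the sum $S$. First I would dispose of the diagonal pairs: if $L_1 = L_2$ then $1 \in \Lambda_{L_2\to L_1}$ has norm $1 < 6m^{\frac{1}{2}-\tau}$, so all $t(m)$ pairs $(L,L)$ are counted, and these account for the $t(m)$ term. So from now on assume $L_1 \ne L_2$, fix an element $C \in \Lambda_{L_2\to L_1}$ of small norm $c$, and complete it to a $\mathbb{Z}$-basis $A, C$ of the lattice for which the directing form $(a,b,c) = ax^2 + 2bxy + cy^2$ (of determinant $ac - b^2 = m$) is reduced; in particular $c \le \sqrt{4m/3}$ and $a = (m+b^2)/c$. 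Exactly as in Lemma \ref{junior}, $QC$ is a vector: from $l+L_1 = QU_1$, $l+L_2 = \bar Q U_2$ and $L_2 C = C L_1$ one gets $\bar Q U_2 C = C Q U_1$, so $\bar Q$ left-divides $CQU_1$, and since $5 \nmid \operatorname{Nm}(U_1)$ (because $5^s \mid\mid l^2+m$) one may cancel $U_1$ and conclude $\bar Q$ left-divides $CQ$, say $CQ = \bar Q W$; then $\overline{CQ} = \bar W Q$, so $Q$ right-divides the integer $\operatorname{Tr}(CQ) = CQ + \overline{CQ}$, i.e.\ $5^s = \operatorname{Nm}(Q)$ divides $\operatorname{Tr}(CQ)$. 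Combined with $\operatorname{Tr}(CQ)^2 \le 4\operatorname{Nm}(CQ) = 4\cdot 5^s c \le \frac{8\sqrt{3}}{3}5^s m^{\frac{1}{2}} < 5^{2s}$ (using $5^s \ge m^{\frac{1}{2}+\tau}$ and $m^\tau > \frac{8\sqrt{3}}{3}$), this forces $\operatorname{Tr}(CQ) = 0$.

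The departure from the junior case is in the companion vector. The same argument gives only $5^s \mid \operatorname{Tr}(AQ)$, and the bound $a \le (m+c^2/4)/c$ is, for $c < 6m^{\frac{1}{2}-\tau}$, much too large to force $\operatorname{Tr}(AQ) = 0$; so $QA$ itself need not be a vector, and one cannot simply read off a representation of the directing form as a sum of three squares the way Linnik does for junior forms. Instead I would write $\operatorname{Tr}(AQ) = 5^s d$ --- so that $d^2 \le 4a/5^s \le 4(m+b^2)/(5^sc)$ is bounded --- and split the count according to the value of $d$ and to the power $5^{s_1}$ carrying the $5$-part of $c$. In each subcase one works with the pure quaternion $\tilde L := QA - \tfrac{5^s d}{2}$: expanding $(QA)(QC) = (5^sd - \bar A\bar Q)(QC) = 5^sd\cdot QC - 5^s(b + L_1)$ and taking pure parts shows that $L_1$ lies in the plane $P$ spanned by the pure quaternions $QC$ and $\tilde L \times QC$, and that $P$ --- together with the parameters $c, d$ --- determines $C$ (recovered from $QC$), hence $L_2 = C L_1 C^{-1}$ and $Q$ (the standard-form left $5^s$-divisor of $l+L_1$). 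Reading off coordinates on $P$ exhibits a binary quadratic form --- closely related to the directing form, but ``rotated by $Q$'' --- as a sum of three squares of linear forms; computing its Gram matrix from $\operatorname{Nm}(QC) = 5^s c$, $\operatorname{Nm}(L_1) = m$ and $\langle QC, L_1 \rangle = \tfrac{5^s c d}{2}$ gives determinant $\tfrac{5^s c}{4}\bigl(4m - 5^s c d^2\bigr)$, and after the requisite normalization --- extracting the content and the square powers of $5$, which is where the exponent $s_1$ enters --- one is reduced to the divisor $4m - 5^{s-2s_1}d^2c$, whose nonnegativity pins down the range $1 \le d \le 5^{s_1}\sqrt{4m/(5^sc)}$ (the tail of this range coming from $d^2 \le 4a/5^s$).

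It then remains, for each fixed $(c, s_1, d)$, to count the pairs $(L_1,L_2)$ producing a given such form. By Lemma \ref{note13} the number of representations of that form as a sum of three squares of linear forms is at most $96\cdot 2^{\omega(5c)}\tau\bigl(4m - 5^{s-2s_1}d^2c\bigr)(4m,c)^{[\frac{1}{2}]}$, where Lemma \ref{square-roots} controls the auxiliary square-root congruences arising precisely as in the proof of Lemma \ref{note13}; and each representation is produced by only boundedly many pairs, since $L_1$ --- and hence $Q$, hence $C$, hence $L_2$ --- is recoverable from it. Summing over $c \le 6m^{\frac{1}{2}-\tau}$, over the powers $5^{s_1}\mid c$, and over $d$ in the range forced by nonnegativity yields the bound $96S$; adding the $t(m)$ from the diagonal pairs completes the proof.

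I expect the senior-case analysis to be the main obstacle, and within it two points especially: correctly identifying the rotated binary form and computing its determinant in each subcase, which demands careful bookkeeping with the powers of $5$ and with the content (in the spirit of the correction to Note 13 recorded in Lemma \ref{note13}); and arranging the reconstruction of $(L_1,L_2)$ from a representation so that the overcounting stays bounded. (The subsequent estimation of the sum $S$ itself --- the step that ultimately replaces Linnik's doubly-exponential bound by a singly-exponential one --- is carried out separately via the level-lowering argument of Lemma \ref{sum}.)
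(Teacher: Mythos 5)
Your outline has the right architecture --- the $t(m)+96S$ split, the proof that $\mbox{Tr}(CQ)=0$ via divisibility by $5^s$ plus the size bound, the parametrization by $(c,s_1,d)$ with $s_1$ absorbing the non-properness of $CQ$, the appeal to Lemma \ref{note13}, and the reconstruction of $(L_1,L_2)$ from the resulting representation. But there is a genuine gap in how you allocate the $t(m)$ term. The $t(m)$ does not count the diagonal pairs $L_1=L_2$; in the actual argument it counts the stratum $\mbox{Tr}(U_1\bar{C})=0$, where one computes $l+L_2=C(l+L_1)C^{-1}=\frac{1}{c}(CQ)(U_1\bar{C})=\bar{Q}\bar{U_1}$ (both factors being vectors), so that $L_2$ is \emph{determined} by $L_1$ --- but is in general \emph{not equal} to $L_1$. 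Since the sum $S$ runs only over $d\ge 1$, your plan (remove diagonal pairs, bound everything else by $96S$) leaves the non-diagonal pairs in the $d=0$ stratum completely unaccounted for, and nothing in your sketch shows that $d=0$ forces $L_1=L_2$.

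This is tied to a second problem: your $d$ is defined by $\mbox{Tr}(AQ)=5^sd$, but the parameter $d$ appearing in $S$ is $\mbox{Tr}(U_1\bar{C})/c'$ with $c'=c/5^{s_1}$ --- the real part of $2U_1\bar{C}$ divided by $c'$. With that definition one indeed has $\langle QC,L_1\rangle=\tfrac{5^s}{2}\mbox{Tr}(U_1\bar{C})$ (from $(QC)(l+L_1)=-5^s\bar{C}U_1$ and $\mbox{Tr}(QC)=0$), whereas your identity $\langle QC,L_1\rangle=\tfrac{5^scd}{2}$ with $d=\mbox{Tr}(AQ)/5^s$ is unjustified. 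Moreover the step that makes the count per $(c,s_1,d)$ work --- showing $c'\mid\mbox{Tr}(U_1'\bar{C'})$, which requires the B\'ezout argument with $X\bar{Q'}+YC'=1$ and is what confines $d$ to integers and produces the second linear form $H_2=2U_1'\bar{C'}=dc'+b_1i+b_2j+b_3k$ entering the three-squares representation of $\phi(x,y)=\mbox{Nm}(-x\bar{H_1}+yH_2)/c'$ --- is absent from your sketch, and your proposed Gram determinant $\tfrac{5^sc}{4}(4m-5^scd^2)$ does not reduce to the required $c'^2(4m-5^{s-2s_1}d^2c)$ for $s_1>0$. These are not bookkeeping details: they are the content of the senior-form bound.
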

\begin{proof} By possibly negating $C$, we can assume that $\mbox{Tr}(U_1\bar{C}) \ge 0$, and by dividing $C$ by an integer we can assume that $C$ is proper. From the proof of Lemma \ref{junior}, $CQ$ is a vector. We divide into two cases, based on whether $\mbox{Tr}(U_1\bar{C}) = 0$ or not. If $\mbox{Tr}(U_1\bar{C}) = 0$, then since $CQ$ and $U_1\bar{C}$ are both vectors we have
\[
l+L_2 = C(l+L_1)C^{-1} = \frac{1}{c}CQU_1\bar{C} = \frac{1}{c}(-\bar{Q}\bar{C})(-C\bar{U_1}) = \bar{Q}\bar{U_1},
\]
so $L_2$ is determined by $L_1$, and the number of such pairs is at most $t(rm)$.

Now we count pairs $(L_1, L_2)$ with $\mbox{Tr}(U_1\bar{C}) > 0$. If $CQ$ is not proper, then by an easy application of B\'{e}zout's identity we see that we can write
\[
C = C'T,\ Q = \bar{T}Q',
\]
where $T$ is integral, $C'Q'$ is proper, and $Q'$ is in standard form. Let $\mbox{Nm}(T) = 5^{s_1}$, let $\mbox{Nm}(C') = c'$. We have $0 \le s_1 < s$. We now count the number of such pairs $(L_1,L_2)$ for fixed values of $s_1, c$.

First we show that there are not too many possible values of $\mbox{Tr}(U_1\bar{C})$, by showing that it must be congruent to $0$ modulo $c'$. We just need to show that $C'$ left divides $U_1\bar{C}$, or equivalently that $c'$ divides $\bar{C'}U_1\bar{C}$.

Let $L_1' = TL_1T^{-1}$, and let $U_1' = U_1\bar{T}$. Note that with these definitions we have $U_1\bar{C} = U_1'\bar{C'}$ and $C'L_1'C'^{-1} = L_2$. We have $L_1'^2 = -m$ and
\[
l + L_1' = Q'U_1\bar{T} = Q'U_1',
\]
so $L_1'$ is an integral vector, and it is proper since $5\nmid \mbox{Nm}(L_1')$, $L_1$ is proper, and $L_1 = \frac{\bar{T}L_1'T}{5^{s_1}}$. Since $CQ$ is a vector, so is $C'Q'$. From this we see
\[
\bar{Q'}\bar{C'}U_1'\bar{C'} = -C'Q'U_1'\bar{C'} = -C'(l+L_1')\bar{C'} = -c'(l+L_2).
\]
Since $C'Q'$ is proper, $\bar{Q'}$ and $C'$ have no common right divisors, so there exist $X,Y$ such that $X\bar{Q'} + YC' = 1$. Thus
\[
\bar{C'}U_1'\bar{C'} = (X\bar{Q'}+YC')\bar{C'}U_1'\bar{C'} = c'(-X(l+L_2)+YU_1'\bar{C'}),
\]
so $U_1'\bar{C'} = C'(-X(l+L_2)+YU_1'\bar{C'})$. Thus $\bar{C'}$ is a right divisor of the integer $\mbox{Tr}(U_1'\bar{C'})$, so
\[
\mbox{Tr}(U_1'\bar{C'}) \equiv 0 \pmod{c'}.
\]

Define $H_1, H_2, a_i, b_i, d$ by
\begin{align*}
H_1 &= 5^{-s_1}CQ = C'Q' = a_1i + a_2j + a_3k,\\
H_2 &= 2U_1\bar{C} = 2U_1'\bar{C'} = dc' + b_1i + b_2j + b_3k.
\end{align*}
We next show that for fixed values of $s_1, c, d$ there are not too many possible values for $a_i, b_i$.

Consider the binary quadratic form
\[
\mbox{Nm}(-x\bar{H_1} + yH_2) = \mbox{Nm}(H_1)x^2 - \mbox{Tr}(H_1H_2)xy + \mbox{Nm}(H_2)y^2.
\]
We have $\mbox{Nm}(H_1) = 5^{s-s_1}c'$, $\mbox{Nm}(H_2) = 4c'u'$ where $u' = \mbox{Nm}(U_1')$. Also, $\mbox{Tr}(H_1H_2) = 2c'\mbox{Tr}(Q'U_1') = 4c'l$. Thus, we have
\begin{align*}
\mbox{Nm}(-x\bar{H_1} + yH_2) &= 5^{s-s_1}c'x^2 - 4c'lxy + 4c'u'y^2\\
&= (dc')^2y^2 + (a_1x + b_1y)^2 + (a_2x + b_2y)^2 + (a_3x + b_3y)^2,
\end{align*}
so
\[
5^{s-s_1}c'x^2 - 4c'lxy + (4c'u'-d^2c'^2)y^2 = (a_1x + b_1y)^2 + (a_2x + b_2y)^2 + (a_3x + b_3y)^2.
\]

Let $\phi(x,y)$ be defined by
\[
\phi(x,y) = 5^{s-s_1}c'x^2 - 4c'lxy + (4c'u'-d^2c'^2)y^2.
\]
Since $5\nmid l$, we have $c' = (5^{s-s_1}c', -2c'l, 4c'u'-d^2c'^2)$. Note that $(a_1,a_2,a_3) = 1$ since $C'Q'$ is proper, and
\[
2c'U_1'Q' = H_2H_1 = dc'H_1 + \mbox{Re}(H_1H_2) + (a_2b_3-a_3b_2)i + (a_3b_1-a_1b_3)j + (a_1b_2-a_2b_1)k,
\]
so $c'\mid (a_2b_3-a_3b_2, a_3b_1-a_1b_3, a_1b_2-a_2b_1)$. Since $5^{s-s_1}u' = \mbox{Nm}(Q'U_1') = l^2 + m$, the determinant of $\phi$ is
\[
c'^2(4\cdot 5^{s-s_1}u'-4l^2-5^{s-s_1}d^2c') = c'^2(4m-5^{s-s_1}d^2c').
\]

Now we are able to apply Lemma \ref{note13} to see that for fixed values of $s_1, c, d$ there are at most
\begin{align*}
&96\cdot 2^{\omega(5^{s-s_1}c')}\tau(4m-5^{s-s_1}d^2c')\left(4m-5^{s-s_1}d^2c', 5^{s-s_1}c'\right)^{[\frac{1}{2}]}\\
&= 96\cdot 2^{\omega(5c)}\tau(4m-5^{s-s_1}d^2c')\left(4m, c\right)^{[\frac{1}{2}]}
\end{align*}
tuples $(a_1, a_2, a_3, b_1, b_2, b_3)$ satisfying the above conditions.

Each such tuple determines $H_1, H_2$, and from $H_1 = C'Q'$ we can uniquely determine $Q'$ since $H_1$ is proper and $Q'$ is in standard form. From this we can determine $C'$, then $U_1'$ by $H_2 = 2U_1'\bar{C'}$, then $L_1'$ by $l+L_1' = Q'U_1'$, then $L_2$ by $C'L_1'C'^{-1} = L_2$. Since $U_1' = U_1\bar{T}$, $5\nmid \mbox{Nm}(U_1)$, and $\bar{T}$ is in standard form, we can uniquely determine $\bar{T}$ and this determines $L_1$ by $TL_1T^{-1} = L_1'$. Thus, each such tuple corresponds to at most one pair $(L_1,L_2)$.

Finally, since $\phi$ is nonnegative, we have $\det \phi \ge 0$, so
\[
0 < d \le \sqrt{\frac{4rm}{5^{s-s_1}c'}} = 5^{s_1}\sqrt{\frac{4rm}{5^sc}}.\qedhere
\]
\end{proof}

In order to obtain a good bound on the sum $S$ occurring in Lemma \ref{senior}, we will make use of the following level lowering trick (this type of trick for handling sums involving divisor functions is originally due to van der Corput \cite{vandercorput}, Wolke \cite{wolke}, and Landreau \cite{landreau}).

\begin{lem}\label{level} For any integer $n$, we have
\[
\tau(n) \le \left(\frac{16}{3}\right)^{13} \sum_{d\mid n,\ d\le n^{\frac{3}{16}}} \tau(d)^2.
\]
\end{lem}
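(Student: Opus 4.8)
The plan is to use the van der Corput--Wolke--Landreau device recalled just above: only boundedly many small divisors of $n$ are needed to capture a fixed proportion of $\tau(n)$. Throughout, set $\delta=3/16$.

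\emph{Step 1: handle the large prime-power components.} Since $\tau$ is submultiplicative, $\tau(ab)\le\tau(a)\tau(b)$, split $n=n'n''$, where $n'$ is the product of the prime-power components $p^{v_p(n)}$ exceeding $n^{\delta}$. There are at most $\lfloor 1/\delta\rfloor=5$ of them; and for such a component $q=p^a\le n$ the truncations $p^b$ with $b\le\delta a$ are all divisors of $n$ of size $\le n^{\delta}$, and an elementary inequality gives $\frac{a+1}{\sum_{b\le\delta a}(b+1)^2}\le 6$ (the worst case being $a=5$). So $\tau(n')$ is, up to the factor $6^5$, at most a sum of $\tau(d)^2$ over admissible divisors $d$, and we are reduced to bounding $\tau(n'')$, where $n''$ has all prime factors $\le n^{\delta}$.

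\emph{Step 2: chop the smooth part into boundedly many admissible divisors.} Greedily pack the prime-power components of $n''$ (splitting a component into sub-powers when it itself exceeds $n^{\delta}$, again at a loss $\le 6$) into groups of product $\le n^{\delta}$; a bin-packing estimate bounds the number $L$ of pieces by an absolute constant. This produces pairwise coprime divisors $M_1,\dots,M_L\mid n$, each $\le n^{\delta}$, with $\tau(n'')\le\prod_{\ell=1}^{L}\tau(M_\ell)$.

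\emph{Step 3: recover the sum (the main obstacle).} It remains to bound $\prod_\ell\tau(M_\ell)$ by $(16/3)^{13}\sum_{d\mid n,\,d\le n^{\delta}}\tau(d)^2$. Treating the pieces as atoms does not suffice --- three or more pieces with large $\tau$ would defeat any fixed multiple of $\sum_\ell\tau(M_\ell)^2$ --- so one must use the internal divisor lattices of the pieces. Because each component $q_j$ of $n''$ has $\log q_j\le\delta\log n$, the admissible multi-indices $\{(b_j)\colon\sum_j b_j\log q_j\le\delta\log n\}$ fill out a whole simplex, so products of truncations of the $M_\ell$ already furnish a sum of $\tau(d)^2$ that recovers $\prod_\ell\tau(M_\ell)$ up to a constant. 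Carrying this accounting through so that the resulting constant is at most $(16/3)^{13}=\delta^{-13}$ --- with the exponent $13$ governed by the number of pieces and the base $\delta^{-1}$ by the per-piece and cross-piece truncation losses --- is where I expect the real difficulty to lie; Steps 1 and 2 are otherwise routine.
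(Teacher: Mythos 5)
There is a genuine gap, and you have located it yourself: Step 3 is not a proof sketch but a statement of the problem. After Steps 1--2 you are left with pairwise coprime divisors $M_1,\dots,M_L$ of $n$, each at most $n^{3/16}$, with $\tau(n)\le C\prod_{\ell}\tau(M_\ell)$, and you must convert the \emph{product} $\prod_\ell\tau(M_\ell)$ into a bounded multiple of the \emph{sum} $\sum_{d\mid n,\ d\le n^{3/16}}\tau(d)^2$. As you note, if several pieces each have $\tau(M_\ell)=T$ the product is $T^L$ while $\sum_\ell\tau(M_\ell)^2=LT^2$, so the pieces cannot be treated as atoms; but the proposed remedy (``products of truncations of the $M_\ell$ fill out a simplex and recover the product up to a constant'') is precisely the assertion that needs proving, and it is not obvious -- the divisors $d\le n^{3/16}$ built from truncations are constrained by a single coupled inequality $\sum_j b_j\log q_j\le\frac{3}{16}\log n$, and one must show that the sum of $\prod_j(b_j+1)^2$ over the lattice points of that simplex dominates $\prod_j(a_j+1)$ with an absolute constant, uniformly in the number and sizes of the components. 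Nothing in Steps 1--2 makes progress on this; they are routine reductions, and the entire content of the lemma sits in the step you defer. Your reading of where the constant $(16/3)^{13}$ comes from (number of pieces, per-piece truncation losses) also does not correspond to anything that your decomposition would produce.

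For contrast, the paper's proof (following Munshi) avoids any decomposition of $n$ into coprime blocks. Writing $n=\prod_{i=1}^{\Omega(n)}p_i$, one interprets $16^{\Omega(n)}$ as the number of partitions of $\{1,\dots,\Omega(n)\}$ into $16$ labelled subsets and obtains the identity $16^{\Omega(n)}=\sum_{d\mid n}N(n,d)\,3^{\Omega(d)}13^{\Omega(n/d)}$, where $N(n,d)$ counts subsets of the prime factors multiplying to $d$. The Manickam--Mikl\'os--Singhi theorem (proved for $k\le 7$; here $n=16$, $k=3$) guarantees that for \emph{every} partition at least $g(16,3)=\binom{15}{2}=\frac{3}{16}\binom{16}{3}$ of the $3$-element unions of blocks have product at most $n^{3/16}$; this restricts the sum to $d\le n^{3/16}$ at the cost of the factor $\frac{16}{3}$. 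H\"older's inequality with exponents $(13,\frac{13}{12})$ then peels off $\bigl(\sum_{d\le n^{3/16}}\tau(d)^2\bigr)^{1/13}$, and the remaining factor is shown to dominate $\tau(n)$ by a purely multiplicative verification at prime powers (a finite check for $e\le 27$ plus a Jensen estimate for $e\ge 28$). That is where the exponent $13$ and the base $\frac{16}{3}$ actually come from, and it is the MMS input that lets the exponent on $\tau(d)$ stay at $2$ while the threshold is as small as $n^{3/16}$. To salvage your approach you would need an independent proof of the simplex inequality in your Step 3 with an explicit constant, which is a comparable amount of work to the paper's argument.
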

\begin{proof} We follow Munshi's argument from \cite{munshi}, making some modifications to handle the case where $n$ is not squarefree. Write $n = \prod_{i=1}^{\Omega(n)}p_i$, with each $p_i$ prime. For any $d$ let $\langle\substack{n\\d}\rangle$ be the number of subsets $S\subseteq \{1,...,\Omega(n)\}$ with $\prod_{i\in S}p_i = d$. Thus $\langle\substack{n\\d}\rangle$ is a multiplicative function of $n$ and $d$, and for $p$ a prime we have $\langle\substack{p^e\\p^f}\rangle = \binom{e}{f}$.

Interpreting $16^{\Omega(n)}$ as the number of partitions of the set $\{1, ..., \Omega(n)\}$ into $16$ subsets $S_1, ..., S_{16}$, and writing $d = \prod_{i\le 3}\prod_{j\in S_i}p_j$, we have the equality
\[
16^{\Omega(n)} = \sum_{d\mid n}\langle\substack{n\\d}\rangle 3^{\Omega(d)}13^{\Omega(n/d)}.
\]
Now we apply a special case of the {M}anickam--{M}ikl\'os--{S}inghi conjecture. For any integers $n \ge k$, define $g(n,k)$ by
\[
g(n,k) = \min_{a_1 + \cdots + a_n = 0} \big|\big\{S\subseteq \{1,...,n\}\big| |S| = k, \sum_{i\in S}a_i \ge 0\big\}\big|.
\]
The {M}anickam--{M}ikl\'os--{S}inghi conjecture states that for $n \ge 4k$, $g(n,k) = \binom{n-1}{k-1}$. In \cite{mms}, this is verified for $k \le 7$. We will apply this with $n = 16, k = 3$.

For any partition of $\{1, ..., \Omega(n)\}$ into $16$ subsets $S_1, ..., S_{16}$, set $a_i = \frac{1}{16}\log(n) - \sum_{j\in S_i}\log(p_j)$. Then since $\sum_{i=1}^{16}a_i = 0$, the number of subsets $I\subset \{1,...,16\}$ of size $3$ such that $\sum_{i\in I}a_i \ge 0$, or in other words such that $\prod_{i\in I}\prod_{j\in S_i}p_j \le n^{\frac{3}{16}}$, is at least $g(16,3) = \binom{15}{2}$. Since $\binom{15}{2} = \frac{3}{16}\binom{16}{3}$, this show that
\[
16^{\Omega(n)} \le \frac{16}{3}\sum_{d\mid n,\ d\le n^{\frac{3}{16}}}\langle\substack{n\\d}\rangle 3^{\Omega(d)}13^{\Omega(n/d)}.
\]
By H\"{o}lder's inequality, we see that
\[
\sum_{d\mid n,\ d\le n^{\frac{3}{16}}}\langle\substack{n\\d}\rangle 3^{\Omega(d)}13^{\Omega(n/d)} \le \Bigg(\sum_{d\mid n,\ d\le n^{\frac{3}{16}}} \tau(d)^2\Bigg)^{\frac{1}{13}}\left(\sum_{d\mid n}\left(\langle\substack{n\\d}\rangle 3^{\Omega(d)}13^{\Omega(n/d)}\right)^{\frac{13}{12}}\frac{1}{\tau(d)^{\frac{2}{12}}}\right)^{\frac{12}{13}}.
\]
Putting the last two inequalities together, we have
\[
\left(\sum_{d\mid n}\left(\langle\substack{n\\d}\rangle \frac{3^{\Omega(d)}13^{\Omega(n/d)}}{16^{\Omega(n)}}\right)^{\frac{13}{12}}\frac{1}{\tau(d)^{\frac{2}{12}}}\right)^{-12} \le \left(\frac{16}{3}\right)^{13} \sum_{d\mid n,\ d\le n^{\frac{3}{16}}} \tau(d)^2.
\]
We would like to show that the left hand side is at least as large as $\tau(n)$. Since the left hand side is a multiplicative function of $n$, it's enough to check this when $n$ is a prime power, say $n = p^e$. In other words, we need to check that for $e \ge 0$ we have
\[
e+1 \le \left(\sum_{f\le e}\left(\binom{e}{f} \frac{3^{f}13^{e-f}}{16^{e}}\right)^{\frac{13}{12}}\frac{1}{(f+1)^{\frac{2}{12}}}\right)^{-12}.
\]
For $e \le 27$, we can check this by a straightforward computation. For $e\ge 28$, this follows from
\begin{align*}
&\left(\sum_{f\le e}\left(\binom{e}{f} \frac{3^{f}13^{e-f}}{16^{e}}\right)^{\frac{13}{12}}\frac{1}{(f+1)^{\frac{2}{12}}}\right)^{-12}\\
\ge &\left(\sum_{f\le e}\binom{e}{f} \frac{3^{f}13^{e-f}}{16^{e}}\frac{1}{(f+1)^{\frac{2}{12}}}\right)^{-12}\\
= &\left(\frac{16}{3(e+1)}\sum_{f+1\le e+1}\binom{e+1}{f+1} \frac{3^{f+1}13^{e-f}}{16^{e+1}}(f+1)^{\frac{10}{12}}\right)^{-12}\\
\ge &\left(\frac{16}{3(e+1)}\left(\frac{3}{16}(e+1)\right)^{\frac{10}{12}}\right)^{-12} = \left(\frac{3(e+1)}{16}\right)^2,
\end{align*}
where the first inequality follows from the fact that $\binom{e}{f} \frac{3^{f}13^{e-f}}{16^{e}} \le 1$, and the second inequality follows from Jensen's inequality applied to the concave function $x \mapsto x^{\frac{10}{12}}$.
\end{proof}

In the proof of the next lemma, we will also make free use of the easy inequalities
\begin{align*}
\sum_{x\le n} \frac{\tau(x)^k}{x} &\le \left(\sum_{x\le n} \frac{1}{x}\right)^{2^k} \le \log(en)^{2^k},\\
\sum_{x\le n} \tau(x)^k &\le \sum_{d_1, ..., d_{2^k-1} \le n} \left\lfloor\frac{n}{d_1 \cdots d_{2^k-1}}\right\rfloor \le n\log(en)^{2^k-1}.
\end{align*}

\begin{lem}\label{sum} For $0 < \tau < \frac{1}{2}$, $m^{\frac{1}{2}+\tau} \le 5^s < 5m^{\frac{1}{2}+\tau}$, $m^{\tau} > \frac{8\sqrt{3}}{3}$, $5\nmid m$, the sum
\[
S = \sum_{c=1}^{6m^{\frac{1}{2}-\tau}}\sum_{5^{s_1}\mid c}\sum_{d=1}^{5^{s_1}\sqrt{\frac{4m}{5^sc}}}2^{\omega(5c)}\tau(4m-5^{s-2s_1}d^2c)\left(4m, c\right)^{[\frac{1}{2}]}
\]
is at most
\[
\left(e^{12}F_1(4m)\log(e^{7}m)+e^{31}F_2(4m)\right)\log(e^{12}m)^8m^{\frac{1}{2}-\tau}+e^{13667}\log(e^{12}m)^6m^{\frac{1}{2}-\frac{1}{160}},
\]
where $F_1, F_2$ are multiplicative functions given by
\begin{align*}
F_1(n) &= \sum_{gh^2\mid n} \frac{\tau(g)\tau(h^2)^22^{\omega(g)+\omega(h)}g^{[\frac{1}{2}]}}{hg},\\
F_2(n) &= \sum_{gh^2\mid n} \frac{\tau(g)\tau(h^2)^22^{\omega(g)}g^{[\frac{1}{2}]}}{hg}.
\end{align*}
$F_1,F_2$ satisfy the inequalities
\[
F_1(4m) \le e^{62}m^{10^{-6}},\ F_2(4m) \le e^{46}m^{10^{-6}}.
\]
\end{lem}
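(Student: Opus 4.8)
The plan is to eliminate the divisor function $\tau(4m-5^{s-2s_1}d^2c)$ in $S$ using the level lowering inequality of Lemma \ref{level}, which replaces each such term by $\left(\frac{16}{3}\right)^{13}\sum_{e}\tau(e)^2$, the sum taken over divisors $e\le(4m)^{3/16}$ of $4m-5^{s-2s_1}d^2c$. Writing $c=5^{s_1}c_1$ one sees that $4m-5^{s-2s_1}d^2c=4m-5^{s-s_1}d^2c_1$ is a positive integer not divisible by $5$ (since $s-s_1\ge1$ while $5\nmid 4m$), so every such $e$ is coprime to $5$. I would then interchange the order of summation, putting $e$, $c$ and $s_1$ outermost and $d$ innermost. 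For fixed $e,c,s_1$ the inner sum counts integers $d$ in the interval $[1,D]$, where $D=5^{s_1}\sqrt{4m/(5^sc)}$, solving the quadratic congruence $5^{s-2s_1}c\,d^2\equiv 4m\pmod e$. The count is at most $\left(\lfloor D/e\rfloor+1\right)N_e$ where $N_e$ is the number of residues mod $e$ satisfying it, and $N_e$ is bounded by Lemma \ref{square-roots}, after dividing out $\gcd(c,e)$, by a quantity of the shape $2^{1+\omega(e)}(4m,e)^{[\frac{1}{2}]}$ times a contribution from the common factors of $c$ and $e$.

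The ``$D/e$'' part of this count gives the main term. Here I would sum first over $c$: writing $c=5^{v}c_0$ with $5\nmid c_0$, the weight $2^{\omega(5c)}(4m,c)^{[\frac{1}{2}]}$ depends only on $c_0$ and $\sum_{s_1\le v}5^{s_1}\le\frac{5}{4}\cdot5^{v}$, so the factor $5^{s_1}$ inside $D$ is exactly cancelled and the $c$ and $s_1$ sums collapse, via an estimate for $\sum_{c_0}\tau(c_0)(4m,c_0)^{[\frac{1}{2}]}/c_0$, to $m^{1/2-\tau}$ times a multiplicative constant depending on $4m$ and a power of $\log m$. The residual $\sum_{e}\tau(e)^2N_e/e\le\sum_{e}\tau(e)^3(4m,e)^{[\frac{1}{2}]}/e$ then contributes, by the elementary inequality $\sum_{x\le n}\tau(x)^k/x\le\log(en)^{2^k}$ with $k=3$, the factor $\log(e^{12}m)^8$ and a second multiplicative constant depending on $4m$. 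Combining the two multiplicative constants and organising them according to the square parts of the gcd's (whether the relevant prime divides $c$ or $e$ to the first power or to a higher power) reproduces precisely $F_1(4m)$ and $F_2(4m)$; the extra $2^{\omega(h)}$ and the extra $\log(e^7m)$ attached to the $F_1$ term reflect whether one sums $N_e$ over all admissible residue classes or counts those classes directly.

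The ``$+1$'' part of the count, and the divisors $e$ exceeding $D$, belong to the complementary regime $D<(4m)^{3/16}$, in which $d$ is confined to a short interval. There I would argue crudely, controlling the number of near-solutions of $4m\approx 5^{s-2s_1}c\,d^2$ and using $\sum_{x\le n}\tau(x)^k\le n\log(en)^{2^k-1}$ together with the inequality above; the factor $m^{3/16}$ lost from the range of $e$ is more than compensated by the restricted ranges of $c$ and $d$, leaving a contribution of the form $e^{13667}\log(e^{12}m)^6 m^{1/2-1/160}$ (the enormous constant merely recording how wasteful these estimates are). I expect this complementary regime to be the main obstacle: one must arrange the crude bounds so that a genuine power saving in $m$ survives rather than the argument simply recovering $m^{1/2-\tau}$.

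For the last assertion, that $F_1(4m)\le e^{62}m^{10^{-6}}$ and $F_2(4m)\le e^{46}m^{10^{-6}}$, I would use that $F_1$ and $F_2$ are multiplicative and compute their local factors: at an odd prime $p$ each equals $1+O(1/p)$ with a small and explicit implied constant, and at $p=2$ each is bounded by a fixed constant (uniformly in $v_2(4m)$, by direct summation of the geometric-type series that defines the local factor). By Rankin's trick $F_i(4m)\le(4m)^{10^{-6}}\prod_{p}F_i(p^{v_p(4m)})\,p^{-v_p(4m)\cdot10^{-6}}$, and splitting the primes at a cutoff near $\exp(10^{6})$ — the factors for $p$ beyond the cutoff being at most $1$, and those below being controlled by $\sum_{p\le z}1/p\le\log\log z+O(1)$ in the exponent — yields the stated constants.
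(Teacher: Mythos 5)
Your overall skeleton matches the paper's: apply Lemma \ref{level} to replace $\tau(4m-5^{s-2s_1}d^2c)$ by a sum of $\tau(f)^2$ over small divisors $f$, interchange summation, count the $d$ (or $k$) satisfying the resulting quadratic congruence via Lemma \ref{square-roots}, and bound $F_1,F_2$ by local maximization over prime powers (the paper optimizes $p^{-k\cdot 10^{-6}}$ times the local factor over exponents $k$ for each $p$ up to a cutoff, essentially your Rankin argument). The main term of your count, $N_e D/e$ summed over $c$ and $e$, does reproduce the $m^{\frac12-\tau}$ terms as you describe.

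The genuine gap is in your treatment of the boundary term. You write the count of admissible $d\le D$ as $(\lfloor D/e\rfloor+1)N_e$ and claim the ``$+1$'' belongs to the regime $D<(4m)^{3/16}$; it does not --- the $+1$ per residue class is incurred for \emph{every} pair $(c,e)$. Summing $\tau(e)^2N_e$ over $e\le(4m)^{3/16}$ gives roughly $(4m)^{3/16}$ times logs and gcd factors, and then summing over $c\le 6m^{\frac12-\tau}$ with the weight $2^{\omega(5c)}(4m,c)^{[\frac12]}$ produces a contribution of order $m^{\frac12-\tau+\frac{3}{16}}$, which exceeds $m^{1/2}$ for the tiny $\tau$ used in the application and cannot be absorbed into either term of the stated bound. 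You flag this regime as ``the main obstacle'' but do not resolve it, and ``arguing crudely'' there will not produce the required power saving. The paper's resolution is structural: after extracting $g=(4m,c)$ and writing the inner variable as $k$, it splits on $d\ge f$ versus $d<f$. When $d\ge f$ the number of $d$ in a residue class modulo $f$ within $[f,D]$ is at most $D/f$ with \emph{no} $+1$. When $d<f$ the roles are reversed: $d$ is fixed (there are at most $f\cdot(f,4m/g)^{[\frac12]}/(f,4m/g)$ choices), the congruence pins down $k$ modulo $f'=f/(f,4m/g)$, and one counts $k$ in that progression over its long range $K(d)\approx m^{\frac12-\tau}/g$ by a \emph{second} application of Lemma \ref{level} to the weight $2^{\omega(k)}$; the resulting error $K(d)^{3/16}$, summed over $d<f$ and over $f\le(4m/g)^{3/16}$ with weight $\tau(f)^2$, is what yields the $m^{\frac12-\frac{1}{32}}$ (and after the $g$-sum, $m^{\frac12-\frac{1}{160}}$) term. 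Without this dichotomy, or an equivalent device, your argument does not close.
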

\begin{proof} Write $c = 5^{s_1}gk$, with $g = (4m,c)$. Then we see that $S$ is at most
\[
2\sum_{\substack{g\mid 4m\\s_1<s}}2^{\omega(g)}\tau(g)g^{[\frac{1}{2}]}\sum_{\substack{(k,\frac{4m}{g})=1\\5^{s_1}gk\le 6m^{\frac{1}{2}-\tau}}}\sum_{d^2\le\frac{4m}{5^{s-s_1}gk}}2^{\omega(k)}\tau\left(\frac{4m}{g}-5^{s-s_1}d^2k\right).
\]
Fix values of $g$ and $s_1$. Let $\mathcal{A}$ be the set of ordered pairs of positive integers $(k,d)$ such that $(k,\frac{4m}{g}) = 1$, $5^{s_1}gk\le 6m^{\frac{1}{2}-\tau}$, and $d^2\le\frac{4m}{5^{s-s_1}gk}$. By Lemma \ref{level}, we have
\[
\sum_{(k,d)\in \mathcal{A}} 2^{\omega(k)}\tau\left(\frac{4m}{g}-5^{s-s_1}d^2k\right) \le \left(\frac{16}{3}\right)^{13}\sum_{f\le \left(\frac{4m}{g}\right)^{\frac{3}{16}}}\tau(f)^2\sum_{\substack{(k,d)\in \mathcal{A}\\f\mid \frac{4m}{g}-5^{s-s_1}d^2k}} 2^{\omega(k)}.
\]
We divide the inner sum into two sums $S_1, S_2$ based on whether $d \ge f$ or $d<f$:
\begin{align*}
S_1 &= \sum_{\substack{(k,d)\in \mathcal{A},\ d\ge f\\f\mid \frac{4m}{g}-5^{s-s_1}d^2k}} 2^{\omega(k)},\\
S_2 &= \sum_{\substack{(k,d)\in \mathcal{A},\ d < f\\f\mid \frac{4m}{g}-5^{s-s_1}d^2k}} 2^{\omega(k)}.
\end{align*}
For fixed values of $f$ and $k$, the number of integers $d\ge f$ such that $(k,d)\in \mathcal{A}$ and $f\mid \frac{4m}{g}-5^{s-s_1}d^2k$ is at most
\[
2^{1+\omega(f)}\left(f,\frac{4m}{g}\right)^{[\frac{1}{2}]}\cdot\frac{1}{f}\sqrt{\frac{4m}{5^{s-s_1}gk}}
\]
by Lemma \ref{square-roots} and the fact that $5^{s-s_1}k$ is relatively prime to $\frac{4m}{g}$. Since $\frac{1}{x}\sum_{k\le x}2^{\omega(k)} \le \log(ex)$ and $\frac{1}{\sqrt{k}}$ is decreasing in $k$, with $\sum_{k\le x}\frac{1}{\sqrt{k}} < 2\sqrt{x}$, we have
\begin{align*}
S_1 &\le 2^{1+\omega(f)}\frac{\left(f,\frac{4m}{g}\right)^{[\frac{1}{2}]}}{f}\sum_{k \le \frac{6(m)^{\frac{1}{2}-\tau}}{5^{s_1}g}} \sqrt{\frac{4m}{5^{s-s_1}gk}}2^{\omega(k)}\\
&<\ 2^{1+\omega(f)}\frac{\left(f,\frac{4m}{g}\right)^{[\frac{1}{2}]}}{f}\sqrt{\frac{4m}{5^{s-s_1}g}}\cdot 2\sqrt{\frac{6m^{\frac{1}{2}-\tau}}{5^{s_1}g}}\log\left(e\frac{6m^{\frac{1}{2}-\tau}}{5^{s_1}g}\right)\\
&<\ 4\sqrt{6}\cdot 2^{\omega(f)}\frac{\left(f,\frac{4m}{g}\right)^{[\frac{1}{2}]}}{f}\frac{m^{\frac{1}{2}-\tau}}{g}\log(e^3m).
\end{align*}

Now we consider the sum $S_2$. Note that since $\left(\frac{4m}{g},5^{s-s_1}k\right) = 1$, in order to have $f\mid \frac{4m}{g}-5^{s-s_1}d^2k$ we must have $\left(f,\frac{4m}{g}\right)\mid d^2$, and the number of such $d < f$ is at most $\frac{\left(f,\frac{4m}{g}\right)^{[\frac{1}{2}]}}{\left(f,\frac{4m}{g}\right)}f$. Fix such a $d$, and define $K(d)$ by the formula
\[
K(d) = \min\left(\frac{6m^{\frac{1}{2}-\tau}}{5^{s_1}g}, \frac{4m}{5^{s-s_1}gd^2}\right).
\]
Let $f' = \frac{f}{\left(f,\frac{4m}{g}\right)}$, and let $k_0$ be the congruence class modulo $f'$ satisfying $f\mid \frac{4m}{g}-5^{s-s_1}d^2k_0$, assuming one exists. By Lemma \ref{level}, we have
\begin{align*}
\sum_{\substack{k\le K(d)\\k\equiv k_0\pmod{f'}}} 2^{\omega(k)}\le &\left(\frac{16}{3}\right)^{13}\sum_{\substack{k\le K(d)\\k\equiv k_0\pmod{f'}}} \sum_{\substack{x\mid k\\x\le K(d)^{\frac{3}{16}}}}\tau(x)^2\\
\le &\left(\frac{16}{3}\right)^{13}\sum_{x\le K(d)^{\frac{3}{16}}}\tau(x)^2\left(\frac{K(d)}{xf'}+1\right)\\
\le &\left(\frac{16}{3}\right)^{13}\left(\frac{K(d)}{f'}\log\left(eK(d)^{\frac{3}{16}}\right)^4+K(d)^{\frac{3}{16}}\log\left(eK(d)^{\frac{3}{16}}\right)^3\right)\\
\le &\frac{2^{32}}{3^{9}}\log(e^{12}m)^4K(d)\frac{\left(f,\frac{4m}{g}\right)}{f}+\frac{2^{37}}{3^{10}}\log(e^{12}m)^3\left(\frac{6m^{\frac{1}{2}-\tau}}{5^{s_1}g}\right)^{\frac{3}{16}}.
\end{align*}
Since $K(d)$ is decreasing in $d$, we have
\[
\sum_{\left(f,\frac{4m}{g}\right)\mid d^2} K(d) \le \frac{\left(f,\frac{4m}{g}\right)^{[\frac{1}{2}]}}{\left(f,\frac{4m}{g}\right)}\int_{d\ge 0} K(d) \le 4\sqrt{6}\cdot\frac{\left(f,\frac{4m}{g}\right)^{[\frac{1}{2}]}}{\left(f,\frac{4m}{g}\right)}\frac{m^{\frac{1}{2}-\tau}}{g},
\]
so
\begin{align*}
S_2 &\le \sum_{\left(f,\frac{4m}{g}\right)\mid d^2,\ d < f} \left(\frac{2^{32}}{3^{9}}\log(e^{12}m)^4K(d)\frac{\left(f,\frac{4m}{g}\right)}{f}+\frac{2^{37}}{3^{10}}\log(e^{12}m)^3\left(\frac{6m^{\frac{1}{2}-\tau}}{5^{s_1}g}\right)^{\frac{3}{16}}\right)\\
&\le \frac{2^{34}\sqrt{6}}{3^{9}}\log(e^{12}m)^4\frac{\left(f,\frac{4m}{g}\right)^{[\frac{1}{2}]}}{f}\frac{m^{\frac{1}{2}-\tau}}{g} + \frac{2^{37}}{3^{10}}\log(e^{12}m)^3f\left(\frac{6m^{\frac{1}{2}-\tau}}{5^{s_1}g}\right)^{\frac{3}{16}}.
\end{align*}

Putting the bounds for $S_1, S_2$ together, we have
\begin{align*}
\left(\frac{16}{3}\right)^{13}\sum_{f\le \left(\frac{4m}{g}\right)^{\frac{2}{11}}}\tau(f)^2\sum_{\substack{(k,d)\in \mathcal{A}\\f\mid \frac{4m}{g}-5^{s-s_1}d^2k}} 2^{\omega(k)} \le A_1 + A_2 + A_3,
\end{align*}
where
\begin{align*}
A_1 &= \frac{2^{54}\sqrt{6}}{3^{13}}\log(e^3m)\sum_{f\le \left(\frac{4m}{g}\right)^{\frac{3}{16}}}\tau(f)^22^{\omega(f)}\frac{\left(f,\frac{4m}{g}\right)^{[\frac{1}{2}]}}{f}\frac{m^{\frac{1}{2}-\tau}}{g},\\
A_2 &= \frac{2^{86}\sqrt{6}}{3^{22}}\log(e^{12}m)^4\sum_{f\le \left(\frac{4m}{g}\right)^{\frac{3}{16}}}\tau(f)^2\frac{\left(f,\frac{4m}{g}\right)^{[\frac{1}{2}]}}{f}\frac{(m)^{\frac{1}{2}-\tau}}{g},\\
A_3 &= \frac{2^{89}}{3^{23}}\log(e^{12}m)^3\sum_{f\le \left(\frac{4m}{g}\right)^{\frac{3}{16}}}\tau(f)^2f\left(\frac{6m^{\frac{1}{2}-\tau}}{5^{s_1}g}\right)^{\frac{3}{16}}.
\end{align*}
In order to estimate the sums occurring in $A_1, A_2$, we split the sum over possible values of $h = \left(f,\frac{4m}{g}\right)^{[\frac{1}{2}]}$. Setting $f' = \frac{f}{h^2}$, we have
\begin{align*}
\sum_{f\le \left(\frac{4m}{g}\right)^{\frac{3}{16}}}\tau(f)^22^{\omega(f)}\frac{\left(f,\frac{4m}{g}\right)^{[\frac{1}{2}]}}{f} \le& \sum_{h^2\mid \frac{4m}{g}} \frac{\tau(h^2)^22^{\omega(h)}}{h} \sum_{f' \le \left(\frac{4m}{g}\right)^{\frac{3}{16}}} \frac{\tau(f')^22^{\omega(f')}}{f'}\\
\le& \left(\frac{3}{16}\right)^8\log(e^{7}m)^8\sum_{h^2\mid \frac{4m}{g}} \frac{\tau(h^2)^22^{\omega(h)}}{h},
\end{align*}
and similarly we have
\[
\sum_{f\le \left(\frac{4m}{g}\right)^{\frac{3}{16}}}\tau(f)^2\frac{\left(f,\frac{4m}{g}\right)^{[\frac{1}{2}]}}{f} \le \left(\frac{3}{16}\right)^4\log(e^{7}m)^4\sum_{h^2\mid \frac{4m}{g}} \frac{\tau(h^2)^2}{h}.
\]
The sum in $A_3$ is bounded by
\[
\sum_{f\le \left(\frac{4m}{g}\right)^{\frac{3}{16}}}\tau(f)^2f\left(\frac{6m^{\frac{1}{2}-\tau}}{5^{s_1}g}\right)^{\frac{3}{16}} \le \left(\frac{3}{16}\right)^3\log(e^{7}m)^3\frac{(4m)^{\frac{15}{32}}}{g^{\frac{9}{16}}}.
\]

Multiplying $A_1+A_2+A_3$ by $2\tau(g)2^{\omega(g)}g^{[\frac{1}{2}]}$ and summing over $g$, we see that
\begin{align*}
S \le\ &\frac{2^{23}\sqrt{6}}{3^{5}}\log(e^{7}m)^9m^{\frac{1}{2}-\tau}\sum_{gh^2\mid 4m} \frac{\tau(g)\tau(h^2)^22^{\omega(g)+\omega(h)}g^{[\frac{1}{2}]}}{hg}\\
&+\frac{2^{71}\sqrt{6}}{3^{18}}\log(e^{12}m)^8m^{\frac{1}{2}-\tau}\sum_{gh^2\mid 4m} \frac{\tau(g)\tau(h^2)^22^{\omega(g)}g^{[\frac{1}{2}]}}{hg}\\
&+\frac{2^{79}}{3^{20}}\log(e^{12}m)^6m^{\frac{1}{2}-\frac{1}{32}}\sum_{g\mid 4m}\frac{\tau(g)2^{\omega(g)}g^{[\frac{1}{2}]}}{g^{\frac{9}{16}}}.
\end{align*}
We can bound the last sum above as follows: for every prime $p$, we can find the exponent $k$ which maximizes
\[
p^{-\frac{k}{40}}\sum_{g\mid p^k}\frac{\tau(g)2^{\omega(g)}g^{[\frac{1}{2}]}}{g^{\frac{9}{16}}}.
\]
For $p > 1500000$, the maximum value of the above sum is $1$. By simply multiplying these maximum values together over all primes $p \le 1500000$, we see that
\[
\sum_{g\mid 4m}\frac{\tau(g)2^{\omega(g)}g^{[\frac{1}{2}]}}{g^{\frac{9}{16}}} < e^{13634}m^{\frac{1}{40}}.
\]
We get the bounds on $F_1, F_2$ in the same way.

Putting everything together, we see that $S$ is at most
\[
\left(e^{12}F_1(4m)\log(e^{7}m)+e^{31}F_2(4m)\right)\log(e^{12}m)^8m^{\frac{1}{2}-\tau}+e^{13667}\log(e^{12}m)^6m^{\frac{1}{2}-\frac{1}{160}}.\qedhere
\]
\end{proof}

\bigskip

\emph{Acknowledgements.} The author would like to thank Professor Soundararajan for encouragement to work on this problem, and for many useful suggestions, especially the suggestion of using a level-lowering trick to handle the divisor functions coming up in many of the sums. The author would also like to thank an anonymous referee who pointed out that the argument for odd cubic polynomials should generalize to an argument applying to general cubic polynomials.

\bibliographystyle{abbrv}
\bibliography{octahedral}

\def\cprime{$'$}
\begin{thebibliography}{10}

\bibitem{akshay}
J.~S. Ellenberg, P.~Michel, and A.~Venkatesh.
\newblock Linnik's ergodic method and the distribution of integer points on
  spheres.
\newblock In {\em Automorphic representations and {$L$}-functions}, volume~22
  of {\em Tata Inst. Fundam. Res. Stud. Math.}, pages 119--185. Tata Inst.
  Fund. Res., Mumbai, 2013.

\bibitem{mms}
S.~G. Hartke and D.~Stolee.
\newblock A linear programming approach to the {M}anickam--{M}ikl\'os--{S}inghi
  conjecture.
\newblock {\em European J. Combin.}, 36:53--70, 2014.

\bibitem{hoffstein}
J.~Hoffstein.
\newblock On the {S}iegel-{T}atuzawa theorem.
\newblock {\em Acta Arith.}, 38(2):167--174, 1980/81.

\bibitem{hua-odd}
L.-K. Hua.
\newblock On {W}aring theorems with cubic polynomial summands.
\newblock {\em Math. Ann.}, 111(1):622--628, 1935.

\bibitem{hua-waring}
L.-K. Hua.
\newblock On a generalized {W}aring problem. {II}.
\newblock {\em J. Chinese Math. Soc.}, 2:175--191, 1940.

\bibitem{hua}
L.-K. Hua.
\newblock On {W}aring's problem with cubic polynomial summands.
\newblock {\em J. Indian Math. Soc. (N.S.)}, 4:127--135, 1940.

\bibitem{jilu}
C.-G. Ji and H.-W. Lu.
\newblock Lower bound of real primitive {$L$}-function at {$s=1$}.
\newblock {\em Acta Arith.}, 111(4):405--409, 2004.

\bibitem{kadiri}
H.~{Kadiri}.
\newblock {An explicit zero-free region for the Dirichlet L-functions}.
\newblock {\em ArXiv Mathematics e-prints}, Oct. 2005.

\bibitem{landreau}
B.~Landreau.
\newblock A new proof of a theorem of van der {C}orput.
\newblock {\em Bull. London Math. Soc.}, 21(4):366--368, 1989.

\bibitem{linnik-cubes}
Y.~V. Linnik.
\newblock On the representation of large numbers as sums of seven cubes.
\newblock {\em Rec. Math. [Mat. Sbornik] N. S.}, 12(54):218--224, 1943.

\bibitem{linnik-quaternions}
Y.~V. Linnik and A.~V. Maly{\v{s}}ev.
\newblock Applications of the arithmetic of quaternions to the theory of
  ternary quadratic forms and to the decomposition of numbers into cubes.
\newblock {\em Amer. Math. Soc. Transl. (2)}, 3:91--162, 1956.

\bibitem{ramanujan-graph}
A.~Lubotzky, R.~Phillips, and P.~Sarnak.
\newblock Ramanujan graphs.
\newblock {\em Combinatorica}, 8(3):261--277, 1988.

\bibitem{mitkin}
D.~A. Mit{\cprime}kin.
\newblock Estimation of the number of terms in {W}aring's problem for
  polynomials of general type.
\newblock {\em Izv. Akad. Nauk SSSR Ser. Mat.}, 50(5):1015--1053, 1986.

\bibitem{munshi}
R.~Munshi.
\newblock Inequalities for divisor functions.
\newblock {\em Ramanujan J.}, 25(2):195--201, 2011.

\bibitem{pollock}
S.~F. Pollock.
\newblock On the extension of the principle of fermat's theorem of the
  polygonal numbers to the higher orders of series whose ultimate differences
  are constant.
\newblock In {\em Proc. Roy Soc. London}, volume~5, pages 922--924, 1843.

\bibitem{vandercorput}
J.~G. van~der Corput.
\newblock Une in\'egalit\'e relative au nombre des diviseurs.
\newblock {\em Nederl. Akad. Wetensch., Proc.}, 42:547--553, 1939.

\bibitem{venkov}
B.~A. Venkov.
\newblock {\em Elementary number theory}.
\newblock Translated from the Russian and edited by Helen Alderson.
  Wolters-Noordhoff Publishing, Groningen, 1970.

\bibitem{watson-cubes}
G.~L. Watson.
\newblock A proof of the seven cube theorem.
\newblock {\em J. London Math. Soc.}, 26:153--156, 1951.

\bibitem{watson-cubic}
G.~L. Watson.
\newblock Sums of eight values of a cubic polynomial.
\newblock {\em J. London Math. Soc.}, 27:217--224, 1952.

\bibitem{wolke}
D.~Wolke.
\newblock A new proof of a theorem of van der {C}orput.
\newblock {\em Journal of the London Mathematical Society}, s2-5(4):609--612,
  1972.

\end{thebibliography}

\end{document}